\def\grd@save@target#1{%
  \def\grd@target{#1}}
\def\grd@save@start#1{%
  \def\grd@start{#1}}
\newcommand{\KB}[1]{%
  \begin{tikzpicture}[baseline=-\dimexpr\fontdimen22\textfont2\relax]
  #1
  \end{tikzpicture}%
}
\newcommand{\KBCirc}{%
  \KB{\filldraw[color=gray, fill=none, thick] circle (0.3);}%
}
\newcommand{\KBCross}{%
  \KB{
    \draw[color=gray,thick] (-0.3,0.3) -- (0.3,-0.3);
    \draw[color=gray,thick] (-0.3,-0.3) -- (-0.05,-0.05);
    \draw[color=gray,thick] (0.05,0.05) -- (0.3,0.3);
  }%
}
\newcommand{\KBH}{%
  \KB{%
    \draw[color=gray,thick] (-0.3,0.3) .. controls (0,-0.05) .. (0.3,0.3);
    \draw[color=gray,thick] (-0.3,-0.3) .. controls (0,0.05) .. (0.3,-0.3);
  }%
}
\newcommand{\KBV}{%
  \KB{%
    \draw[color=gray,thick] (-0.3,-0.3) .. controls (0.05,0) .. (-0.3,0.3);
    \draw[color=gray,thick] (0.3,-0.3) .. controls (-0.05,0) .. (0.3,0.3);
  }%
}
\begin{document}

\makeatother

\title{Asymptotic additivity of the Turaev--Viro invariants for a family of $3$-manifolds}
\author{Sanjay Kumar and Joseph M. Melby}
\date{\vspace{-5ex}}

\newcommand{\Addresses}{{
  \bigskip
  \footnotesize

  \textsc{Department of Mathematics, University of California, Santa Barbara, Santa Barbara,  CA, 93106-6105, USA}\par\nopagebreak
  \textit{E-mail address}: \texttt{sanjay\_kumar@ucsb.edu}\\
 
  \textsc{Department of Mathematics, Michigan State University,
    East Lansing, MI, 48824, USA}\par\nopagebreak
  \textit{E-mail address}: \texttt{melbyjos@msu.edu}

}}

\maketitle

\begin{abstract}
In this paper, we show that the Turaev--Viro invariant volume conjecture posed by Chen and Yang  is preserved under gluings of toroidal boundary components for a  family of $3$-manifolds. In particular, we show that the asymptotics of the Turaev--Viro invariants are additive under certain gluings of elementary pieces arising from a construction of hyperbolic cusped $3$-manifolds due to Agol. The gluings of the elementary pieces are known to be additive with respect to the simplicial volume. This allows us to construct families of manifolds which have an arbitrary number of hyperbolic pieces and satisfy an extended version of the Turaev--Viro invariant volume conjecture.\\

\noindent \textbf{MSC (2020):}\\
57K31, 57K16, 57K14
\end{abstract}

\newtheorem{innercustomgeneric}{\customgenericname}
\providecommand{\customgenericname}{}
\newcommand{\newcustomtheorem}[2]{%
  \newenvironment{#1}[1]
  {%
   \renewcommand\customgenericname{#2}%
   \renewcommand\theinnercustomgeneric{##1}%
   \innercustomgeneric
  }
  {\endinnercustomgeneric}
}

\newcustomtheorem{customthm}{Theorem}
\newcustomtheorem{customlemma}{Lemma}
\newcustomtheorem{customprop}{Proposition}
\newcustomtheorem{customconjecture}{Conjecture}
\newcustomtheorem{customcor}{Corollary}

\theoremstyle{plain}
\newtheorem*{ack*}{Acknowledgements}
\newtheorem{thm}{Theorem}[section]
\newtheorem{lem}[thm]{Lemma}
\newtheorem{prop}[thm]{Proposition}
\newtheorem{cor}[thm]{Corollary}
\newtheorem{predefinition}[thm]{Definition}
\newtheorem{conjecture}[thm]{Conjecture}
\newtheorem{preremark}[thm]{Remark}
\newenvironment{remark}%
  {\begin{preremark}\upshape}{\end{preremark}}
 \newenvironment{definition}%
  {\begin{predefinition}\upshape}{\end{predefinition}}

\newtheorem{ex}[thm]{Example}



\section{Introduction}\label{IntroSec}
Originally constructed by Turaev and Viro \cite{TuraevViro}, the  Turaev--Viro invariants $TV_r(M;q)$ for a compact $3$-manifold $M$ are a family of invariants parameterized by an integer $r\geq 3$ dependent on a $2r$-th root of unity $q$. For this paper, we will be concerned with the $SU(2)$-version of the Turaev--Viro invariants; however, the results hold for the $SO(3)$-version with minor changes. 

In defining these two versions of the Turaev--Viro invariants, the distinction arises from the construction of the topological quantum fields theories of the Reshetikhin--Turaev invariants by Blanchet, Habegger, Masbaum, and Vogel \cite{BHMVKauffman}. In the authors' work, the elements of an index set determined by $r$ correspond to the irreducible representations of $SU(2)$. As $SU(2)$ is a double-covering of $SO(3)$, the authors remark that the $SO(3)$ theory can be obtained as the restriction of the elements of the index set to elements with corresponding representations that lift to $SO(3)$. In addition to considering  different roots of unity, this is realized as requiring that $r$ is odd for $SO(3)$ as opposed to any integer $r$ for $SU(2)$. By following the construction, the Turaev--Viro invariants can also be defined to have an $SU(2)$-version and an $SO(3)$-version. For more details between these two versions of the Turaev--Viro invariants, we refer to Sections $2$ and $3$ of \cite{colJvolDKY} by  Detcherry,  Kalfagianni,  and  Yang where the $SO(3)$ Turaev--Viro invariants are defined.

In regards to the $SU(2)$-version, we focus on a conjecture stated by Chen and Yang in \cite{chen2018volume} which relates the growth rate of the Turaev--Viro invariants for any hyperbolic manifold to this manifold’s hyperbolic volume. Also in \cite{chen2018volume}, the authors provide computational evidence supporting the conjecture.  The conjecture is given as follows. 

\begin{conjecture}[\cite{chen2018volume}, Conjecture 1.1]\label{CYvol}
 Let $TV_r(M;q)$ be the Turaev--Viro invariant of a hyperbolic $3$-manifold $M$, and let $vol(M)$ be the hyperbolic volume of $M$. For $r$ running over odd integers and $q = e^{\frac{2\pi \sqrt{-1}}{r}}$, 
\[
\lim_{r \rightarrow \infty} \frac{2\pi}{r} \log |TV_r(M;q) | = vol(M).
\]
\end{conjecture}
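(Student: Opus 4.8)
The plan is to reduce Conjecture~\ref{CYvol} to an asymptotic analysis of an explicit finite state sum, to carry out a saddle-point (Laplace) evaluation of it, and then to identify the dominant contribution with $\mathrm{vol}(M)$ via the hyperbolic geometry of $M$. Concretely, if $M$ is cusped, fix an ideal triangulation $\mathcal T$ of $M$; by the Turaev--Viro construction $TV_r(M;q)$ is a finite sum, over admissible colorings $\phi$ of the edges of $\mathcal T$ by elements of $\{0,\tfrac12,\dots,\tfrac{r-2}{2}\}$, of a product of quantum $6j$-symbols over the tetrahedra of $\mathcal T$ together with (subexponential, i.e. $O(\log r)$) vertex and edge normalization factors. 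When $M=S^3\setminus L$ is a link complement one may instead use the Detcherry--Kalfagianni--Yang formula $TV_r(M;q)=\eta_r^2\sum_{\mathbf c}\lvert J_L(\mathbf c)\rvert^2$, a sum of squared colored Jones polynomials of $L$ evaluated at $e^{4\pi\sqrt{-1}/r}$; when $M$ is closed one uses $TV_r(M;q)=\lvert RT_r(M;q)\rvert^2$ together with a surgery presentation. In every case $TV_r(M;q)=\sum_\phi W_r(\phi)$ with at most polynomially many terms in $r$.

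The next step is to pin down the exponential growth of the individual summands. Writing a coloring as $\phi=\tfrac r2\,\boldsymbol\alpha+O(1)$ with $\boldsymbol\alpha$ ranging over the compact polytope $P$ cut out by the admissibility (triangle) inequalities, the essential analytic input is that at the Chen--Yang root $q=e^{2\pi\sqrt{-1}/r}$ each building block has exponential growth governed by a hyperbolic volume: for a quantum $6j$-symbol with colors $\sim\tfrac r2\boldsymbol\alpha$ one has $\tfrac{2\pi}{r}\log\bigl\lvert\{6j\}\bigr\rvert\to \mathrm{Vol}\bigl(\Delta(\boldsymbol\alpha)\bigr)$, the volume of the truncated hyperbolic tetrahedron with dihedral angles determined by $\pi\boldsymbol\alpha$ (after Costantino, Chen--Yang, and Belletti--Detcherry--Kalfagianni--Yang, building on the Murakami--Murakami dilogarithm estimates), and the twist-region contributions to a colored Jones polynomial at $e^{4\pi\sqrt{-1}/r}$ are likewise controlled by the quantum dilogarithm (Lobachevsky function). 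Multiplying over $\mathcal T$ gives $W_r(\phi)=\exp\!\bigl(\tfrac{r}{2\pi}\Phi(\boldsymbol\alpha)+o(r)\bigr)$, where $\Phi\colon P_{\mathbb C}\to\mathbb C$ is a potential function assembled from Lobachevsky/dilogarithm terms whose real part on $P$ is, up to boundary normalizations, a sum of tetrahedron volumes.

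Then I would run Laplace's method: provided no destructive interference occurs among the terms, one obtains
\[
\lim_{r\to\infty}\frac{2\pi}{r}\log\bigl\lvert TV_r(M;q)\bigr\rvert=\max_{\boldsymbol\alpha\in P}\ \mathrm{Re}\,\Phi(\boldsymbol\alpha),
\]
and the critical-point equations $\nabla\Phi=0$ are a complexification of Thurston's gluing equations for $\mathcal T$. The complete hyperbolic structure on $M$ furnishes a distinguished solution $\boldsymbol\alpha_0$, and a volume-rigidity / angle-maximization argument (Neumann--Zagier, Rivin, Francaviglia) identifies $\mathrm{Re}\,\Phi(\boldsymbol\alpha_0)$ with $\mathrm{vol}(M)$ through the standard decomposition of $\mathrm{vol}(M)$ into ideal tetrahedron volumes. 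Thus the conjectured value is precisely the contribution of the geometric saddle, and the proof reduces to two claims: that this saddle \emph{dominates} (no other solution of the gluing equations --- non-geometric, or lying on a degenerate face of $P$, or arising from an alternative triangulation --- yields a strictly larger real potential, and $\boldsymbol\alpha_0$ lies in the region controlling the sum), and that there is \emph{no cancellation} (since the $6j$-symbols, equivalently the colored Jones values, are not positive reals, the $O(\mathrm{poly}(r))$ terms near $\boldsymbol\alpha_0$ must not destructively interfere).

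The hard part is exactly this last reduction. Saddle dominance requires a global analysis of $\Phi$ on $P_{\mathbb C}$ for which no general technique is available, and the non-cancellation is a positivity-type phenomenon understood only in special cases; together these are precisely why Conjecture~\ref{CYvol} remains open. To date it has been verified only for particular families --- the figure-eight and Borromean-rings complements (Detcherry--Kalfagianni--Yang), fundamental shadow links (Belletti--Detcherry--Kalfagianni--Yang), and certain Dehn fillings via Ohtsuki-type quantum-dilogarithm asymptotics --- where an explicit single-variable integral formula or manifest positivity renders the saddle-point analysis tractable. Consequently this paper does not resolve Conjecture~\ref{CYvol} outright; instead, assuming its conclusion (equivalently the corresponding asymptotic lower bound) for the hyperbolic elementary pieces of Agol's construction, we prove in the sections below that this lower bound is preserved under gluings along toroidal boundary components, producing infinitely many new $3$-manifolds --- with arbitrarily many hyperbolic pieces --- for which an extended form of the conjecture holds.
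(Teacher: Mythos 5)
The statement you were asked about is Chen--Yang's Conjecture~\ref{CYvol}, which the paper only \emph{states} as motivation; it supplies no proof, and none is known. Your proposal correctly recognizes this: you do not claim to prove the conjecture, you outline the standard saddle-point strategy, and you isolate the two genuine obstructions (global dominance of the geometric critical point, and absence of destructive interference among the polynomially many summands). That assessment is accurate and is consistent with how the paper treats the conjecture.

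It is worth noting how your outline relates to what the paper actually proves. The main theorem here (Theorem~\ref{mainthmrestate}) establishes the \emph{extended} version (Conjecture~\ref{GNconj}, in terms of simplicial volume, since the manifolds $M_L(k,l)$ are not hyperbolic) for a family built by gluing Agol-type hyperbolic pieces, and it does so by exactly the mechanism you flag as the crux: the invariant is written via Proposition~\ref{TVRTProp} and Theorem~\ref{RelRTshadowstatesum} as a sum of squared shadow state sums, each state sum being a product of quantum $6j$-symbols with trivial region contributions (modified gleams and Euler characteristics all vanish). The ``no cancellation'' step is carried out by hand in Lemmas~\ref{Real6jlem} and~\ref{Allm6jsignlem}, which show that for the coloring $\gamma=(n_r)$ every summand has the same sign, so a single term --- the one with all colors $n_r\sim r/2$, whose growth is exactly $v_8$ per crossing by Theorem~\ref{growthrate6jThm} and Lemma~\ref{modifiedq6jbound} --- bounds the sum from below; the matching upper bound is the BDKY bound $v_8$ per $6j$-symbol together with the polynomial count of colorings. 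So your two ``hard parts'' are resolved in this paper only because the shadow presentation makes the signs explicitly controllable and because the maximizing color is known in closed form; no gluing equations or critical-point analysis is needed. One small correction to your write-up: the paper does not ``assume the conclusion for the hyperbolic elementary pieces'' --- the lower bound for each piece comes unconditionally from the sharp $6j$-symbol asymptotics, not from an assumed instance of the conjecture.
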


As a natural extension to manifolds which are not hyperbolic, Conjecture \ref{CYvol} has been restated by Detcherry and Kalfagianni \cite{detcherry2019gromov} in terms of the simplicial volume. (See Conjecture \ref{GNconj} below.)  For more details on the simplicial volume, we refer to \cite{Gromov} by Gromov and \cite{ThurstonGT3manifolds} by Thurston. For our purposes, we will only need the relationship between the hyperbolic volume and the simplicial volume of a hyperbolic $3$-manifold $M$ given by 
$$vol(M) = v_3 \| M \|$$
where $v_3 \approx 1.0149$ is the volume of a regular ideal tetrahedron and $\|\cdot\|$ is the simplicial volume.

\begin{conjecture}[\cite{detcherry2019gromov}, Conjecture 8.1]\label{GNconj} 
Let $M$ be a  compact and orientable $3$-manifold with empty or toroidal boundary. For $r$ running over odd integers and $q = e^{\frac{2\pi \sqrt{-1}}{r}}$,
\[
\limsup_{r \rightarrow \infty} \frac{2\pi}{r} \log |\text{TV}_r(M;q)| = v_3 ||M||.
\]
\end{conjecture}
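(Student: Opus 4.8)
The plan is to attack the conjecture through the geometric decomposition of $M$, reducing a statement about an arbitrary $3$-manifold to statements about hyperbolic pieces together with a gluing principle for the Turaev--Viro invariants. It suffices to treat $M$ irreducible: since $TV_r(M_1\#M_2;q)\,TV_r(S^3;q)=TV_r(M_1;q)\,TV_r(M_2;q)$ and $TV_r(S^3;q)$ grows only polynomially in $r$, the limsup in question is additive under connected sum, which matches $\|M_1\#M_2\|=\|M_1\|+\|M_2\|$. So assume $M$ is irreducible with incompressible toroidal boundary and cut it along its JSJ tori into geometric pieces $M_1,\dots,M_k$. By Gromov additivity of the simplicial volume along incompressible tori, $\|M\|=\sum_i\|M_i\|$, and $\|M_i\|=0$ unless $M_i$ is hyperbolic, in which case $\|M_i\|=vol(M_i)/v_3$. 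The conjecture then follows from two assertions: (a) an asymptotic gluing formula
\[
\limsup_{r\to\infty}\frac{2\pi}{r}\log|TV_r(M;q)|=\sum_{i=1}^{k}\ \limsup_{r\to\infty}\frac{2\pi}{r}\log|TV_r(M_i;q)|,
\]
and (b) Conjecture~\ref{GNconj} for each geometric piece separately.

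For the non-hyperbolic geometric pieces (Seifert fibered and $\mathrm{Sol}$), which have vanishing simplicial volume, one needs $|TV_r(M_i;q)|$ to grow sub-exponentially in $r$; this should follow from the closed formulas for Reshetikhin--Turaev invariants of Seifert fibered spaces, expressed as short sums of uniformly bounded terms, since the relevant Turaev--Viro invariants are squared norms of such vectors. Granting this, only hyperbolic pieces contribute, and for a cusped hyperbolic $M_i$ assertion (b) is Conjecture~\ref{CYvol}. Here one would invoke the established cases: Detcherry, Kalfagianni, and Yang proved the volume conjecture for fundamental shadow link complements via a precise asymptotic expansion of the quantum $6j$-symbols at $q=e^{2\pi\sqrt{-1}/r}$, whose exponential rate is governed by the quantum dilogarithm evaluated at the dihedral angles of regular ideal octahedra. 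Passing to a general cusped hyperbolic piece would require realizing $M_i$ either through Agol's construction of cusped hyperbolic manifolds or as a Dehn filling of a fundamental shadow link complement, and then tracking the effect of the filling on the growth rate.

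For assertion (a), the upper bound $\le$ should come from a sub-multiplicativity estimate: write $TV_r(M;q)$ as a TQFT pairing of the relative invariants of the $M_i$ along the boundary tori and bound this pairing by the product of the norms of the boundary vectors. The reverse inequality $\ge$ carries the real content: one must show there is no destructive cancellation in the pairing, i.e.\ that the boundary vectors coming from the different pieces are ``in general position'' enough that the pairing attains the full product of the individual growth rates. This is exactly the estimate carried out in the present paper for the elementary pieces of Agol's construction under the admissible gluings, where the underlying skein-theoretic vectors can be described explicitly.

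The main obstacle is twofold, and is the reason the conjecture remains open in general. First, assertion (b) in the cusped hyperbolic case is itself known only for special families; controlling $TV_r$ under an arbitrary Dehn filling is delicate, since filling strictly decreases volume (Thurston) while one has no a priori reason the growth rate should decrease by exactly the matching amount. Second, and more seriously, the asymptotic gluing formula (a) can fail in the naive form above: for a general torus gluing the boundary vectors need not be generic --- they can pair to something of strictly smaller exponential size --- and even formulating the correct statement requires care about limsups versus genuine limits along odd $r$. Establishing the $\ge$ direction therefore needs either a structural non-cancellation principle or an explicit model of the pieces, which is why the result is obtained here for the restricted but infinite and volume-flexible family built from Agol's elementary pieces rather than for all compact orientable $3$-manifolds with toroidal boundary.
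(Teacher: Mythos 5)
The statement you have set out to prove is Conjecture~\ref{GNconj}: it is an open conjecture quoted from Detcherry--Kalfagianni, and the paper does not prove it --- it only establishes the special case of the family $\mathcal{M}$ (Theorem~\ref{mainthmrestate}). Your proposal is therefore not a proof, and you in effect concede this in your final paragraph: both pillars of your reduction are themselves open. Assertion (b) is Conjecture~\ref{CYvol} for an arbitrary cusped hyperbolic piece, which is known only for special families (fundamental shadow links, some Whitehead chains and cablings, the figure-eight complement); no argument currently passes from these to a general cusped hyperbolic $3$-manifold, and ``realizing $M_i$ as a Dehn filling of a fundamental shadow link complement and tracking the effect of the filling'' is precisely the step nobody knows how to carry out, since there is no a priori monotonicity of the growth rate under filling. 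Assertion (a), the asymptotic gluing formula along JSJ tori, is likewise unproven in the $\ge$ direction: the TQFT pairing of boundary vectors can in principle exhibit destructive cancellation, and you offer no non-cancellation principle beyond the observation that one is needed. There is also a smaller unaddressed point at the very first reduction: $\limsup$ of a product is only subadditive, so even additivity under connected sum requires knowing that the relevant limits exist (or a separate argument), which is part of what is conjectural.

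That said, your strategy is a faithful description of why the problem is hard and of where the present paper sits. What the paper actually does is choose the pieces and the gluings so that both obstacles disappear by explicit computation: the shadows of the $S$- and $A$-pieces have modified gleam and Euler characteristic zero, so the relative Reshetikhin--Turaev invariant collapses to a sum of products of quantum $6j$-symbols (Proposition~\ref{RelRTFormula}), and Lemmas~\ref{Real6jlem} and~\ref{Allm6jsignlem} verify by hand that for the coloring $\gamma=(n_r)$ every summand has the same sign --- this is the ``structural non-cancellation'' you correctly identify as the missing ingredient in general, here obtained only because the skein-theoretic data is completely explicit. If you want to contribute to the general conjecture rather than restate its difficulty, the productive targets are exactly your (a) and (b) for some new class of pieces or gluings; as written, your text should be presented as a reduction and discussion, not as a proof.
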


\begin{remark}
Conjectures \ref{CYvol} and \ref{GNconj} should be compared to the well-known volume conjecture of Kashaev \cite{KashaevVolConj} which similarly relates the growth rate of the Kashaev invariant to the hyperbolic volume of hyperbolic link complements. By a result from Murakami and Murakami \cite{MurakamiMurakamiVolConj}, the Kashaev volume conjecture is more commonly written in terms of the colored Jones polynomials. 
\end{remark}

 An important property of any compact irreducible orientable $3$-manifold is that it can be cut along a unique (up to isotopy) minimal collection of incompressible tori into atoroidal $3$-manifolds through the JSJ decomposition \cite{JacoShalen, Johannson}. Furthermore, these atoroidal manifolds are each either hyperbolic or Seifert fibered by Thurston's Geometrization Conjecture \cite{ThurstonGeomConj} which was famously completed following the work of  Perelman \cite{PerelmanEntropy, PerelmanFinite, PerelmanRicci}. For more details on the Geometrization Conjecture, we refer to Chapter 12 of Martelli's book \cite{Martelli}. For a given manifold $M$, its simplicial volume is equal to the sum of the simplicial volumes of the pieces of its JSJ decomposition. The simplicial volume is positive for each hyperbolic piece and zero for each Seifert fibered piece. We expect the same additivity relationship to hold asymptotically for the Turaev--Viro invariants as reflected in Conjecture \ref{GNconj}.

The \emph{asymptotic additivity} property of the Turaev--Viro invariants has previously been shown for a few families of manifolds. For a manifold $M$ which satisfies Conjecture \ref{GNconj}, the property was proven for so-called invertible cablings of $M$ by Detcherry and Kalfagianni \cite{detcherry2019gromov} and $(p,2)$-torus knot cablings of $M$ by Detcherry \cite{DetcherryCabling}. Each of these results involve gluing a Seifert-fibered manifold to another manifold, which does not change simplicial volume. Additionally, it was proven for the figure-eight knot cabled with Whitehead chains by Wong \cite{wong2020WHFig8}, providing examples of gluing additivity for certain pairs of hyperbolic manifolds. Our construction is the first which glues several hyperbolic pieces to produce infinite families of manifolds satisfying the asymptotic additivity property.

As our main result, the following Theorem \ref{mainthm} establishes the asymptotic additivity property for an infinite family of manifolds glued from several hyperbolic pieces.
Our construction is inspired by a construction of Agol \cite{AgolSmall} of cusped $3$-manifolds with well-understood geometric properties. Agol begins with an oriented $S^1$-bundle over a surface and systematically drills out curves to produce octahedral link complements. This procedure depends on a path on the $1$-skeleton of the pants complex of the surface. The hyperbolic building blocks for our family $\mathcal{M}$ of manifolds are obtained as follows: We begin with a trivial $S^1$-bundle over the once-punctured torus and use Agol's procedure to drill out a $2$-component link. This produces a hyperbolic manifold, which we call an $S-$piece, of volume $2v_8$, where $v_8 \approx 3.66$ is the volume of the regular ideal hyperbolic octahedron. Then we begin with a trivial $S^1$-bundle over the four-punctured sphere and use Agol's procedure to drill out a $2$-component link, producing a hyperbolic manifold of volume $4v_8$ which we call an $A-$piece. Gluing $k$ $S-$pieces and $l$ $A-$pieces along their original boundaries produces a compact manifold $M_L(k,l) \in \mathcal{M}$, where $L$ is the union of the link components of the $S-$ and $A-$pieces. For more details, see Subsection \ref{linkfamilySubSec}.

 \begin{customthm}{\ref{mainthmrestate} }\label{mainthm}
Let $M_L(k,l) \in \mathcal{M}$. Then for $r$ running over odd integers and $q = e^{\frac{2\pi \sqrt{-1}}{r}}$,
\begin{align*}
\lim_{r \rightarrow \infty} \frac{2\pi}{r} \log |TV_r(M_L(k,l);q)| = v_3 ||M_L(k,l)|| = 2(k+2l)v_8.
\end{align*} 
\end{customthm}

In general, the asymptotic additivity property is difficult to prove. In order to simplify the calculation, the family $\mathcal{M}$ was constructed to have several advantageous properties which we utilize. We first remark that the Turaev--Viro invariants can be computed from the relative Reshetikhin--Turaev invariants by a result of Belletti, Detcherry, Kalfagianni, and Yang in \cite{growth6j}.  Our family $\mathcal{M}$ can be described effectively from Turaev's shadow perspective \cite[Section 3]{TuraevShadow} which Turaev related to the relative Reshetikhin--Turaev invariants in \cite[Chapter X]{TuraevBook}. Additionally, manifolds in the family $\mathcal{M}$ have relative Reshetikhin--Turaev invariants which are comparably simple to manage as well as well-understood simplicial volumes. 

We note that the consideration of the  shadow perspective was taken from the following works. In \cite{costantinoColoredJones},  Costantino extended the colored Jones invariants to links in $S^3 \#_k \left(S^2 \times S^1\right)$ and used the formulation of the invariant to prove a version of the volume conjecture for a family of links in $S^3 \#_k \left(S^2 \times S^1\right)$ known as the \emph{fundamental shadow links}. Furthermore in \cite{growth6j},  Belletti, Detcherry, Kalfagianni, and Yang  represented the Turaev--Viro invariants in terms of the relative Reshetikhin--Turaev invariants, which they used in combination with Costantino's formulation \cite{Costantino6j2007} to show the fundamental shadow links satisfy Conjecture \ref{GNconj}. In \cite{WongYang}, Wong and Yang also use this shadow viewpoint to study a version of the volume conjecture involving the relative Reshetikhin--Turaev invariants. In this paper, we utilize the same approach to prove Theorem \ref{mainthm}; however, we note that the form of the relative Reshetikhin--Turaev invariants  we study here is more complicated.

Moreover, the Turaev--Viro invariants are related to a measure of complexity of a manifold called the \textit{shadow complexity} derived from Turaev's shadow perspective for $3$-manifolds. We refer to Costantino and Thurston \cite{CostantinoThurston} or Turaev \cite{TuraevBook} for more details. The shadow complexity $c \in \mathbb{N}$ of a manifold gives a sharp upper bound for the growth rate of its Turaev--Viro invariants as stated in the following.

\begin{cor}[\cite{growth6j}, Corollary 3.11]
If $M$ has shadow complexity $c$, then
\begin{align*}
    lTV(M) \leq LTV(M) \leq 2c v_8,
\end{align*}
where $\displaystyle lTV(M) = \liminf_{r \rightarrow \infty} \frac{2\pi}{r} \log |TV_r(M;q)|$ and $\displaystyle LTV(M) = \limsup_{r \rightarrow \infty} \frac{2\pi}{r} \log |TV_r(M;q)|$. Furthermore, we have equalities for fundamental shadow links.
\end{cor}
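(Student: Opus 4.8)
The plan is to combine three ingredients: the shadow state-sum description of the relative Reshetikhin--Turaev invariants, the formula of Belletti, Detcherry, Kalfagianni, and Yang expressing $TV_r(M;q)$ in terms of these relative invariants, and the uniform asymptotic upper bound on quantum $6j$-symbols. First I would fix a shadow $X$ of $M$ realizing the shadow complexity, so that $X$ has exactly $c$ true vertices, each carrying a quantum $6j$-symbol. Turaev's construction then writes the associated relative Reshetikhin--Turaev invariant as a state sum over admissible $r$-colorings of the regions of $X$: each true vertex contributes a $6j$-symbol, each edge a quantum-integer weight, and each region a weight governed by its Euler characteristic, while the colors on the boundary are held fixed. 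Applying the BDKY identity, $TV_r(M;q)$ is, up to a factor polynomial in $r$, a sum over boundary colorings of the squared modulus of this state sum.

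For the upper bound I would estimate the state sum term by term. The number of admissible colorings of the regions and of the boundary is bounded by a polynomial in $r$, so it suffices to bound a single term. The edge weights, region weights, and the global normalization are products of quantum integers (or their inverses) and therefore grow at most polynomially in $r$; the only source of exponential growth is the product of the $c$ quantum $6j$-symbols, one per true vertex. The key analytic input --- which is precisely the main estimate of \cite{growth6j} --- is that $\frac{2\pi}{r}\log|6j\text{-symbol}| \leq v_8 + o(1)$ uniformly over all admissible colorings. Multiplying over the $c$ vertices gives growth rate at most $c\, v_8$ for the relative Reshetikhin--Turaev invariant, and squaring as forced by the BDKY identity yields $LTV(M) \leq 2 c\, v_8$. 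The inequality $lTV(M) \leq LTV(M)$ is immediate from $\liminf \leq \limsup$.

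For the equality in the case of fundamental shadow links I would use Costantino's explicit product formula for the relative Reshetikhin--Turaev invariant of such a link, whose shadow consists of exactly $c$ true vertices with all regions disks. Evaluating at the ``extremal'' coloring in which every color is taken close to $\frac{r-1}{2}$, the asymptotic expansion of each $6j$-symbol has leading exponential rate exactly $v_8$, and a saddle-point analysis shows that the leading terms at the distinct vertices do not cancel; hence this single term of the state sum realizes growth rate $c\, v_8$, giving $lTV(M) \geq 2 c\, v_8$. Combined with the upper bound, this forces $lTV(M) = LTV(M) = 2 c\, v_8$.

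The main obstacle is the uniform upper bound on the $6j$-symbol: one must control $|6j\text{-symbol}|$ not only for generic colorings, where a steepest-descent analysis applies cleanly, but also for colorings lying on or near the boundary of the admissibility polytope, where the integral representation degenerates. Handling these degenerate regimes, together with verifying that the sum over boundary colorings and the normalization contribute nothing exponential, is the technical heart of the argument and is what \cite{growth6j} supplies.
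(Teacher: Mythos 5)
This corollary is cited from \cite{growth6j} and is not reproved in the paper you are reading, so there is no internal proof to compare against; the paper uses the statement only to contextualize its own theorem. Your outline nevertheless reconstructs the argument from \cite{growth6j} correctly in its main lines: the identity $TV_r(M;q)=\sum_\gamma|RT_r(N,L,\gamma)|^2$, Turaev's shadow state sum for $RT_r$, the observation that the number of admissible colorings and all edge, region, and normalization weights grow at most polynomially in $r$, and the uniform estimate $\frac{2\pi}{r}\log|\cdot|\leq v_8+O\left(\frac{\log r}{r}\right)$ on each of the $c$ quantum $6j$-symbols attached to the $c$ true vertices of a minimal shadow. Taking the product over vertices and squaring, as the BDKY identity forces, yields $LTV(M)\leq 2cv_8$, and $lTV(M)\leq LTV(M)$ is immediate. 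This half of the argument is sound.

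The soft spot is the lower bound for fundamental shadow links. You appeal to ``a saddle-point analysis'' to rule out cancellation between the leading asymptotics at distinct vertices, but that is not how the argument goes in \cite{growth6j} or in Costantino's earlier work: for a fundamental shadow link the relative Reshetikhin--Turaev invariant at the extremal link coloring is controlled directly through the admissibility structure of the FSL shadow, so that the growth rate $c\,v_8$ is realized by an explicit product of extremal $6j$-symbols, with no analytic non-cancellation estimate required. The distinction is not cosmetic. Once one leaves the FSL class, non-cancellation among surviving surface colorings really can fail and must be proved, and in the present paper that is exactly the role of Lemmas \ref{Real6jlem} and \ref{Allm6jsignlem}: the manifolds $M_L(k,l)\in\mathcal{M}$ are not fundamental shadow links, several admissible surface colorings contribute, and the authors establish sign-constancy of the summands by hand before the lower bound in Lemma \ref{techlem} can go through. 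To make your sketch rigorous, replace the saddle-point appeal either by the FSL-specific admissibility argument or by a sign-constancy lemma in the spirit of Lemmas \ref{Real6jlem}--\ref{Allm6jsignlem}.
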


In a similar way, the manifolds $M_L(k,l)$ have a shadow complexity based on the elementary pieces used in their construction such that they satisfy the same equalities as the fundamental shadow links as shown in Theorem \ref{mainthm}. In terms of  the shadow construction  described in Subsection \ref{Shadowsubsec},  the shadow complexity is the number of the shadow's vertices  $c=k+2l$.

The paper is organized as follows: We recall Agol's construction of cusped $3$-manifolds and introduce the family of manifolds $\mathcal{M}$ in Section \ref{AgolandLinksSec}. In Section \ref{TVfromShadowSec}, we introduce Turaev's shadow invariant and discuss its relationship with the relative Reshetikhin--Turaev and Turaev--Viro invariants. We include definitions of the relative Reshetikhin--Turaev and Turaev--Viro invariants in Section \ref{TVfromShadowSec} in order to provide context for the interplay between the invariants, but we stress that these precise definitions are not needed to understand the proof of Theorem \ref{mainthm}. This proof comprises Section \ref{mainThmProofSec}, and lastly, Section \ref{FurtherDirections} consists of further directions for this project.

\begin{ack*}
The authors would like to thank their advisor Efstratia Kalfagianni for guidance and helpful discussions. In addition, the authors would like to thank Renaud Detcherry and Tian Yang for their comments and suggestions on this paper. Finally, the authors would like to thank the referee for their time and careful reading, as well as providing valuable suggestions to improve the overall exposition of the paper. Part of this research was supported in the form of graduate Research Assistantships by NSF Grants DMS-1708249 and DMS-2004155. For the second author, this research was partially supported by a Herbert T. Graham Scholarship through the Department of Mathematics at Michigan State University. 
\end{ack*}


\section{Link family}\label{AgolandLinksSec}
In this section, we will construct the link family $\mathcal{M}$. We begin by recalling a construction of Agol \cite{AgolSmall} in Subsection \ref{AgolConstrSubSec}. In Subsection \ref{linkfamilySubSec}, we use Agol's algorithm to construct the family of link complements $\mathcal{M}$.

\subsection{Agol's construction of cusped $3$-manifolds}\label{AgolConstrSubSec}
Agol \cite{AgolSmall} introduced a method which uses the pants complex of a surface and links in bundles over that surface to construct compact manifolds with well-understood geometric characteristics. We outline the construction here.

\begin{definition}
Let $\Sigma_{g,n}$ be a connected  compact  orientable surface of genus $g$ with $n$ boundary components and Euler characteristic $\chi(\Sigma_{g,n})= 2(1-g)-n$. We denote the closed surface of genus $g$ by $\Sigma_g$. For $\chi(\Sigma_{g,n})<0$, a \emph{pants decomposition} is a maximal collection of distinct smoothly embedded simple closed curves on $\Sigma_{g,n}$ which have trivial intersection pairwise. A pants decomposition $\{\alpha_1, \dots,\alpha_N\}$ consists of $N = 3(g-1)+n$ curves, and cutting $\Sigma_{g,n}$ along these curves produces $-\chi(\Sigma_{g,n})$ pairs of pants $\Sigma_{0,3}$.
\end{definition}

We note that the pants decompositions of a given surface are not unique.
 
\begin{definition}
Two pants decompositions $P = \{\alpha_1, \dots,\alpha_N\}$ and $P' = \{\alpha_1', \dots,\alpha_N'\}$ of a surface $\Sigma_{g,n}$ are said to \textit{differ by an elementary move} if $P'$ can be obtained from $P$ by replacing one curve $\alpha_i$ with another curve $\alpha_i'$ such that $\alpha_i$ and $\alpha_i'$ intersect \textit{minimally} in one of the following ways:
\begin{itemize}
    \item If $\alpha_i$ lies on a $\Sigma_{1,1}$ in the complement of the other curves in $P$, then $\alpha_i$ is on a single pair of pants and $\alpha_i$ and $\alpha_i'$ must intersect exactly once.
    \item If $\alpha_i$ lies on a $\Sigma_{0,4}$ in the complement of the other curves in $P$, then $\alpha_i$ is the boundary between two pairs of pants and $\alpha_i$ and $\alpha_i'$ must intersect exactly twice.
\end{itemize}
We call a curve switch on $\Sigma_{1,1}$ a \textit{simple move}, or $S-$move, and a curve switch on $\Sigma_{0,4}$ an \textit{associativity move}, or $A-$move. Examples of the elementary moves are given in Figure \ref{fig:AS}.
\end{definition}

\begin{figure}[!htb]
    \centering
    \begin{subfigure}[b]{0.42\textwidth}
        \includegraphics[width=\textwidth]{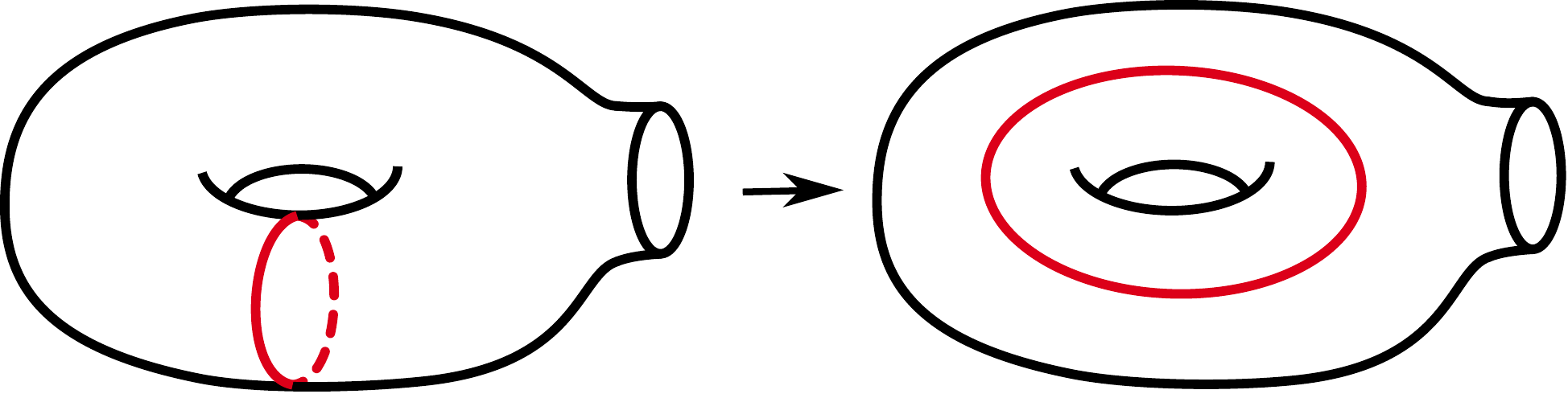}
        \caption{An example of an $S-$move.}
        \label{fig:Smove}
	\end{subfigure}
	\qquad
	\begin{subfigure}[b]{0.42\textwidth}
        \includegraphics[width=\textwidth]{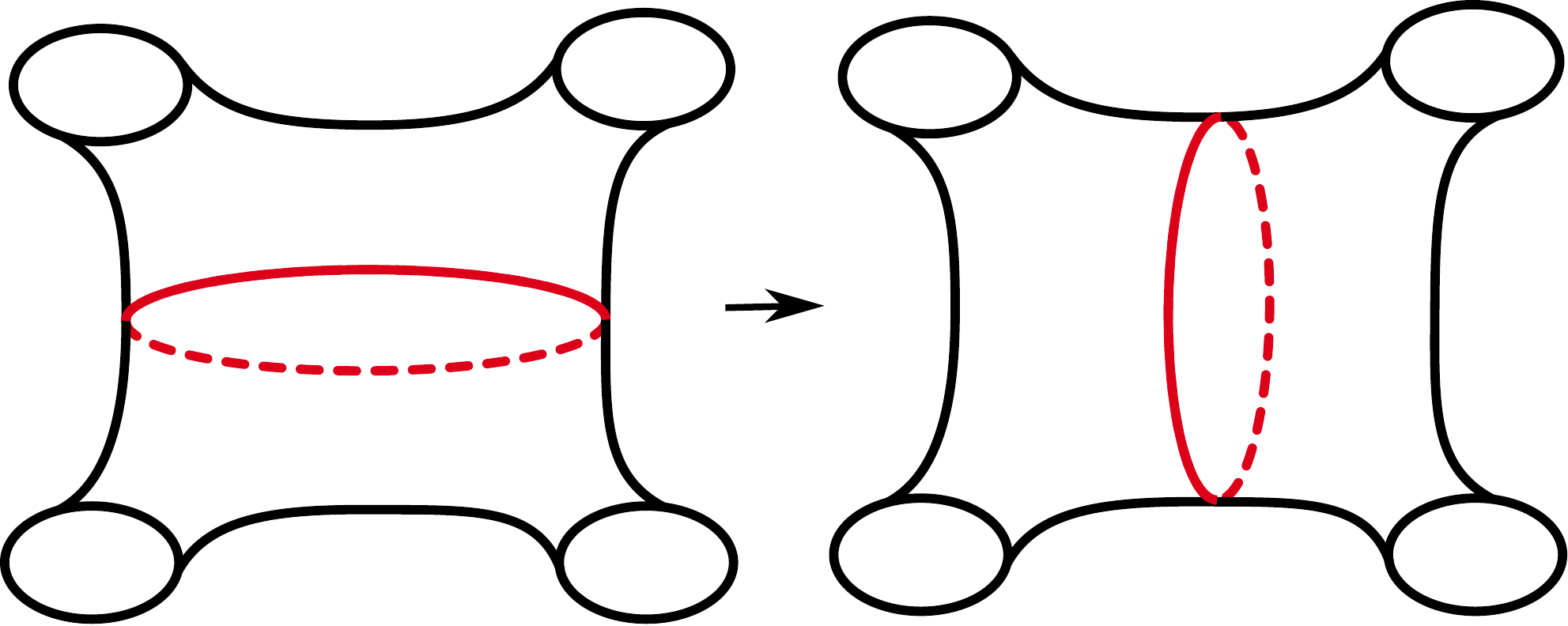}
        \caption{An example of an $A-$move.}
        \label{fig:Amove}
	\end{subfigure}
	\caption{Examples of the elementary moves.}
    \label{fig:AS}
\end{figure}

\begin{definition}
\cite{pantsHLS}
The \emph{pants decomposition graph} $\mathcal{P}(\Sigma_{g,n})^{(1)}$ of $\Sigma_{g,n}$ is the graph with vertices corresponding to isotopy classes of pants decompositions of $\Sigma_{g,n}$ and edges corresponding to pairs of isotopy classes which differ by a single elementary move.
\end{definition}

The following theorem, originally stated at the end of \cite{HATCHERThurstonPants}, is proven by Hatcher, Lochak, and Schneps in Theorem 2 of \cite{pantsHLS}.

\begin{thm}[\cite{pantsHLS}, Theorem 2] 
Let $g,n \in \mathbb{N}\cup\{0\}$ and let $\Sigma_{g,n}$ be a connected compact orientable surface with $\chi(\Sigma_{g,n}) <0$. Then the pants decomposition graph $\mathcal{P}(\Sigma_{g,n})^{(1)}$ is connected.
\end{thm}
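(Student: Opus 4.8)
The plan is to argue by induction on the \emph{complexity} $\xi(\Sigma_{g,n}) := N = 3(g-1)+n$, the number of curves in a pants decomposition. For the base case $\xi=1$ the surface is $\Sigma_{1,1}$ or $\Sigma_{0,4}$; a pants decomposition is a single essential non-peripheral simple closed curve, and two such curves differ by an elementary move precisely when their geometric intersection number is minimal among non-isotopic curves (one for the $S$-move on $\Sigma_{1,1}$, two for the $A$-move on $\Sigma_{0,4}$). Identifying isotopy classes of essential non-peripheral curves with slopes in $\mathbb{Q}\cup\{\infty\}$, the resulting graph is in both cases the Farey graph, since $i(p/q,r/s)=|ps-qr|$ on $\Sigma_{1,1}$ and $2|ps-qr|$ on $\Sigma_{0,4}$. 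The connectedness of the Farey graph (a continued-fraction / Euclidean-algorithm argument) settles the base case.

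For the inductive step I would isolate two sub-statements. First, a \emph{cutting lemma}: if two pants decompositions $P$ and $P'$ of $\Sigma_{g,n}$ both contain a curve $\gamma$, then they are joined by a path in $\mathcal{P}(\Sigma_{g,n})^{(1)}$. This holds because restricting $\mathcal{P}(\Sigma_{g,n})^{(1)}$ to the decompositions containing $\gamma$, with edges the elementary moves not involving $\gamma$, yields a graph isomorphic to $\mathcal{P}(\Sigma')^{(1)}$, where $\Sigma'$ is the (possibly disconnected) surface obtained by cutting $\Sigma_{g,n}$ along $\gamma$; each component of $\Sigma'$ has strictly smaller complexity (cutting along any curve drops $N$ by one), so the induction hypothesis applies, and every elementary move of $\Sigma'$ lifts to one of $\Sigma_{g,n}$ supported in the corresponding complexity-one subsurface. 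Second, a \emph{reduction lemma}: any pants decomposition $P$ of $\Sigma_{g,n}$ is joined by elementary moves to one containing a prescribed essential non-peripheral curve $\gamma$. Granting both, the theorem follows: given $P$ and $P'$, pick $\gamma\in P'$, use the reduction lemma to join $P$ to some $P''$ containing $\gamma$, and then the cutting lemma to join $P''$ to $P'$.

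It remains to prove the reduction lemma, by induction on the total intersection number $i(\gamma,P)=\sum_{i=1}^{N} i(\gamma,\alpha_i)$, with $\gamma$ and $P$ in minimal position. If $i(\gamma,P)=0$ then $\gamma$ lies in a single pair of pants of $\Sigma_{g,n}\setminus P$, and being essential and non-peripheral in $\Sigma_{g,n}$ it must be isotopic to some $\alpha_i$, so we are done. If $i(\gamma,P)>0$, choose $\alpha_i$ with $i(\gamma,\alpha_i)>0$ and let $Y$ be the component of $\Sigma_{g,n}\setminus\bigcup_{j\neq i}\alpha_j$ containing $\alpha_i$; then $Y$ is a $\Sigma_{1,1}$ or a $\Sigma_{0,4}$, $\alpha_i$ is an essential non-peripheral curve in $Y$, and $\gamma\cap Y$ is a union of essential arcs. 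Performing a surgery on $\alpha_i$ inside $Y$ — a single Farey step in $Y$ toward the arcs of $\gamma\cap Y$ — produces a curve $\alpha_i'$ with $i(\alpha_i,\alpha_i')\in\{1,2\}$, so that $P'=(P\setminus\{\alpha_i\})\cup\{\alpha_i'\}$ differs from $P$ by an elementary move, and with $i(\gamma\cap Y,\alpha_i')<i(\gamma\cap Y,\alpha_i)$; since $\alpha_i'$ is disjoint from the other $\alpha_j$, this gives $i(\gamma,P')<i(\gamma,P)$ and completes the induction.

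The main obstacle is exactly this surgery step inside $Y$: one must check that the replacement curve can be chosen to be a legitimate $S$- or $A$-move away from $\alpha_i$ while \emph{strictly} decreasing the total intersection with $\gamma$, which requires careful control of minimal position and of the finitely many isotopy types of essential arcs in a pair of pants. As an alternative organization, one can bypass the reduction lemma by invoking connectedness of the curve complex $\mathcal{C}(\Sigma_{g,n})$ for $\xi\geq 2$: take a chain of curves $\gamma_0,\dots,\gamma_m$ with $\gamma_0\in P$, $\gamma_m\in P'$, and $\gamma_j\cap\gamma_{j+1}=\emptyset$, extend each pair $\{\gamma_j,\gamma_{j+1}\}$ to a pants decomposition, and splice the resulting decompositions together with the cutting lemma; this merely relocates the surgery argument into the proof that $\mathcal{C}(\Sigma_{g,n})$ is connected. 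Alternatively, following Hatcher, Lochak and Schneps, one may deduce the statement from the Hatcher--Thurston theorem that the complex of cut systems is connected.
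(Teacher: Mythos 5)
The paper does not prove this theorem; it is quoted directly from Hatcher, Lochak, and Schneps \cite{pantsHLS} (where it appears as Theorem~2), with the remark that the statement was first asserted, without proof, at the end of Hatcher--Thurston \cite{HATCHERThurstonPants}. So there is no proof in the paper to compare against. Your outline is, however, consistent with the standard arguments in the literature: Farey-graph connectivity in complexity one, a cutting lemma handled by induction on complexity, and a reduction lemma proved by induction on geometric intersection number, and the logic by which the two lemmas together give connectivity is sound.

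The genuine gap is the one you flag yourself, the surgery step inside $Y$, and it is more delicate than the phrase ``a single Farey step toward the arcs of $\gamma\cap Y$'' suggests. First, $\gamma\cap Y$ need not consist of arcs at all: if $\gamma\subset Y$ it is a closed curve, though then Farey connectivity of $\mathcal{P}(Y)^{(1)}$ finishes that case immediately, so this is only an omission of a case. Second, and more seriously, the arcs of $\gamma\cap Y$ need not all be isotopic (already $\Sigma_{1,1}$ supports three pairwise disjoint, pairwise non-isotopic essential arcs, forming an ideal triangulation, and $\Sigma_{0,4}$ likewise supports several), so there is no single ``Farey direction toward the arcs.'' Since $\alpha_i'$ must still be disjoint from all $\alpha_j$, $j\neq i$, the only contribution to $i(\gamma,P')$ comes from $i(\gamma\cap Y,\alpha_i')$, and one must produce a Farey neighbour of $\alpha_i$ in $Y$ that \emph{strictly} decreases the total intersection with this whole (possibly mixed-slope) arc system; this is true but requires a genuine innermost-arc surgery argument that the proposal does not carry out. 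Your alternative route through connectivity of the curve complex relocates, rather than removes, this surgery argument, as you correctly note; citing Hatcher--Thurston's connectivity of the cut-system complex, as Hatcher--Lochak--Schneps do, is the one option that genuinely outsources it.
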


\begin{definition}
For a given homeomorphism $f:\Sigma_{g,n} \to \Sigma_{g,n}$, define the associated \emph{mapping torus} by $T_f = (\Sigma_{g,n} \times [0,1]) / ((x,0) \sim (f(x),1))$.
\end{definition}

In \cite{AgolSmall}, Agol constructed cusped $3$-manifolds from the mapping torus $T_f$ and a path $P$ on the pants decomposition graph $\mathcal{P}(\Sigma_{g,n})^{(1)}$. We outline the construction as follows:

\begin{itemize}
    \item Let $f: \Sigma_{g,n} \to \Sigma_{g,n}$ be a homeomorphism and $P= \{P_i\}_{i=0}^m$ be a path such that each $P_i$ is a vertex of the pants decomposition graph, each $P_i$ and $P_{i+1}$ are connected by an edge, and $P_m= f(P_0)$.
    
    \item For $i\in \{1,\dots,m\}$, let $\beta_i$ correspond to the simple closed curve in  $P_i$ obtained from performing a single elementary move on a simple closed curve in $P_{i-1}$. We assume there exists no curve $\beta_j$ that is contained in all the pants decompositions $P_i$.

    \item Let $B= \{B_i\}_{i=1}^m$ be the link in $T_f$ such that $B_i = \beta_i \times \{\frac{i}{m}\}$ is a link component, and we define the cusped $3$-manifold $M_P$ to be the complement of the link $B$ in $T_f$.
\end{itemize}
Agol proves the following lemma in \cite{AgolSmall}.

\begin{lem} [\cite{AgolSmall}, Lemma 2.3 and Corollary 2.4]\label{LemAgolVol} 
Let $M_P$ be the cusped $3$-manifold obtained from Agol's construction for a homeomorphism $f: \Sigma_{g,n} \to \Sigma_{g,n}$ and a path $P$ on $\mathcal{P}(\Sigma_{g,n})^{(1)}$. Then $M_P$ has a complete hyperbolic metric such that $vol(M_P) = (|S|+2|A|)v_8$ where $vol(M_P)$ is the hyperbolic volume, $|S|$ and $|A|$ are the number of $S-$ and $A-$moves in $P$, respectively, and $v_8 \approx 3.66$ is the volume of a regular ideal hyperbolic octahedron. 
\end{lem}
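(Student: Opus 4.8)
The plan is to realize $M_P$ explicitly as a gluing of regular ideal hyperbolic octahedra --- one octahedron for each $S$-move and two octahedra for each $A$-move along $P$ --- and then to invoke Mostow--Prasad rigidity to pin down the volume. First I would cut $T_f\setminus B$ along level surfaces $\Sigma_{g,n}\times\{t_i\}$, with each $t_i$ chosen strictly between consecutive drilling levels $i/m$. This exhibits $M_P$ as a cyclic gluing of $m$ blocks $E_1,\dots,E_m$ along copies of $\Sigma_{g,n}$, where $E_i$ is a regular neighborhood of the $i$-th elementary move together with the drilled component $B_i$. Since $P_{i-1}$ and $P_i$ agree on all but one curve, the part of $E_i$ lying over the pants unaffected by the move is a product carrying no essential geometry, and the nontrivial part of $E_i$ is the complement of a curve in $\Sigma_{1,1}\times I$ for an $S$-move, or in $\Sigma_{0,4}\times I$ for an $A$-move, with the relevant pants curves recorded on the two ends.

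Next I would give, by an explicit face-pairing, an ideal polyhedral decomposition of each elementary block: an $S$-block into one ideal octahedron and an $A$-block into two, each octahedron combinatorially regular so that all of its dihedral angles equal $\pi/2$. This is the local model underlying the entire construction, and checking it is in the same spirit as the classical octahedral decompositions of the Whitehead link complement (volume $v_8$) and the Borromean rings complement (volume $2v_8$).

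I would then assemble the blockwise decompositions into a single ideal polyhedral decomposition of $M_P$ and verify Thurston's gluing and completeness equations for the regular ideal octahedron structure. Because every dihedral angle is $\pi/2$, this reduces to checking that exactly four octahedron-edges meet around each edge of the complex and that each cusp cross-section inherits a Euclidean structure; the hypothesis that no curve $\beta_j$ survives in all the $P_i$ is exactly what is needed here to rule out an embedded essential torus or annulus. This is the step I expect to be the main obstacle: confirming that the naive ``all octahedra regular'' assembly really is the complete finite-volume hyperbolic structure on $M_P$, rather than merely a topological decomposition.

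Finally, Mostow--Prasad rigidity makes this complete hyperbolic structure unique, so $vol(M_P)$ equals the total volume of the octahedral decomposition, which is $(|S|+2|A|)\,v_8$. Since a regular ideal octahedron has maximal volume among all ideal octahedra, no volume is lost when the pieces are straightened, which is why one obtains the exact value in the lemma and not merely an upper bound.
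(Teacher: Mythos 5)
The paper does not prove this lemma; it is cited wholesale from Agol (Lemma 2.3 and Corollary 2.4 of \cite{AgolSmall}), so there is no in-paper proof to compare against. Judged on its own terms, your proposal identifies the right high-level strategy --- an explicit regular ideal octahedral decomposition followed by rigidity, which is indeed the spirit of Agol's argument and also matches the shadow/Costantino--Thurston picture the present paper invokes (one octahedron per shadow vertex, i.e.\ one per crossing, giving $|S|+2|A|$ octahedra). But two parts of your sketch would not survive an attempt to make them rigorous.

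First, the blockwise decomposition along level surfaces $\Sigma_{g,n}\times\{t_i\}$ does not behave the way you need. Each block $E_i$ contains a single drilled curve $B_i$ and has two copies of $\Sigma_{g,n}$ as free boundary; it is not a finite-volume hyperbolic manifold and does not admit a decomposition into whole regular ideal octahedra ``internally.'' The octahedra in the genuine decomposition of $M_P$ straddle the level surfaces, so you cannot first octahedralize each $E_i$ independently and then glue; the combinatorics has to be set up globally (or at least across pairs of adjacent moves) before any angle conditions can even be posed. This is exactly why the present paper's $S$- and $A$-pieces carry \emph{two} drilled curves each (with $P_0=P_2$): only then is the piece closed up into a cusped hyperbolic manifold ($2v_8$, resp.\ $4v_8$) that one can reason about in isolation. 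You also conflate the role of the ``no $\beta_j$ in every $P_i$'' hypothesis: it is the topological condition needed to rule out an essential torus so that a complete hyperbolic structure can exist at all, not a local condition that enters when you verify edge and cusp equations for a chosen polyhedral decomposition.

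Second, the closing remark is a misconception: once you have exhibited $M_P$ as a gluing of \emph{geometric} regular ideal octahedra satisfying Thurston's edge and completeness equations, Mostow--Prasad rigidity already gives $vol(M_P)=(|S|+2|A|)v_8$ exactly; there is no separate ``straightening'' step and nothing to compare. The fact that the regular ideal octahedron maximizes volume among ideal octahedra is what underlies Gromov-norm/simplicial-volume \emph{upper bounds} (as in the $LTV(M)\le 2c\,v_8$ estimates elsewhere in the paper), and invoking it here signals that you are implicitly mixing the rigidity argument with a straightening argument; in a correct write-up you would need one or the other, not a hybrid.
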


\subsection{Manifold construction}\label{linkfamilySubSec}
Here we will discuss a family of links with octahedral complements in $S^1$-bundles over connected closed orientable surfaces. 

Let $\Sigma_{g}$ be a connected closed orientable surface of genus $g$ constructed by gluing $k$ copies of $\Sigma_{1,1}$ and $l$ copies of $\Sigma_{0,4}$  along their boundary components. We glue each pair of $S^1$ boundary components via identity maps. Since $\Sigma_g$ has no boundary components, $k$ must be even. 

Consider the closed orientable $3$-manifold $T_{id} = \Sigma_{g} \times S^1$. Note each gluing circle of $\Sigma_g$ corresponds to a torus in $T_{id}$. This manifold can be decomposed into elementary pieces by cutting along these tori so that the resulting pieces are trivial $S^1$-bundles over $k$ copies of  $\Sigma_{1,1}$ and $l$ copies of $\Sigma_{0,4}$. We perform a pair of $S-$moves in each copy of $\Sigma_{1,1}$ with $P_0 = P_2$ on the pants complex of $\Sigma_{1,1}$ to produce a two-component link $L_S$. By Lemma \ref{LemAgolVol}, the complement of $L_S$ in $\Sigma_{1,1} \times S^1$ has a complete hyperbolic metric with hyperbolic volume $2v_8$. We call this complement an \textit{$S-$piece}. Similarly, we perform a pair of $A-$moves in each copy of $\Sigma_{0,4}$ with $P_0 = P_2$ to produce a two-component link $L_A$. By Lemma \ref{LemAgolVol}, the complement of $L_A$ in $\Sigma_{0,4} \times S^1$ has a complete hyperbolic metric with hyperbolic volume $4v_8$. We call this complement an \textit{$A-$piece}.

Let $L = \bigsqcup_{i=1}^k L_S^i \cup \bigsqcup_{j=1}^l L_A^j$ be the union of these two component links in $\Sigma_{g} \times S^1$. We denote the $(2k+2l)$-component link complement $\left(\Sigma_{g} \times S^1\right) \backslash L$ by $M_L(k,l)$. We remark that $M_L(k,l)$ is not hyperbolic since the gluing procedure produces essential tori. However, by Lemma \ref{LemAgolVol}, each $S-$piece and $A-$piece of $M_L(k,l)$ contributes $2v_8$ and $4v_8$ to the simplicial volume, respectively, so $v_3 \|M_L(k,l)\| = 2(k+2l)v_8$. Two examples of manifolds of type $(2,2)$ are given in Figure \ref{fig: ManifoldEx}. Figure \ref{fig: SplitManifoldEx} gives a decomposition of each example into their respective $S-$ and $A-$pieces.

\begin{figure}[!htb]
    \centering
    \begin{subfigure}[b]{0.32\textwidth}
	    \includegraphics[width=\textwidth]{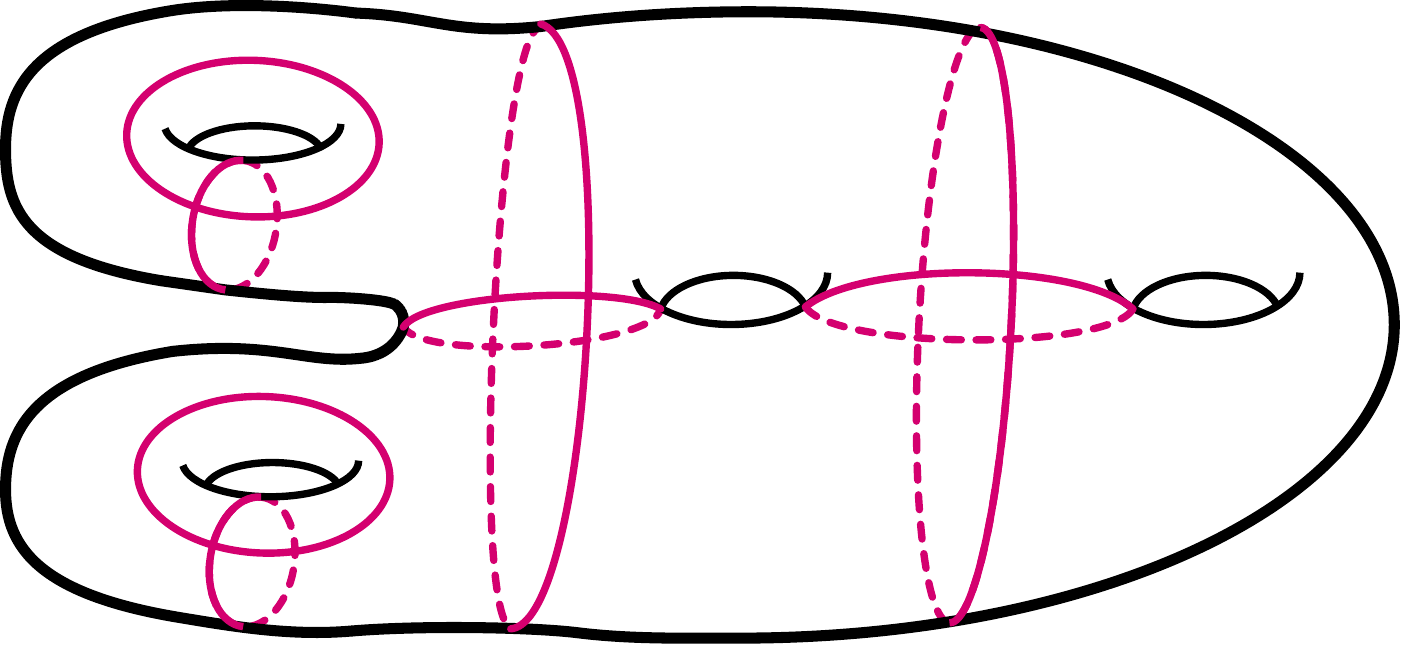}
	\end{subfigure}
	\qquad \qquad
	\begin{subfigure}[b]{0.32\textwidth}
	    \includegraphics[width=\textwidth]{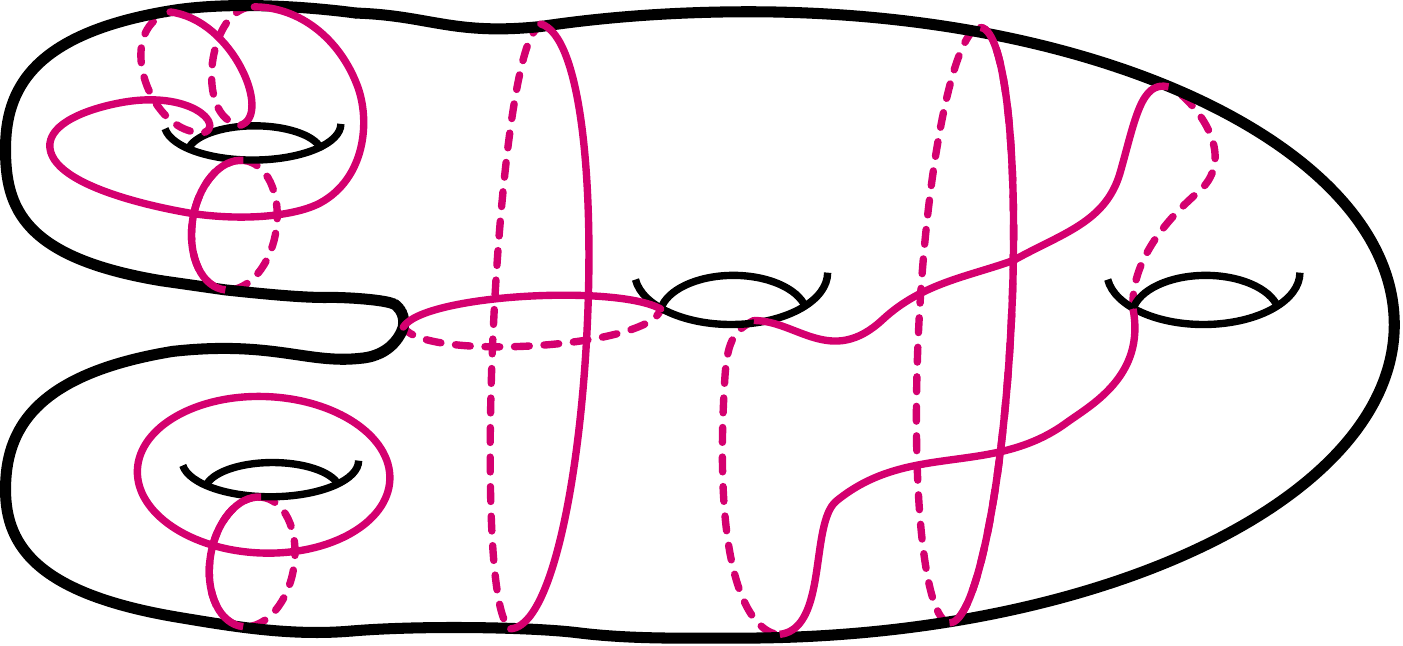}
	\end{subfigure}
    \caption{The projections of $M_L(2,2)$ and $M_{L'}(2,2)$ onto the base surface $\Sigma_4$, where $L$ and $L'$ are 8-component links in $\Sigma_4 \times S^1$.}
    \label{fig: ManifoldEx}
\end{figure}

\begin{figure}[!htb]
    \centering
    \begin{subfigure}[b]{0.8\textwidth}
    	\centering
    	\includegraphics[width=0.9\textwidth]{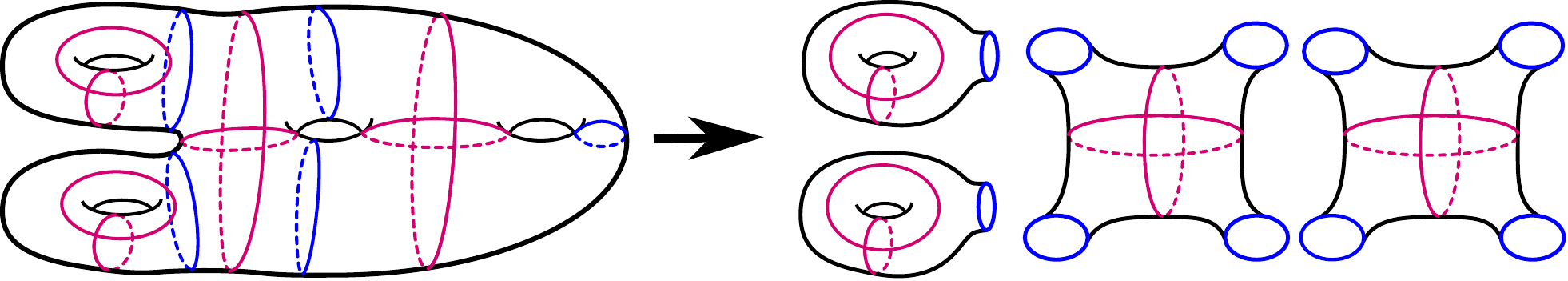}\\
    	\vspace{0.25cm}
    \end{subfigure}
    \begin{subfigure}[b]{0.8\textwidth}
    	\centering
    	\includegraphics[width=0.9\textwidth]{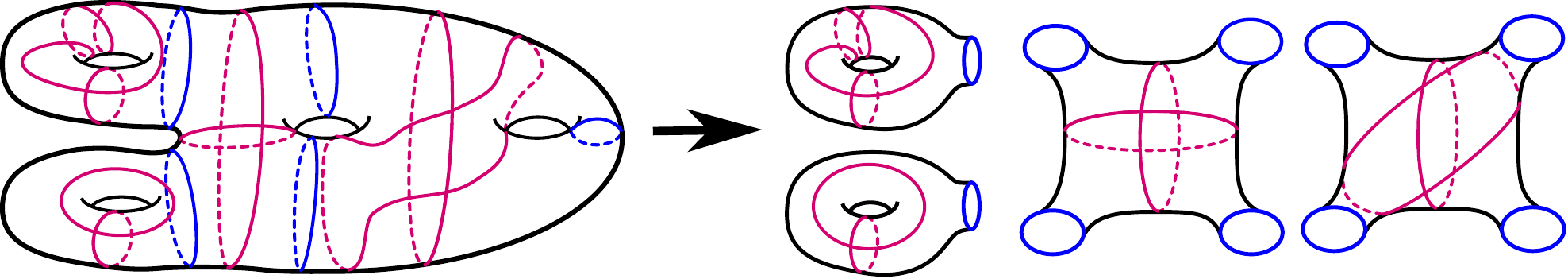}
    \end{subfigure}
    \caption{From cutting $\Sigma_4$ along the blue curves which lift to essential tori in $M_L(2,2)$ and $M_L'(2,2)$, we obtain two $S-$pieces and two $A-$pieces.}
    \label{fig: SplitManifoldEx}
\end{figure}

Let $\mathcal{M} = \{M_L(k,l) \: | \: L \subset \Sigma_g \times S^1,\; g \geq 2, \; k,l \in \mathbb{N}, \; k \text{ even} \}$ be the family of compact orientable $3$-manifolds constructed from $k$ $S-$pieces and $l$ $A-$pieces. In Section \ref{mainThmProofSec}, we will prove Theorem \ref{mainthm} for manifolds in this infinite family.

\begin{remark}
Note that since we only require that $P_0=P_2$ in the construction of the $S-$ and $A-$pieces, we can take $P_0$ to be an arbitrary vertex on the pants decomposition graph which gives rise to infinitely many choices for $P_1$. This implies that we also have infinitely many choices for the elementary pieces used in the construction of $M_L(k,l)$. That being said, because they are hyperbolic,  Corollary 6.6.2 by Thurston \cite{ThurstonGT3manifolds} implies that there are at most finitely many elementary pieces up to homeomorphism.
\end{remark}


\section{The Turaev--Viro invariants from the perspective of shadows}\label{TVfromShadowSec}

In this section, we introduce Turaev's shadow theory of $3$-manifolds. We begin with an introduction to the quantum 6j-symbols and some of their properties in Subsection \ref{Q6jSubSec}. In Subsection \ref{Shadowsubsec}, we introduce Turaev's shadow state sum invariant for links in $S^1$-bundles over surfaces and briefly discuss how Turaev's shadow construction can be generalized to all $3$-manifolds. In Subsection \ref{RelRTsubsec}, we introduce the relative Reshetikhin--Turaev invariants and relate them to the shadow state sum invariant. We then introduce the Turaev--Viro invariants and discuss the results of Belletti, Detcherry, Kalfagianni, and Yang \cite{growth6j} relating the them to the relative Reshetikhin--Turaev invariants.

For the rest of this paper, let $r\geq 3$ be an odd integer and $q = e^{\frac{2\pi \sqrt{-1}}{r}}$. Define the \textit{quantum integer} by $[n] = \frac{q^n-q^{-n}}{q-q^{-1}}$ and the quantum factorial by
\[
[n]! = \prod_{k=1}^n [k]
\]
where $k$ is a positive integer. By convention, we also define $[0]!=1$.
Finally, let $I_r = \{0,1,2,\dots,r-2\}$. Throughout the paper, we use the convention that $\sqrt{y} = \sqrt{|y|}\sqrt{-1}$ for any negative real number $y$.

\subsection{Quantum 6j symbols}\label{Q6jSubSec}
We first introduce the quantum $6j$-symbols. Deeper algebraic and geometric properties of the quantum $6j$-symbols can be found in Kirillov and Reshetikhin \cite{KirillovReshetikhin6j}, Turaev and Viro \cite{TuraevViro}, and Turaev \cite{TuraevShadow,TuraevBook}. Note that other than the use of $[n]$ for the quantum integer rather than $\{n\} : = q^n - q^{-n}$, the definitions introduced in this section largely follow the conventions and notation of \cite{growth6j}. In particular, our coloring set $I_r$ is defined in terms of integers rather than the conventionally chosen half-integers used in \cite{chen2018volume, TuraevViro}.

\begin{definition}\label{rAdmissible}
A triple $(a_1,a_2,a_3)$ of integers in $I_r$ is \textit{$r$-admissible} if 
\begin{enumerate}[(i)]
    \item $a_1+a_2+a_3$ is even,
    \item $a_1+a_2+a_3 \leq 2(r-2)$,
    \item $a_i+a_j-a_k \geq 0$ for any $i,j,k \in \{1,2,3\}$.
\end{enumerate}
We say a 6-tuple $(a_1,\dots,a_6)$ is \textit{$r$-admissible} if the triples $(a_1,a_2,a_3)$, $(a_1,a_5,a_6)$, $(a_2,a_4,a_6)$, and $(a_3,a_4,a_5)$ are $r$-admissible.
\end{definition}
For an $r$-admissible triple $(a_1,a_2,a_3)$, define 
\[
\Delta (a_1,a_2,a_3) = \sqrt{\frac{\left[\frac{a_1+a_2-a_3}{2}\right]! \left[\frac{a_1+a_3-a_2}{2}\right]! \left[\frac{a_2+a_3-a_1}{2}\right]!}{\left[\frac{a_1+a_2+a_3}{2} +1 \right]!}}
.\]

\begin{definition}\label{q6jdef}
The \textit{quantum 6j-symbol} of an $r$-admissible 6-tuple $(a_1,\dots,a_6)$ is the complex number
\begin{align}\label{q6j}
    \left| \begin{array}{ccc}
    a_1 & a_2 & a_3 \\
    a_4 & a_5 & a_6
    \end{array} \right| =& \sqrt{-1}^{-\sum_{i=1}^6 a_i} \Delta(a_1,a_2,a_3)\Delta(a_1,a_5,a_6)
    \Delta(a_2,a_4,a_6)\Delta(a_3,a_4,a_5)  \nonumber\\
    & \sum_{k = \text{max}\{T_i\}}^{\text{min}\{Q_j\}} \frac{(-1)^k [k+1]!}{\prod_{i=1}^4 \left[k-T_i\right]! \prod_{j=1}^3 \left[Q_j-k\right]!} \in \mathbb{C},
\end{align}
where $T_1 = \frac{a_1+a_2+a_3}{2}$, $T_2 = \frac{a_1+a_5+a_6}{2}$, $T_3 = \frac{a_2+a_4+a_6}{2}$, $T_4 = \frac{a_3+a_4+a_5}{2}$, $Q_1 = \frac{a_1+a_2+a_4+a_5}{2}$, $Q_2 = \frac{a_1+a_3+a_4+a_6}{2}$, and $Q_3 = \frac{a_2+a_3+a_5+a_6}{2}$.
\end{definition}
We remark that the value of the quantum $6j$-symbol is either real or purely imaginary. 

We now recall some properties of the quantum $6j$-symbol at $q = e^{\frac{2\pi \sqrt{-1}}{r}}$. 
For an $r$-admissible $6$-tuple $(i,j,k,l,m,n)$, the symmetries 
\begin{align}\label{q6jSymmetries}
     \left| \begin{array}{ccc}
    i & j & k \\
    l & m & n
    \end{array} \right| = 
    \left| \begin{array}{ccc}
    j & i & k \\
    m & l & n
    \end{array} \right| = 
     \left| \begin{array}{ccc}
    i & k & j \\
    l & n & m
    \end{array} \right| =  
    \left| \begin{array}{ccc}
    i & m & n \\
    l & j & k
    \end{array} \right| = 
     \left| \begin{array}{ccc}
    l & m & k \\
    i & j & n
    \end{array} \right| = 
     \left| \begin{array}{ccc}
    l & j & n \\
    i & m & k
    \end{array} \right|
\end{align}
follow immediately from the definition of the quantum $6j$-symbol.

Belletti, Detcherry, Kalfagianni, and Yang \cite{growth6j} give an upper bound for the growth rate of the quantum $6j$-symbol, which we state in the following theorem. Related results on these growth rates are also due to Costantino \cite{Costantino6j2007}.

\begin{thm}[\cite{growth6j}, Theorem 1.2 and Lemma 3.13]\label{growthrate6jThm}
For any odd $r\geq 3$ and any $r$-admissible 6-tuple $(a_1,\dots,a_6)$, 
\begin{align}\label{upperbound6j}
    \frac{2\pi}{r} \log \left|\left| \begin{array}{ccc}
    a_1 & a_2 & a_3 \\
    a_4 & a_5 & a_6
    \end{array} \right|_{q = e^{\frac{2\pi \sqrt{-1}}{r}}} \right| \leq v_8 + O\left(\frac{\log(r)}{r}\right). 
\end{align}
Moreover, this bound is sharp. If the sign is chosen such that $\frac{r \pm 1}{2}$ is even, then
\begin{align}\label{sharp6j}
    \frac{2\pi}{r} \log \left|\left| \begin{array}{ccc}
    \frac{r \pm 1}{2} & \frac{r \pm 1}{2} & \frac{r \pm 1}{2} \\
    \frac{r \pm 1}{2} & \frac{r \pm 1}{2} & \frac{r \pm 1}{2}
    \end{array} \right|_{q = e^{\frac{2\pi \sqrt{-1}}{r}}} \right| = v_8 +O\left(\frac{\log(r)}{r}\right). 
\end{align}
\end{thm}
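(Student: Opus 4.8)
\medskip
\noindent\textbf{Proof proposal.}
The plan is to reduce everything to the asymptotics of quantum factorials, written through the Lobachevsky function $\Lambda(\theta)=-\int_0^\theta\log|2\sin t|\,dt$, and then to obtain the upper bound from a finite maximization whose value is $v_8=8\Lambda(\tfrac{\pi}{4})$. The basic input is that, since $[n]=\sin(2\pi n/r)/\sin(2\pi/r)$,
\[
\log|[n]!|=\sum_{k=1}^{n}\log\bigl|2\sin\tfrac{2\pi k}{r}\bigr|-n\log\bigl|2\sin\tfrac{2\pi}{r}\bigr|,
\]
and comparing the first sum with $\tfrac{r}{2\pi}\int_0^{2\pi n/r}\log|2\sin t|\,dt$ — the discrepancy being $O(\log r)$, coming from the integrable logarithmic singularities of $\log|2\sin t|$ at multiples of $\pi$ — gives the uniform estimate $\tfrac{2\pi}{r}\log|[n]!|=-\Lambda(\tfrac{2\pi n}{r})+n\,\varepsilon_r+O(\tfrac{\log r}{r})$ for $0\le n\le r-1$, where $\varepsilon_r=-\tfrac{2\pi}{r}\log|2\sin\tfrac{2\pi}{r}|=O(\tfrac{\log r}{r})$. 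For $r$-admissible data the factorials occurring in a nonzero term of \eqref{q6j} all have argument at most $r-1$, so no wrap-around occurs; and from $\sum_{i=1}^4T_i=\sum_{j=1}^3Q_j=\sum_{i=1}^6a_i$ one checks that, after assembling the four $\Delta$'s and the summand of \eqref{q6j}, the net coefficient of $\varepsilon_r$ equals $-1$, so that bookkeeping term is absorbed into $O(\tfrac{\log r}{r})$.

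For the upper bound I would take moduli in \eqref{q6j}, bound the alternating sum by its number of terms (at most $r$) times its largest term, and substitute the factorial estimate. This turns $\tfrac{2\pi}{r}\log$ of the $6j$-symbol into
\[
\tfrac12\!\!\sum_{\text{four triples }(a,b,c)}\!\!\Bigl(\Lambda(\tfrac{2\pi w}{r})-\Lambda(\tfrac{2\pi x}{r})-\Lambda(\tfrac{2\pi y}{r})-\Lambda(\tfrac{2\pi z}{r})\Bigr)+\max_{k}\Bigl({-}\Lambda(\tfrac{2\pi(k+1)}{r})+\sum_{i=1}^4\Lambda(\tfrac{2\pi(k-T_i)}{r})+\sum_{j=1}^3\Lambda(\tfrac{2\pi(Q_j-k)}{r})\Bigr)
\]
up to $O(\tfrac{\log r}{r})$, where for each triple $w$ denotes the half-sum and $x,y,z$ the half-differences of its three entries. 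It then remains to show that the supremum of this functional over all $r$-admissible $6$-tuples $(a_1,\dots,a_6)$ and all admissible indices $k$ equals $v_8$. This is the crux. I would establish it by recognizing the functional as a (possibly degenerate) volume of a hyperbolic truncated tetrahedron whose dihedral-angle data are read off from the rescaled colors, for which the classical fact that the regular ideal hyperbolic octahedron maximizes volume yields the value $v_8$ — the supremum being approached as all $a_i/r\to\tfrac12$. Alternatively one can argue directly using that $\Lambda$ is odd, $\pi$-periodic, bounded, and satisfies $\Lambda(\theta+\tfrac{\pi}{2})=\tfrac12\Lambda(2\theta)-\Lambda(\theta)$, together with a critical-point analysis on the admissibility polytope.

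For the sharpness statement I would specialize to $a_1=\cdots=a_6=a$ with $a=\tfrac{r\pm1}{2}$, choosing the sign for which $\tfrac{r\pm1}{2}$ is even; this is exactly what $r$-admissibility of the constant $6$-tuple demands, since then $3a$ is even and, for large $r$, the triangle inequalities and $3a\le 2(r-2)$ hold. Here $T_i=\tfrac{3a}{2}$ and $Q_j=2a$, so the sum in \eqref{q6j} is $\sum_k(-1)^k[k+1]!\,\bigl([k-\tfrac{3a}{2}]!\bigr)^{-4}\bigl([2a-k]!\bigr)^{-3}$. The key point is the absence of cancellation: since $[m]>0$ for $1\le m\le\tfrac{r-1}{2}$, $[m]<0$ for $\tfrac{r+1}{2}\le m\le r-1$, and $[r]=0$, the sign of $[m]!$ is an explicit power of $-1$, and a short computation shows that $(-1)^k[k+1]!$ has the same sign for every $k$ in the summation range. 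Hence the modulus of the sum is at least that of any single term; taking the term with $k$ nearest $\tfrac{7r}{8}$ and applying the factorial estimate — whose exponent simplifies, via the identity above, to $8\Lambda(u)-2\Lambda(2u)$ with $u=\tfrac{2\pi}{r}\bigl(k-\tfrac{3a}{2}\bigr)$, a function maximized at $u=\tfrac{\pi}{4}$ with value $8\Lambda(\tfrac{\pi}{4})=v_8$ — gives $\tfrac{2\pi}{r}\log|6j|\ge v_8-O(\tfrac{\log r}{r})$. Combined with the upper bound, this is \eqref{sharp6j}.

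The step I expect to be the main obstacle is the maximization in the second paragraph. The upper-bound reduction is soft and the no-cancellation argument is a finite sign computation, but identifying the supremum of the Lobachevsky-type functional with $v_8$ requires either the hyperbolic-geometry input (maximality of the regular ideal octahedron) or a somewhat delicate analysis of the critical points of an explicit $\Lambda$-expression on a polytope.
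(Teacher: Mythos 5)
The paper does not prove this theorem; it is a direct citation of Belletti--Detcherry--Kalfagianni--Yang (Theorem 1.2 for the unconditional upper bound, Lemma 3.13 for the sharp lower bound), so there is no in-paper argument to compare against. That said, your outline follows the strategy of the cited reference, and your sharpness computation checks out: with $a_1=\cdots=a_6=a=\tfrac{r\pm1}{2}$ chosen even one has $T_i=\tfrac{3a}{2}$, $Q_j=2a$, the only sign-varying factor in $S_k$ is $(-1)^k[k+1]!$ (since $[2a-k]!$ and $[k-\tfrac{3a}{2}]!$ remain positive over the summation range), and that sign is independent of $k$; setting $u=\tfrac{2\pi}{r}(k-\tfrac{3a}{2})$, the exponent of a single term together with the $\Delta$-prefactor does reduce, via the duplication identity, to $8\Lambda(u)-2\Lambda(2u)$, which attains its maximum $8\Lambda(\tfrac{\pi}{4})=v_8$ at $u=\tfrac{\pi}{4}$.

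The genuine incompleteness, which you flag yourself, is the maximization in the upper-bound step, and your first suggested route does not work as stated: reading the rescaled colors as dihedral angles of a hyperideal truncated tetrahedron requires the around-vertex conditions $\alpha_i+\alpha_j+\alpha_k\le\pi$ recorded in Remark \ref{hyperideal}, and these are \emph{not} implied by $r$-admissibility alone. So the ``classical fact that the regular ideal octahedron maximizes volume'' applies only to a proper subpolytope and cannot by itself produce the unconditional inequality for every admissible $6$-tuple. The second route you mention --- a direct critical-point analysis of the Lobachevsky expression over the full admissibility polytope, using oddness, $\pi$-periodicity, and the duplication formula for $\Lambda$ --- is what actually closes the argument and is the substantive content of the cited Theorem 1.2. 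As written, your proposal correctly identifies that step but does not carry it out, so it is a sound skeleton of the proof rather than a complete one.
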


The authors of \cite{growth6j} also prove the following result of Costantino \cite{Costantino6j2007} for the root $q = e^{\frac{2\pi \sqrt{-1}}{r}}$. Let the summand of Equation (\ref{q6j}) be given by 
\[
S_k = \frac{(-1)^k [k+1]!}{\prod_{i=1}^4 \left[k-T_i\right]! \prod_{j=1}^3 \left[Q_j-k\right]!}
\]
where $k \in \{\max{\{T_i\}},\dots, \min{\{Q_j\}} \}$ for $i=1,2,3,4$ and $j=1,2,3$.

\begin{thm}[\cite{growth6j}, Theorem A.1]\label{growthtrunctetra}
Let $(a_1^{(r)},\dots, a_6^{(r)} )$ be a sequence of  admissible 6-tuples such that 
\begin{enumerate}[(a)]
\item $0 \leq Q_j-T_i \leq \frac{r-2}{2}$ for $i=1,2,3,4$ and $j=1,2,3$, and 
\item $\frac{r-2}{2}\leq T_i \leq r-2$ for $i=1,2,3,4$. 
\end{enumerate}
Let $\theta_i = \lim_{r\rightarrow \infty} \frac{2\pi a_i^{(r)}}{r}$ and let $\alpha_i = |\pi-\theta_i|$. Then 
\begin{enumerate}[(1)]
    \item for each $r$, the sign of $S_k$ is independent of the choice of k, for $k \in \{\max{\{T_i\}},\dots, \min{\{Q_j\}} \}$, 
    \item $\alpha_1,\dots, \alpha_6$ are the dihedral angles of an ideal or a hyperideal hyperbolic tetrahedron $\Delta$, see Remark \ref{hyperideal}, and 
    \item as $r$ runs over the odd integers
    \begin{align}\label{trunctetra}
        \lim_{r\rightarrow\infty} \frac{2\pi}{r} \log \left|\left| \begin{array}{ccc}
        a_1^{(r)} & a_2^{(r)} & a_3^{(r)}\\
        a_4^{(r)} & a_5^{(r)} & a_6^{(r)} 
        \end{array} \right|_{q = e^{\frac{2\pi \sqrt{-1}}{r}}} \right| = vol(\Delta)
    \end{align}
\end{enumerate}
\end{thm}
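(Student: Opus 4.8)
The plan is to prove the three assertions in turn, with essentially all the difficulty concentrated in (3). Throughout we use that at $q = e^{2\pi\sqrt{-1}/r}$ one has $[n] = \sin(2\pi n/r)/\sin(2\pi/r)$, so that $[n]>0$ when $2\pi n/r \in (0,\pi)\pmod{2\pi}$ and $[n]<0$ when $2\pi n/r \in (\pi,2\pi)\pmod{2\pi}$, and that $\log\big|[n]!\big| = -\tfrac{r}{2\pi}\,\Lambda\!\big(\tfrac{2\pi n}{r}\big) + O(\log r)$ uniformly for $0\le n\le r-1$, where $\Lambda(\theta) = -\int_0^\theta \log|2\sin t|\,dt$ is the Lobachevsky function; this is the standard Riemann-sum estimate for $\sum_{m=1}^n \log\big|2\sin(2\pi m/r)\big|$. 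Since the prefactor $\sqrt{-1}^{-\sum a_i}$ in Equation \eqref{q6j} is a unit, it suffices to analyze $\big|\prod \Delta\big|$ and $\big|\sum_k S_k\big|$.

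For (1), the consecutive ratio of summands is
\[
\frac{S_{k+1}}{S_k} \;=\; -\,[k+2]\;\frac{\prod_{j=1}^3 [Q_j-k]}{\prod_{i=1}^4 [k+1-T_i]}.
\]
Condition (a) gives $0< Q_j-k \le \tfrac{r-2}{2}$ and $0< k+1-T_i \le \tfrac{r-2}{2}$ for every $i,j$ and every consecutive pair of nonzero terms, so those quantum integers are positive; condition (b) gives $k\ge \max\{T_i\}\ge \tfrac{r-2}{2}$, while non-vanishing of $S_{k+1}$ forces $k+2\le r-1$ (otherwise $[k+2]!$ contains the vanishing factor $[r]$), so $\tfrac{r}{2}< k+2< r$ and hence $[k+2]<0$. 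Therefore $S_{k+1}/S_k>0$ for all consecutive nonzero terms, so all $S_k$ share a common sign and $\big|\sum_k S_k\big| = \sum_k |S_k|$, with no cancellation. For (2), letting $r\to\infty$ turns (a) and (b) into linear inequalities on the limiting angles $\theta_i$, which rewritten through $\alpha_i = |\pi-\theta_i|$ are precisely the conditions characterizing the six dihedral angles of a hyperideal hyperbolic tetrahedron (positivity of each $\alpha_i$, the angle constraint at each of the four vertices, and the signature condition on the associated Gram matrix); an equality in a vertex constraint makes that vertex ideal, and if all four hold one obtains a genuine ideal tetrahedron. This yields a well-defined $\Delta$ with $vol(\Delta)<\infty$.

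For (3), by (1) and the fact that the sum has at most $r$ terms, $\tfrac{2\pi}{r}\log\big|\sum_k S_k\big| = \tfrac{2\pi}{r}\log\big(\max_k |S_k|\big) + O\!\left(\tfrac{\log r}{r}\right)$. Substituting the factorial estimate into $\log|S_k|$ and, via $|\Delta(a,b,c)|^2 = \big[\tfrac{a+b-c}{2}\big]!\,\big[\tfrac{a+c-b}{2}\big]!\,\big[\tfrac{b+c-a}{2}\big]!\big/\big[\tfrac{a+b+c}{2}+1\big]!$, into $\log|\Delta(\cdot)|$, one finds that $\tfrac{2\pi}{r}\log$ of the quantum $6j$-symbol converges to $\max_t V(t)$, where $V(t)$ is an explicit finite combination of values of $\Lambda$ in the limiting angles and in the continuous parameter $t = \lim_{r\to\infty}\tfrac{2\pi k}{r}$, maximized over the interval cut out by (a)--(b). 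The equation $V'(t)=0$ reduces, after simplification, to the relation among the edge invariants of $\Delta$ imposed by its dihedral angles, and the extremal value $\max_t V(t)$ equals $vol(\Delta)$; this is the content of Costantino's volume computation in \cite{Costantino6j2007} (equivalently, a matching with the Murakami--Yano volume formula for a hyperideal tetrahedron, confirmed through the Schl\"afli variation formula), and it establishes \eqref{trunctetra}.

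The main obstacle lies in (3): one must control the error in the Riemann-sum approximation of the quantum factorials uniformly over all admissible $k$ and all $6$-tuples obeying (a)--(b) --- in particular near the endpoints of the summation range, where arguments of quantum integers approach $0$ or $r$ and $\log|2\sin(\cdot)|$ is singular, so the naive error bound degenerates --- and then one must identify the extremal combination of Lobachevsky functions with the hyperbolic volume of the (possibly hyperideal) tetrahedron $\Delta$. Reproducing this identification in the present conventions (the root $q = e^{2\pi\sqrt{-1}/r}$ rather than $e^{\pi\sqrt{-1}/r}$, hence the integer coloring set $I_r$ in place of half-integers) amounts to tracking factors of $2$ inside the arguments of $\Lambda$, but it must be carried out carefully so as to land exactly on $vol(\Delta)$.
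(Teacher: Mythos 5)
This theorem is quoted in the paper from \cite{growth6j} (Theorem~A.1 there, in turn built on Costantino \cite{Costantino6j2007}); the paper gives no proof of it, so there is no in-paper argument to compare against. Your sketch does follow the route taken in those references, and part~(1) is essentially a complete, correct proof: the ratio $S_{k+1}/S_k = -[k+2]\prod_j[Q_j-k]/\prod_i[k+1-T_i]$ is indeed positive for consecutive nonvanishing terms, since conditions (a)--(b) place the arguments of $[Q_j-k]$ and $[k+1-T_i]$ in $(0,\tfrac{r-2}{2}]\subset(0,\tfrac r2)$ while forcing $\tfrac r2 < k+2 < r$ whenever $S_{k+1}\neq 0$.

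However, (2) and (3) remain genuine gaps, not merely stylistic compression. For (2), you assert that the limiting inequalities from (a)--(b), rewritten in terms of $\alpha_i = |\pi-\theta_i|$, ``are precisely'' the hyperideal-tetrahedron angle conditions (vertex sums $\le\pi$ plus the Gram-matrix signature condition). This translation is nontrivial because of the absolute value: one must argue which $\theta_i$ fall above or below $\pi$ and check that the resulting sign pattern is compatible with all twelve inequalities from (a) simultaneously; this is exactly where the published proof does careful case analysis, and you have not supplied it. For (3), you correctly identify the two hard points --- uniform control of the Euler--Maclaurin error where arguments of quantum integers degenerate toward $0$ or $r$, and the identification $\max_t V(t)=\mathrm{vol}(\Delta)$ via the variational/Schl\"afli argument --- but you explicitly defer both, which means the core of the theorem is unproved in your write-up. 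Also, your stated asymptotic $\log|[n]!| = -\tfrac{r}{2\pi}\Lambda(\tfrac{2\pi n}{r}) + O(\log r)$ is not correct as written: the Riemann-sum estimate gives $\sum_{m\le n}\log|2\sin(2\pi m/r)| = -\tfrac{r}{2\pi}\Lambda(\tfrac{2\pi n}{r}) + O(\log r)$, whereas $\log|[n]!|$ carries an additional $-n\log(4\pi/r)$ term of size $O(r\log r)$. This does not damage the final limit because the total degree of the $6j$-symbol in $(q-q^{-1})^{-1}$ is $-1$ so the $n$-linear terms telescope, but as stated the estimate is false and the cancellation must be observed explicitly before the Lobachevsky reduction is legitimate.
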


\begin{remark}\label{hyperideal}
We refer to \cite{BaoBonahon} for details on hyperideal hyperbolic tetrahedra. In particular, the numbers $\alpha_1,\dots, \alpha_6$ correspond to the dihedral angles of an ideal or hyperideal hyperbolic tetrahedron if and only if around each vertex, $\alpha_i+\alpha_j+ \alpha_k \leq \pi$ for $i,j,k \in \{1,\dots, 6\}$. 
\end{remark}

The even integers can be written as the two sets $\{ \frac{r+1}{2} \: | \: r \equiv 3 \mod 4\}$ and $\{ \frac{r-1}{2} \: | \: r \equiv 1 \mod 4\}$ corresponding to the subsequences that achieve the sharp upper bound of Theorem \ref{growthrate6jThm} Equation (\ref{sharp6j}). Another such pair of subsequences is $\left( \frac{r-3}{2}\right)$ and $\left( \frac{r-1}{2}\right)$. The following is analogous to Lemma 3.13 of \cite{growth6j}.

\begin{lem}\label{modifiedq6jbound}
If the sign is chosen such that $\frac{r -2 \pm 1}{2}$ is even, then
\begin{align}\label{modifiedsharp6j}
    \frac{2\pi}{r} \log \left|\left| \begin{array}{ccc}
    \frac{r -2 \pm 1}{2} & \frac{r -2 \pm 1}{2} & \frac{r -2 \pm 1}{2} \\
    \frac{r -2 \pm 1}{2} & \frac{r -2 \pm 1}{2} & \frac{r -2 \pm 1}{2}
    \end{array} \right|_{q = e^{\frac{2\pi \sqrt{-1}}{r}}} \right| = v_8 +O\left(\frac{\log(r)}{r}\right). 
\end{align}
\end{lem}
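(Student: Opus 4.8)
The plan is to mimic the proof of Lemma 3.13 of \cite{growth6j} (equivalently, the ``moreover'' part of Theorem \ref{growthrate6jThm}), with the minor modification that the common entry is $\frac{r-2\pm1}{2}$ rather than $\frac{r\pm1}{2}$. First I would check $r$-admissibility of the $6$-tuple all of whose entries equal $a := \frac{r-2\pm1}{2}$: conditions (i) and (iii) of Definition \ref{rAdmissible} are immediate (the sum of any triple is $3a$, which is even precisely because the sign has been chosen to make $a$ even, and $a_i+a_j-a_k = a \geq 0$), while condition (ii) requires $3a \leq 2(r-2)$, i.e. $\tfrac{3}{2}(r-2\pm1) \leq 2(r-2)$, which holds for all sufficiently large odd $r$. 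Hence the quantum $6j$-symbol in \eqref{modifiedsharp6j} is defined for large $r$.

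Next I would observe that this symmetric $6$-tuple satisfies hypotheses (a) and (b) of Theorem \ref{growthtrunctetra}: with all $a_i = a$ we get $T_i = \tfrac{3a}{2}$ and $Q_j = 2a$, so $Q_j - T_i = \tfrac{a}{2} = \tfrac{r-2\pm1}{4}$, which lies in $[0,\tfrac{r-2}{2}]$ for large $r$, and $T_i = \tfrac{3a}{2} = \tfrac{3(r-2\pm1)}{4}$, which lies in $[\tfrac{r-2}{2}, r-2]$ for large $r$. Therefore Theorem \ref{growthtrunctetra} applies: the signs of the summands $S_k$ do not cancel, and the limiting value of $\frac{2\pi}{r}\log$ of the symbol equals $\mathrm{vol}(\Delta)$, where $\Delta$ is the ideal-or-hyperideal tetrahedron with all six dihedral angles equal to $\alpha := |\pi - \theta|$ with $\theta = \lim_{r\to\infty}\frac{2\pi a}{r} = \pi$ (since $\frac{2\pi}{r}\cdot\frac{r-2\pm1}{2} = \pi + O(1/r)$). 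Thus $\alpha = 0$ for all six edges, so $\Delta$ is the regular ideal hyperbolic octahedron's building block --- more precisely the tetrahedron with all dihedral angles $0$ is the regular ideal tetrahedron whose volume in the Bao--Bonahon normalization of Theorem \ref{growthtrunctetra} is $v_8$; this is exactly the content of the sharpness statement \eqref{sharp6j}, since $\frac{r-2\pm1}{2}$ and $\frac{r\pm1}{2}$ have the same asymptotic $\frac{2\pi}{r}\cdot(\text{entry}) \to \pi$. Combining, the limit in \eqref{modifiedsharp6j} equals $v_8$, and the error term $O\!\left(\frac{\log r}{r}\right)$ is extracted exactly as in the proof of \eqref{sharp6j} in \cite{growth6j} by tracking the quantum factorials via the estimate on $\log[n]!$ used there.

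The only genuine obstacle is making sure the error term is uniform of the stated order $O\!\left(\frac{\log r}{r}\right)$ rather than merely $o(1)$: Theorem \ref{growthtrunctetra} as quoted gives only the bare limit, so for the sharp statement with error term I would instead replicate the explicit asymptotic analysis behind Lemma 3.13 of \cite{growth6j}, namely bound the quantum factorials $[n]!$ appearing in $\Delta(a,a,a)$ and in the dominant term $S_k$ using the standard estimate $\frac{2\pi}{r}\log\left|[n]!\right| = -\frac{r}{2\pi}\left(\Lambda\!\left(\tfrac{2\pi(n+1)}{r}\right) - \Lambda\!\left(\tfrac{2\pi}{r}\right)\right) + O\!\left(\frac{\log r}{r}\right)$ in terms of the Lobachevsky function $\Lambda$, plug in $n$ proportional to $\tfrac{r-2\pm1}{2}$, and check that the resulting combination of $\Lambda$-values equals $v_8$ up to $O\!\left(\frac{\log r}{r}\right)$. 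Since the entries differ from those in \eqref{sharp6j} by a bounded additive constant ($-2$ inside, negligible after dividing by $r$), every $\Lambda$-argument is shifted by $O(1/r)$, and Lipschitz continuity of $\Lambda$ on $[0,2\pi]$ shows the value is unchanged to within $O(1/r)$; hence the same $v_8 + O\!\left(\frac{\log r}{r}\right)$ conclusion holds. I would present this as: verify admissibility, verify hypotheses (a)--(b), invoke Theorem \ref{growthtrunctetra} for the limit and the non-cancellation of signs, and then cite the computation in \cite[Lemma 3.13]{growth6j} verbatim --- with the observation that it depends on the entries only through their leading order $\tfrac{r}{2} + O(1)$ --- to obtain the error term.
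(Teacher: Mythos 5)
Your strategy is essentially the paper's: verify hypotheses (a)--(b) of Theorem~\ref{growthtrunctetra} for the all-equal $6$-tuple, observe that the corresponding hyperideal tetrahedron degenerates to the regular ideal octahedron, and read off $\mathrm{vol}(\Delta)=v_8$. The paper handles $r\equiv 1\pmod 4$ by noting the entry $\frac{r-1}{2}$ coincides with the one appearing in~\eqref{sharp6j} and reserves Theorem~\ref{growthtrunctetra} for $r\equiv 3\pmod 4$; you run both parities through Theorem~\ref{growthtrunctetra}, which is fine. You also correctly observe that Theorem~\ref{growthtrunctetra} only yields the bare limit and not the $O(\log r/r)$ rate --- a point the paper's own proof glosses over for the $r\equiv 3\pmod 4$ subsequence --- and your plan to redo the Lobachevsky-function estimates from \cite[Lemma~3.13]{growth6j} after shifting the entries by $O(1)$ is the right way to close that.

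Two small corrections to your write-up. First, the hyperideal truncated tetrahedron with all six dihedral angles equal to $0$ is the regular ideal \emph{octahedron}, not the regular ideal tetrahedron (whose volume is $v_3\approx 1.0149$, not $v_8$); you state the right volume but the wrong name. Second, the Lobachevsky function $\Lambda$ is \emph{not} Lipschitz on $[0,2\pi]$: its derivative $-\log|2\sin\theta|$ blows up at multiples of $\pi$, which is exactly the regime $\theta=\pi+O(1/r)$ your arguments live in. An $O(1/r)$ shift of the argument therefore does not automatically give an $O(1/r)$ change in $\Lambda$. What is true is that the modulus of continuity of $\Lambda$ near an endpoint is $O\!\left(\delta\log(1/\delta)\right)$, so a shift of size $\delta\sim 1/r$ changes $\Lambda$ by $O(\log r/r)$. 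That is still within the claimed error term, so your conclusion survives, but the justification should replace ``Lipschitz'' by this log-Lipschitz estimate.
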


\begin{proof}
First note that the $r \equiv 1 \mod 4$ case is covered by Equation (\ref{sharp6j}). When $r \equiv 3 \mod 4$, $T_i = \frac{3(r-3)}{4}$ for all $i=1,2,3,4$ and $Q_j = r-3$ for $j=1,2,3$, so the $6$-tuple $\left(\frac{r-3}{2},\dots,\frac{r-3}{2} \right)$ satisfies the assumptions of Theorem \ref{growthtrunctetra} for $r \geq 5$. Here the corresponding hyperideal truncated tetrahedron $\Delta$ has dihedral angles $\alpha_i = 0$ for all $i$, so $\Delta$ is a regular ideal hyperbolic octahedron and $vol(\Delta) = v_8$. We refer to \cite{Costantino6j2007}, Definition 2.1 for details. By part $(3)$ of Theorem \ref{growthtrunctetra}, \begin{align*}
   \lim_{r\rightarrow \infty} \frac{2\pi}{r} \log \left|\left| \begin{array}{ccc}
    \frac{r -3}{2} & \frac{r -3}{2} & \frac{r -3}{2} \\
    \frac{r -3}{2} & \frac{r -3}{2} & \frac{r -3}{2}
    \end{array} \right|_{q = e^{\frac{2\pi \sqrt{-1}}{r}}} \right| = v_8.
\end{align*}
\end{proof}



\subsection{Shadow state sum invariants}\label{Shadowsubsec}

We now describe Turaev's state sum invariants for two-dimensional polyhedra representing links in $S^1$-bundles over surfaces. In an effort  to construct analogous invariants to the colored Jones polynomial of links in $S^3$, Turaev \cite{TuraevShadow,TuraevBook} introduces a technique to present links in $S^1$-fibrations over surfaces as loops on $\Sigma_{g}$ with additional topological data given by the bundle. From this $2$-dimensional presentation, we can build quantum invariants of the colored link.

We begin by recalling the construction of Turaev's shadow state sum invariant \cite{TuraevShadow, TuraevBook} for $S^1$-bundles over surfaces, largely following the construction given in \cite{TuraevShadow}. Let $\Sigma_{g,n}$ be a compact orientable surface of genus $g$ with $n$ boundary components. Consider a finite collection of loops $\{l_i : S^1 \rightarrow \Sigma_{g,n}\}$ on $\Sigma_{g,n}$ with only double transversal crossings $l_i \cap l
_j$ for any $i,j$. Denote by $\Gamma$ the 1-dimensional CW-complex consisting of the collection of loops $\{l_i\}$ and crossing points $\{l_i \cap l_j\}$, and let $P$ denote the pair $(\Sigma_{g,n}, \Gamma)$. We define the connected components $X_t$ of $\Sigma_{g,n} \backslash \Gamma$ to be the \textit{regions} of $P$.

\begin{definition}
A \textit{shadow} is a pair $(P,gl)$ where $gl: \{X_t\} \rightarrow \frac{1}{2}\mathbb{Z}$ is a map that assigns a half-integer to each region of $P$. This half-integer is called the \textit{gleam} of the region. The \textit{total gleam} of a shadow is defined to be
\begin{align*}
    \text{total gleam} &= \sum_t \left(gl(X_t)\right) -2 \# \{l_i \cap l_j\},
\end{align*}
where $\# \{l_i \cap l_j\}$ is the number of crossing points of $P$.
\end{definition}

We will restrict our attention to shadows on closed surfaces. Suppose $\Sigma_{g}$ is a closed orientable surface and $\rho:M \rightarrow \Sigma_{g}$ is an oriented $S^1$-bundle over $\Sigma_{g}$. Let $L\subset M$ be a link. We say that $L\subset M$ is \textit{generic} if it is transverse to the fibers with respect to $\rho$ and the collection of immersed loops $\rho(L) \subset \Sigma_{g,n}$ only have double transversal crossings. In \cite{TuraevShadow}, Turaev constructs a map which associates a shadow $(P(L),gl)$ to $L \subset M$, where the gleams of each region of $P(L)$ are determined by the Euler number of the 2-dimensional real vector bundle associated to the oriented circle bundle $\rho$. The construction of the map $gl: \{X_t\} \rightarrow \frac{1}{2}\mathbb{Z}$ for general $S^1$-bundles over $\Sigma_g$ will not be relevant to the arguments that follow, so we refer to Section 3(a) of \cite{TuraevShadow} for further details. The following theorem of Turaev is a result of this construction. 

\begin{thm}[\cite{TuraevShadow}, Theorem 3.2]\label{canonShadow}
Let $\rho:M \rightarrow \Sigma_{g}$ be an oriented circle bundle over a closed orientable surface $\Sigma_{g}$, and let $L \subset M$ be a generic link with respect to $\rho$. Then there is a shadow $(P(L), gl)$ with total gleam $-\chi(p)$ associated to $L \subset M$, where $\chi(p)$ is the Euler number of the bundle.
\end{thm}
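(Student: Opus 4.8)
The plan is to construct the shadow $(P(L),gl)$ explicitly from the link and then to read off the total gleam from a \v{C}ech-style computation of the Euler number of $\rho$. First I would set $P(L)=(\Sigma_g,\Gamma)$, where $\Gamma=\rho(L)$ is the union of the immersed loops $\rho(L_i)$ together with their crossing points: since $L$ is generic, $\rho|_L$ is an immersion whose image has only double transversal crossings, so $\Gamma$ is an honest $4$-valent graph and $P(L)$ is a legitimate shadow polyhedron whose regions $\{X_t\}$ are the connected components of $\Sigma_g\setminus\Gamma$. If $\Gamma=\emptyset$ (so $L=\emptyset$) the unique region is $\Sigma_g$ itself and one declares its gleam to be $-\chi(p)$, making the statement immediate; so assume $\Gamma\neq\emptyset$, in which case every region $X_t$ has nonempty boundary.

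The heart of the matter is the gleam map $gl\colon\{X_t\}\to\tfrac12\mathbb{Z}$. For each region $X_t$, the closure $\overline{X_t}$ is a compact surface with boundary, so the restricted circle bundle $\rho^{-1}(\overline{X_t})\to\overline{X_t}$ is trivial; fix a trivialization. Because $L$ is transverse to the fibers, its position in the fibers over $\Gamma$ determines a distinguished section of $\rho$ over each arc of $\partial X_t$, and I would define $gl(X_t)$ to be the relative Euler number measuring the obstruction to extending this boundary section over $X_t$ inside the trivialized bundle, together with a contribution of $\pm\tfrac12$ at each corner of $X_t$ coming from the local model of $L$ near the corresponding double point of $\Gamma$ --- this is exactly where half-integers appear. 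One then checks that $gl(X_t)$ is independent of the chosen trivialization and invariant under the permitted isotopies of $L$, which is a routine obstruction-theoretic (linking-number) computation on the piece $\rho^{-1}(\overline{X_t})$.

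Finally I would establish $\sum_t gl(X_t)-2\,\#\{l_i\cap l_j\}=-\chi(p)$. The regions, the arcs of $\Gamma$, and the double points assemble into a CW structure on $\Sigma_g$ over whose closed $2$-cells $\rho$ is trivial, and the Euler number $\chi(p)$ is computed as the sum over $2$-cells of the local relative Euler numbers corrected by transition terms recorded along the $1$-skeleton; the transition terms are concentrated at the $4$-valent vertices, where four regions and four local trivializations meet, and they contribute precisely $-2$ per crossing. Comparing this with the definition of the total gleam gives the asserted value. The main obstacle is exactly this last step of bookkeeping: one must fix the sign and orientation conventions so that the per-corner contributions $\pm\tfrac12$ and the per-crossing patching term $-2$ are mutually consistent and together reproduce the Euler class with the correct sign; making the half-integer corner terms and the crossing count cohere is the delicate point, and it is also what forces the gleam to be genuinely half-integral rather than integral in general.
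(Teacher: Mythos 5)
The paper does not prove this statement; it is imported verbatim as a cited result of Turaev (\cite{TuraevShadow}, Theorem 3.2), and the text explicitly defers the construction of the gleam map $gl\colon\{X_t\}\to\tfrac12\mathbb{Z}$ to Section 3(a) of that reference because the details are not needed for the arguments that follow. So there is no in-paper proof to measure your attempt against.

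That said, your blind reconstruction matches Turaev's construction in outline: take $\Gamma=\rho(L)$, which genericity makes a $4$-valent graph; define $gl(X_t)$ as a relative Euler number of $\rho$ restricted over the region $X_t$, with boundary section traced out by $L$ over the arcs of $\partial X_t$ and half-integer corner corrections coming from the local over/under model of $L$ near double points; then obtain the total-gleam identity from additivity of the Euler number over the CW structure on $\Sigma_g$ induced by $\Gamma$. Two small cautions. First, one should use the abstract compactification of $X_t$ rather than its closure in $\Sigma_g$, since a region may abut the same arc or vertex from more than one side and $\overline{X_t}$ need not be a surface with corners. Second, you correctly flag the signs at crossings as the delicate bookkeeping; as a sanity check, with the conventions of the paper's Figure \ref{fig: GleamTrivial} (where a crossing contributes $+1$ to two of its four adjacent regions and $0$ to the other two, hence $+2$ in total), the asserted relation $\sum_t gl(X_t)-2\#\{l_i\cap l_j\}=-\chi(p)$ does come out right for the trivial bundle. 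Your proposal captures the strategy but is, as you acknowledge, a sketch rather than a complete proof; so is the paper's treatment, by design.
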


For our purposes, we restrict further to the special case where $L \subset \Sigma_{g} \times S^1$ and $\rho: \Sigma_{g} \times S^1 \rightarrow \Sigma_g$ is the trivial bundle. We can embed $\Sigma_g \times [0,1] \hookrightarrow \Sigma_g \times S^1$ via the map 
\[
(x,t) \longmapsto \left(x,e^{2\pi \sqrt{-1}t}\right).
\]
Now consider $L$ as a subset of $\Sigma_g \times [0,1]$. It is a generic link with well-defined over- and under-crossings in the projection $\rho|_{\Sigma_g \times [0,1]}(L)$ on $\Sigma_g$. This projection produces a shadow on $\Sigma_g$ with gleams assigned as in Figure \ref{fig: GleamTrivial}, where the gleam of each region is the sum of the associated $1$'s.

\begin{figure}[!htb]
    \centering
    \includegraphics[width=3cm]{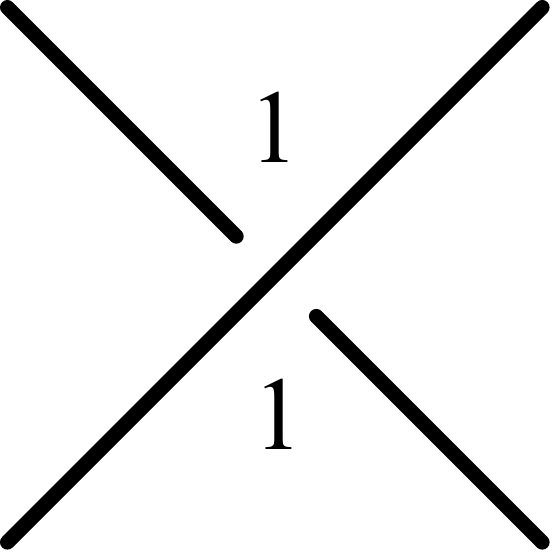}
    \caption{Gleam assignment for shadows of links in trivial bundles $\Sigma_{g} \times [0,1]$.}
    \label{fig: GleamTrivial}
\end{figure}

Note that $L \subset \Sigma_g \times [0,1] \subset \Sigma_g \times S^1$ is a generic link with respect to $\rho$. The projection $\rho|_{\Sigma_g \times [0,1]}(L)$ with gleams assigned using Figure \ref{fig: GleamTrivial} coincides with the shadow $(P(L),gl)$ constructed using Theorem \ref{canonShadow} with total gleam $-\chi(\rho) = 0$. As an elementary example, we consider the following example of Costantino and Thurston \cite{CostantinoThurston}. 

\begin{ex}[\cite{CostantinoThurston}, Example 3.5]
Consider the shadow on $\Sigma_0 = S^2$ with $\Gamma = \emptyset$ and total gleam $0$. This shadow corresponds to the empty link in the bundle $\rho: S^2 \times S^1 \rightarrow S^2$, where the triviality of the bundle is encoded by the zero gleam.
\end{ex}

In order to define Turaev's shadow state sum invariant, we need to consider colorings of the link $L \subset M$. 
\begin{definition}
Let $M$ be a closed $3$-manifold. An \textit{$I_r$-coloring} of a link $L \subset M$ assigns an element of $I_r$ to each component of $L$. Similarly, an \textit{$I_r$-coloring} of a shadow $(P,gl)$ assigns an element of $I_r$ to each loop of $(P,gl)$.
\end{definition}
If $\rho:M \rightarrow \Sigma_{g}$ is a circle bundle over a closed surface $\Sigma_{g}$, an $I_r$-coloring $\gamma$ of a link $L$ in $M$ descends to an $I_r$-coloring $\gamma$ of the loops of the shadow $(P(L),gl)$ constructed using Theorem \ref{canonShadow}.

\begin{definition}
Let $(P,gl,\gamma)$ be an $I_r$-colored shadow with gleams $gl$ and loops colored by $\gamma$. A \textit{surface-coloring} $\eta$ of $(P,gl,\gamma)$ assigns an element of $I_r$ to each region of $(P,gl,\gamma)$.
\end{definition}
Suppose an edge $e$ from a loop of $(P,gl,\gamma)$ is adjacent to two regions $X,X'$ of $(P,gl,\gamma)$. This edge has a fixed color $\gamma(e)$, and the regions $X$ and $X'$ are assigned colors $\eta(X)$ and $\eta(X')$, respectively, by $\eta$. 

\begin{definition}
A surface-coloring $\eta$ of $(P,gl,\gamma)$ is called \textit{admissible} if for any edge $e$ adjacent to two regions $X,X'$ of $(P,gl,\gamma)$, the triple $(\gamma(e), \eta(X),\eta(X')) \in I_r^3$ is $r$-admissible in the sense of Definition \ref{rAdmissible}.
\end{definition}
Figure \ref{fig: AdmEdge} gives the local picture for admissibility. Let adm$(P,gl,\gamma)$ denote the set of admissible surface-colorings of $(P,gl,\gamma)$.

\begin{figure}[!htb]
    \centering
    \includegraphics[width=6cm]{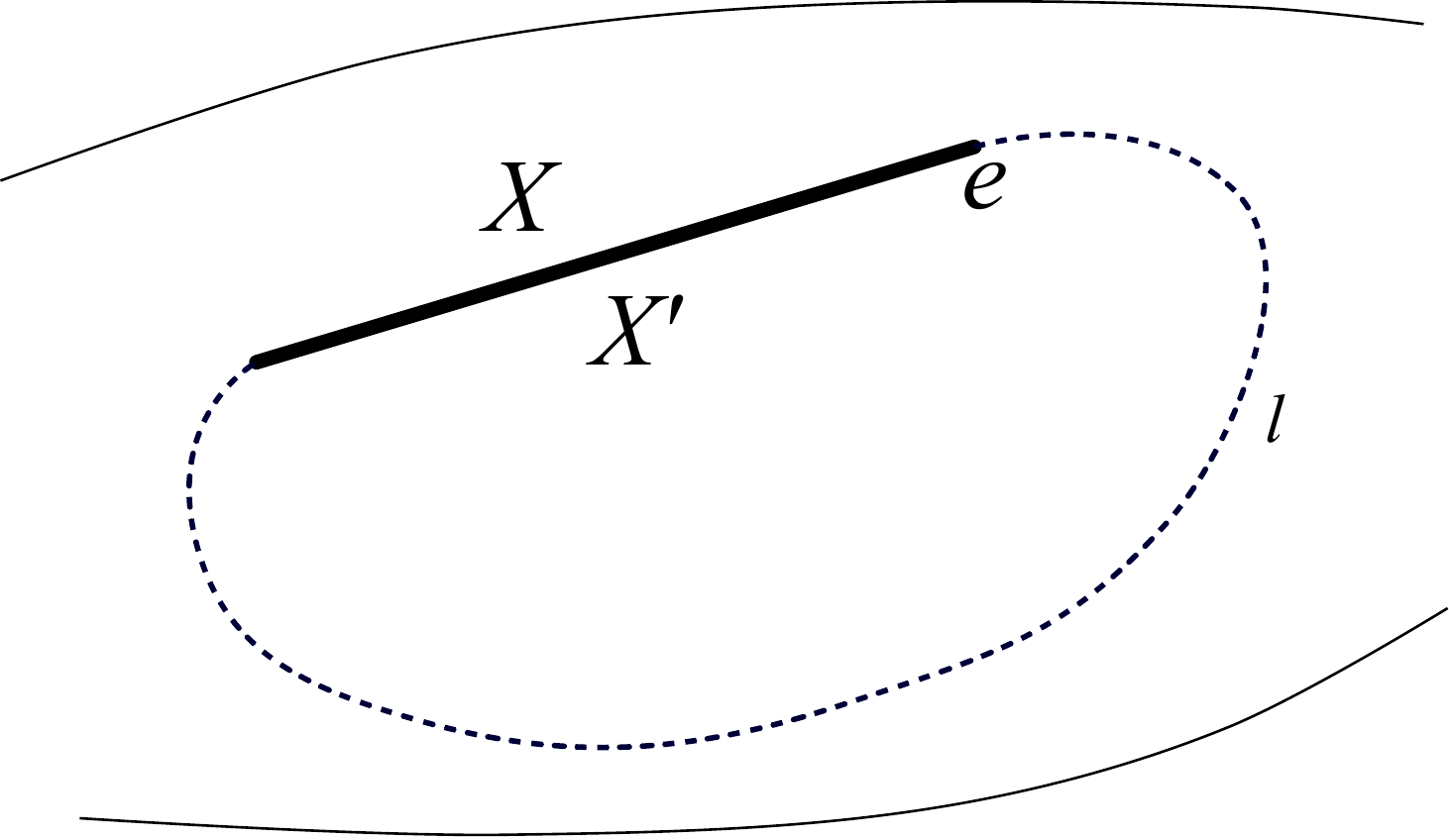}
    \caption{An edge $e$ contained in a loop $l$ of $(P,gl,\gamma)$ and its two adjacent regions $X$ and $X'$. An admissible surface-coloring assigns colors $\eta(X)$ and $\eta(X')$ for which $(\gamma(e), \eta(X),\eta(X'))$ an $r$-admissible triple.}
    \label{fig: AdmEdge}
\end{figure}

Suppose $c_1, \dots, c_p$ are the crossing points of $P$, each an intersection of two distinct loops or a self-crossing of a single loop of $P$. Suppose these loops have colors $i$ and $l$, respectively. Then an admissible surface-coloring $\eta \in \text{adm}(P,gl,\gamma)$ assigns colors $j,k,m,n$ to the four regions incident at the crossing point $c_s$ so that $(i,j,k,l,m,n)$ forms an $r$-admissible $6$-tuple. In particular, $(i,j,k), (i,m,n), (j,l,n),$ and $(k,l,m)$ are $r$-admissible triples. Figure \ref{fig: admAreaColoring} illustrates an admissible surface-coloring $(j,k,m,n)$ around a crossing point of loops colored by $i$ and $l$.

\begin{figure}[!htb]
    \centering
    \includegraphics[width=3cm]{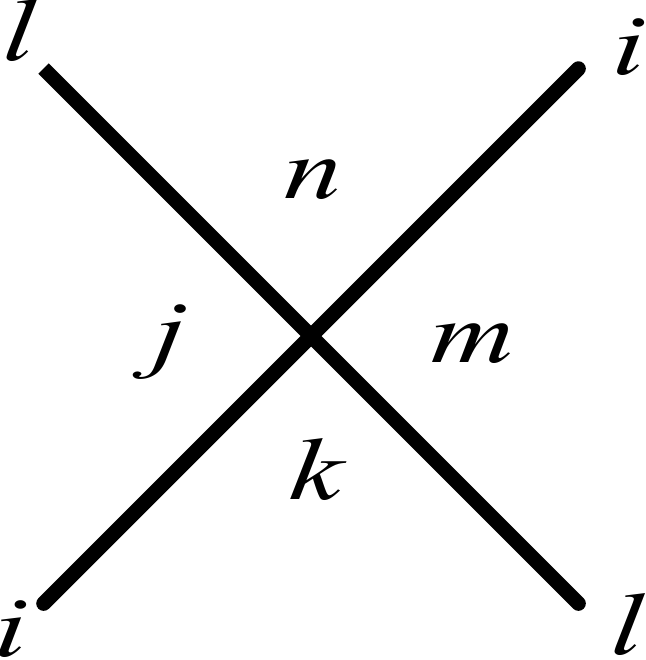}
    \caption{Admissible surface-coloring at a crossing.}
    \label{fig: admAreaColoring}
\end{figure}
Using Definition \ref{q6jdef} of the quantum $6j$-symbol, we let
\[
|c_s|^{\eta} = \left| \begin{array}{ccc}
    i & j & k \\
    l & m & n
    \end{array} \right| \in \mathbb{C}.
\]

Let $X_1,\dots, X_q$ be the regions of $(P,gl,\gamma)$, and let $x_t$, $\chi_t,$ and $z_t$ be the gleam, Euler characteristic, and number of corners of the region $X_t$, respectively. Define the \textit{modified gleam} of $X_t$ by $x_t' = x_t-z_t/2$.
For $j \in I_r$, let
\begin{align*}
    u_j = \pi \sqrt{-1}\left(\frac{j}{2}\right)\left(1-\frac{j+2}{r}\right), & \qquad v_j = (-1)^j[j+1].
\end{align*}
Then for each admissible surface-coloring $\eta$, let
\begin{align}\label{state}
|(P,gl)|^{\eta}_{\gamma} &= \prod_{s = 1}^p |c_s|^{\eta} \prod_{t=1}^q  \left(\left(v_{\eta(X_t)}\right)^{\chi_t} \text{exp}\left(2 u_{\eta(X_t)} x_t'\right)  \right) \in \mathbb{C},
\end{align}
where $\eta(X_t)$ is the region color of $X_t$ assigned by $\eta$.

\begin{remark}
Note that the gleams and the surface-colorings are independent of each other. The gleams encode topological data from the $S^1$-bundle $\rho:M \rightarrow \Sigma_{g}$ and do not affect the quantum $6j$-symbols in the first product of Equation (\ref{state}), only the second product taken over the regions of $(P,gl)$. 
\end{remark}

\begin{definition}
The \textit{shadow state sum} $|(P,gl)|_{\gamma}$ is defined by the following sum over all admissible surface-colorings $\eta \in$ adm$(P,gl,\gamma)$.
\begin{align}\label{statesum}
    |(P,gl)|_{\gamma} &:= \sum_{\eta \in \text{adm}(P,gl,\gamma)} \left|(P,gl)\right|_{\gamma}^{\eta} \in \mathbb{C}.
\end{align}
\end{definition}

Turaev established the following theorem in \cite{TuraevShadow} and generalized it in \cite{TuraevBook}.

\begin{thm}[\cite{TuraevShadow}, Theorem 5.1 and Corollary 5.2]\label{Theorem:StateSum}
Let $(P,gl,\gamma)$ be an $I_r$-colored shadow. Then the shadow state sum $|(P,gl)|_{\gamma}$ is a complex-valued regular isotopy invariant of colored shadows. Furthermore, this invariant gives rise to a complex-valued isotopy invariant of colored links in $S^1$-bundles over closed orientable surfaces.
\end{thm}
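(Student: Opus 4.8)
For a fixed colored shadow $(P,gl,\gamma)$ the set $\mathrm{adm}(P,gl,\gamma)$ is finite, since $P$ has finitely many regions and $I_r$ is finite; hence the sum in Equation (\ref{statesum}) is a well-defined complex number. To establish invariance I would invoke Turaev's shadow calculus \cite{TuraevShadow,TuraevBook}: regular isotopy of colored shadows is generated by ambient isotopy of the shadowed loop diagram on $\Sigma_g$ together with a short list of local moves, which in the present (loops-on-a-surface) setting one may take to be a bigon (Reidemeister~II type) move, a triangle (Reidemeister~III type) move, and a gleam move that introduces or removes a small curl on a strand while changing the gleam of the enclosed region by $\pm 1$. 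It therefore suffices to check that $|(P,gl)|_\gamma$ is unchanged under each such move; the passage from a regular-isotopy invariant of shadows to an isotopy invariant of colored links in an oriented circle bundle $\rho\colon M\to\Sigma_g$ is then immediate from Turaev's construction of the shadow of a link (Theorem \ref{canonShadow}), under which isotopic generic links have shadows related by exactly these moves and with the same total gleam $-\chi(\rho)$. One useful observation for the computations below is that $|c_s|^\eta$ in Equation (\ref{state}) depends only on the six colors at the crossing and not on which strand is the overstrand, so crossing information is irrelevant and every variant of a given move is handled by the same calculation.

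\textbf{The bigon and triangle moves.} The bigon move adds two crossing points, hence two quantum $6j$-symbols to the product in Equation (\ref{state}); it creates one new region whose color $c$ is summed over, and it changes the corner counts $z_t$, Euler characteristics $\chi_t$, and modified gleams $x_t'=x_t-z_t/2$ of the neighbouring regions. Extracting the new local contribution and summing over $c\in I_r$, invariance reduces to the orthogonality relation for the quantum $6j$-symbols at $q=e^{2\pi\sqrt{-1}/r}$, schematically
\[
\sum_{c\in I_r} v_c\,
\left|\begin{array}{ccc} a & b & c \\ d & e & f\end{array}\right|
\left|\begin{array}{ccc} a & b & c \\ d & e & f'\end{array}\right|
= \delta_{f,f'}\, N(a,b,d,e,f),
\]
where $v_j=(-1)^j[j+1]$ and $N$ is an explicit product of quantum factorials (a ratio of theta-symbols built from the $\Delta$'s of Section \ref{Q6jSubSec}); the region weights $\bigl(v_{\eta(X_t)}\bigr)^{\chi_t}\exp\bigl(2u_{\eta(X_t)}x_t'\bigr)$ attached to the created and enlarged regions are calibrated precisely to cancel $N$ and the phase on the right-hand side, and the symmetries in Equation (\ref{q6jSymmetries}) put the two $6j$-symbols into the required form. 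The triangle move replaces a configuration of three crossings surrounding a triangular region by the reflected configuration; since the region and corner data away from the triangle are unchanged, equality of the two state-sum contributions, after summing over the color of the central region, is exactly the Biedenharn--Elliott (pentagon) identity for the quantum $6j$-symbols, again combined with Equation (\ref{q6jSymmetries}). Both the orthogonality relation and the Biedenharn--Elliott identity are classical at the root $q=e^{2\pi\sqrt{-1}/r}$ (see \cite{KirillovReshetikhin6j,TuraevViro,TuraevShadow}), so I would quote them rather than reprove them.

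\textbf{The gleam move.} Adding a small curl on a strand colored $j$ introduces one crossing point (hence one quantum $6j$-symbol, with $j$ repeated among its entries), creates one enclosed region, adds a corner to the neighbouring region, and changes the gleam of the enclosed region by $\pm 1$. Summing the new contribution over the color of the enclosed region and using the standard ``twist'' identity for the quantum $6j$-symbols, the difference between the two sides of the move collapses to the scalar by which the factor $\exp\bigl(2u_j x_t'\bigr)$ in Equation (\ref{state}) changes under a $\pm 1$ gleam shift; that these agree is exactly the statement that $u_j=\pi\sqrt{-1}(j/2)\bigl(1-(j+2)/r\bigr)$ is the logarithm of the half-twist coefficient on color $j$, and it is the reason the \emph{modified} gleam $x_t'=x_t-z_t/2$ — adjusted for corners — is the quantity that governs the region weights. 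Since this move trades a unit of framing for a unit of gleam, its invariance is precisely what upgrades $|(P,gl)|_\gamma$ from a regular-isotopy invariant of shadowed diagrams to an isotopy invariant of colored links in the circle bundle.

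\textbf{Main obstacle.} The genuinely deep algebraic inputs — orthogonality and Biedenharn--Elliott for the quantum $6j$-symbols at a root of unity, together with the elementary theta-symbol and half-twist identities — are classical and may be cited. The real work, and the principal source of error, is the combinatorial and normalization bookkeeping: tracking how the number of regions, the corner counts $z_t$, the Euler characteristics $\chi_t$, the gleams $x_t$ and modified gleams $x_t'$, and the powers of $\sqrt{-1}$ in Equation (\ref{q6j}) transform under each move, and verifying that the region weights $\bigl(v_{\eta(X_t)}\bigr)^{\chi_t}\exp\bigl(2u_{\eta(X_t)}x_t'\bigr)$ are tuned to exactly absorb the normalization factors produced by the algebraic identities. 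The most delicate case is the gleam move, where one must use the precise form of $u_j$ and of the modified gleam to see that a unit change of gleam with a compensating curl leaves the sum invariant; this, together with the total-gleam normalization of Theorem \ref{canonShadow} (which guarantees that the sum runs over the correct space of surface-colorings and is independent of the chosen generic projection), is what yields a well-defined isotopy invariant of colored links in $S^1$-bundles over closed orientable surfaces.
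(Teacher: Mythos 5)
The paper does not prove this statement at all: it is imported verbatim from Turaev (\cite{TuraevShadow}, Theorem 5.1 and Corollary 5.2), so there is no in-paper argument to compare against. Your proposal is a reasonable reconstruction of how Turaev's actual proof goes — invariance is indeed checked against a generating set of shadow moves, with the bigon move handled by the orthogonality relation for the quantum $6j$-symbols, the triangle move by the Biedenharn--Elliott identity, and the curl/gleam move by the half-twist identity, which is precisely why the weights $v_j$, $u_j$ and the modified gleam $x_t'=x_t-z_t/2$ have the form they do. So the architecture is right and matches the source.

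That said, as a proof the proposal has real gaps rather than just deferred bookkeeping. First, the claim that the region weights ``are calibrated precisely to cancel'' the theta-symbol normalization $N(a,b,d,e,f)$ and the phases is exactly the content of the theorem; asserting calibration without tracking how $z_t$, $\chi_t$, $x_t$ and the $\sqrt{-1}$-powers in Equation (\ref{q6j}) change under each move is where an actual attempt would succeed or fail. Second, your list of moves and gleam conventions is not pinned down: in Turaev's calculus the curl move redistributes \emph{half-integer} gleams between the new small region and its neighbour (consistent with $gl$ taking values in $\tfrac12\mathbb{Z}$ and with the $-2\#\{l_i\cap l_j\}$ term in the total gleam), not an integer shift of a single region, and getting this wrong breaks the cancellation against $\exp(2u_jx_t')$. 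Third, the ``Furthermore'' clause is not immediate from Theorem \ref{canonShadow}: the statement that isotopic generic links in an $S^1$-bundle have shadows related by the listed moves is itself a separate general-position theorem in \cite{TuraevShadow} that must be invoked (or proved), not a formal consequence of the existence of the shadow map. None of these are wrong ideas — they are the right ideas left unverified — but a referee would not accept the proposal as a proof without the move-by-move computation and the precise citation of the isotopy-to-moves correspondence.
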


\begin{remark}
The notion of \textit{regular isotopy invariance of colored shadows} will not be relevant for our purposes since we only work with the projections arising from the construction of the links in Section \ref{AgolandLinksSec}. For details on the relationship between isotopy invariance of colored shadows and colored links in $S^1$-bundles over surfaces, we refer to \cite{TuraevShadow} Sections 2, 3, and 4.
\end{remark}

Turaev generalized the construction of colored shadows from the setting of colored links in $S^1$-bundles over closed surfaces \cite{TuraevShadow} to colored links in closed $3$-manifolds in \cite{TuraevBook}, Chapters IX and X. While this general characterization is not relevant to the arguments that follow, we include a brief description. Turaev shows that any $3$-manifold $N$ which is the boundary of a $4$-manifold $W$ can be associated a shadow $(P,gl)$. Roughly speaking, Turaev constructs the polyhedron $P$ using dual cell subdivisions of a triangulation of $N$ and equips gleams which encode the topology of the regular neighborhood of $P$ in $W$. The details of this construction can be found in IX.1 of \cite{TuraevBook}. Further, if $N$ contains a framed link $T$ colored by $\gamma$, Turaev extended this construction to a colored shadow $(P,gl,\gamma)$ associated to $(N,T,\gamma)$. In addition, we note this construction can be generalized to colored framed trivalent graphs contained in a $3$-manifold $N$. We refer the reader to X.7.1 of \cite{TuraevBook} for details.

For simplicity, we consider the following alternative construction, which can be found in \cite{CostantinoThurston}, to further support this more general notion of a shadow of a $3$-manifold. The framed link $L \subset S^3$ has a shadow $(P_0,gl_0)$ constructed by gluing a disk to $L \times [0,1]$ along $L \times \{0\}$. Surgery along a component of $L$ is equivalent to gluing the core of a $2$-handle to $P_0$, and this gluing does not change the gleam of the capped region. Due independently to Lickorish \cite{LickorishSurgery} and Wallace \cite{wallacesurgery}, any $3$-manifold $N$ can be obtained by performing integer surgery on a link in $S^3$, meaning that every $3$-manifold $N$ has a shadow with gleams related to its surgery presentation. We refer the reader to Chapter 12 of \cite{LickorishBook} and Chapter 9 of \cite{RolfsenBook} for more details on knot and link surgery.

\subsection{Relating the quantum invariants}\label{RelRTsubsec}

Here, we introduce the $r$-th relative Reshetikhin--Turaev invariants and their relation to Turaev's shadow state sum invariant established in \cite{TuraevBook}. We then define the Turaev--Viro invariants, which can be computed in terms of the $r$-th relative Reshetikhin--Turaev invariants \cite{growth6j, colJvolDKY}. We remark that the precise definitions presented here are not essential to understanding the arguments that follow, but nevertheless provide valuable context for the motivation behind our arguments.

We  give a brief outline of  the relative Reshetikhin--Turaev invariants using the skein theoretical approach. For more details, see \cite{BHMVKauffman} by Blanchet, Habegger, Masbaum, and Vogel as well as \cite{LickorishSkein} by Lickorish. Although we will define the relative Reshetikhin--Turaev invariants in terms of a surgery presentation for a $3$-manifold, we will see in Theorem \ref{RelRTshadowstatesum} that the invariants can also be constructed from the state sum invariants considered in Theorem \ref{Theorem:StateSum}, which will be more useful for our purposes. 

For an oriented $3$-manifold M and $r \geq 3$, we define the \emph{Kauffman bracket skein module} $K_r(M)$ of $M$ to be the $\mathbb{C}$-module generated by the isotopy classes of framed links in $M$ modulo the following relations: 
\begin{enumerate}[(I)]
    \item \emph{Kauffman bracket skein relation}: $\left \langle \KBCross  \right\rangle = q^{\frac{1}{2}} \left \langle \KBH  \right\rangle + q^{-\frac{1}{2}} \left \langle \KBV  \right\rangle $.
    
    \item \emph{Framing relation}: $\left \langle L \sqcup \KBCirc  \right\rangle = \left( -q - q^{-1} \right) \left \langle L \right \rangle$.
\end{enumerate}
In the case when $M=S^3$, the Kauffman bracket skein module $K_r(S^3)$ is $1$-dimensional, and we obtain an isomorphism 
$$\left \langle \cdot \right \rangle: K_r(S^3) \to \mathbb{C}$$
by sending the empty diagram to $1$. For a link $L \subset S^3$, we call the image 
$\left \langle L \right \rangle \in \mathbb{C}$ the \emph{Kauffman bracket} of $L$.

We now consider the Kauffman bracket skein module $K_r(S^1\times [0,1]^2)$  of the solid torus. For any framed link $L \subset S^3$ with $k$ ordered components and $b_1, b_2, \ldots,b_k \in K_r(S^1\times[0,1]^2)$, we define the  $\mathbb{C}$-multilinear map 
\[
\left \langle \cdot, \dots, \cdot \right \rangle_L: K_r(S^1 \times [0,1]^2) \to \mathbb{C},
\]
where $\left \langle b_1, b_2, \dots, b_k \right \rangle_L$ is the
 cabling of the components of $L$ by $b_1, b_2,\dots, b_k$ followed by evaluating in $K_r(S^3)$ using the Kauffman bracket. 
 
 On $K_r(S^1\times[0,1]^2)$, there is a commutative multiplication induced by juxtaposition of annuli $S^1 \times [0,1] \times \{pt\}$ making $K_r(S^1\times [0,1]^2)$ a $\mathbb{C}$-algebra. By sending the core of the annuli to the indeterminate $z$, we obtain the isomorphism $K_r(S^1\times [0,1]^2) \cong \mathbb{C}[z]$. We will not go into detail; however, we can construct  specific elements $e_m, \omega_r \in K_r(S^1 \times [0,1]^2)$ where $m \in I_r$. The  $e_m$ will correspond to colorings of our link, and the element $\omega_r$ is known as the \emph{Kirby coloring} which will allow us to define an invariant for a framed link in any closed oriented $3$-manifold. We will now define the $r$-th relative Reshetikhin--Turaev invariants. 
 \begin{definition}
 Let $M$ be a closed oriented $3$-manifold presented in $S^3$ by surgery along the framed link $L'$ with $n'$ components, and let $L$ be a framed link in $S^3$ with $n$ components. We consider the link $L \sqcup L' \subset S^3$ with $n+n'$ components where the first $n$ components correspond to the components of  $L$. For a coloring $\gamma = (\gamma_1, \gamma_2, \dots, \gamma_n) \in I_r^n$ of components of $L$, we define the \emph{r-th relative Reshetikhin--Turaev invariants} as 
 $$RT_r(M,L,\gamma) = \mu_r \left \langle e_{\gamma_1}, \dots, e_{\gamma_n}, \omega_r, \dots, \omega_r     \right \rangle_{L\sqcup L'} \left \langle \omega_r \right \rangle_{U_+}^{-\sigma(L')}$$
 where $\mu_r$ is a constant dependent on $r$, $U_+$ is the $+1$ framed unknot, and $\sigma(L')$ is the signature of the linking matrix of $L'$. For the explicit constructions of $\mu_r$, $e_m$, and $\omega_r$, we again reference \cite{BHMVKauffman, LickorishSkein}. 
  \end{definition}

Turaev's constructions in IX and X of \cite{TuraevBook} establish a deep relationship between the $r$-th relative Reshetikhin--Turaev invariants and the generalized shadow state sum invariant. Notably, one can study the $r$-th relative Reshetikhin--Turaev invariants of a colored framed link in a closed $3$-manifold from the perspective of colored shadows. In \cite{costantinoColoredJones}, Costantino used this relationship to study the colored Jones invariants of links in $S^3 \#_k \left(S^2 \times S^1\right)$ from the shadow perspective. We include Costantino's statement of Turaev's result from X.7.1 of \cite{TuraevBook} here.

\begin{thm}[\cite{costantinoColoredJones}, Theorem 3.3]\label{RelRTshadowstatesum}
Let $N$ be a closed $3$-manifold and $T\subset N$ a colored framed trivalent graph in $N$ colored by $\gamma$. Let $(P,gl,\gamma)$ be a colored shadow of $(N,T)$. Then $RT_r(N,T,\gamma):= C_{r} |(P,gl)|_{\gamma}$ is a complex-valued homeomorphism invariant of $(N,T)$.
\end{thm}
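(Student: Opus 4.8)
The plan is to prove the two assertions implicit in the statement in turn: first, that $C_r\,|(P,gl)|_\gamma$ depends only on the triple $(N,T,\gamma)$ and not on the chosen shadow, and second, that this common value agrees with $RT_r(N,T,\gamma)$ as defined via a surgery presentation. By Theorem \ref{Theorem:StateSum} the state sum is already a regular isotopy invariant of the colored shadow, so the content of the first assertion is invariance under the moves that change the shadow polyhedron while fixing $(N,T)$. For this I would use that any two colored shadows of the same $(N,T,\gamma)$ are connected by a finite sequence of local \emph{shadow moves} --- the polyhedral analogues of the Matveev--Piergallini moves on special spines, together with the gleam-transfer move that redistributes a half-integer gleam across an edge --- so it suffices to check that the normalized state sum is unchanged by each such move. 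The constant $C_r$ would be taken to be an appropriate power of the normalization scalar $\langle\omega_r\rangle$, with exponent dictated by the change in the number of vertices and regions (equivalently, the Euler characteristic of $P$), chosen precisely so that the moves creating or annihilating vertices and regions become invariances.

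Verifying move-invariance reduces to classical identities for the quantum $6j$-symbols at $q=e^{2\pi\sqrt{-1}/r}$. The move exchanging two tetrahedral vertices for three is governed by the Biedenharn--Elliott, or pentagon, identity; the move cancelling a pair of vertices uses the orthogonality relation for $6j$-symbols; a bubble move capping a disk contributes a factor of the form $v_{\eta}^{2}$ summed against a Gauss-type sum over $I_r$, which is exactly what $C_r$ absorbs; and the gleam-transfer move is accounted for by the twist behavior encoded in the factor $\exp(2u_j x_t')$ together with a companion identity among the $6j$-symbols. Each of these is a standard fact about the quantum $6j$-symbols (see \cite{KirillovReshetikhin6j, TuraevViro}), and it holds at this root of unity because the coloring set $I_r$ is precisely the set of $r$-admissible labels, with the $v_j$ and $u_j$ the associated quantum dimensions and twists.

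To identify the resulting invariant with $RT_r(N,T,\gamma)$, I would evaluate the state sum on the shadow coming from a surgery presentation: write $N$ as integral surgery on a framed link $L'\subset S^3$, place the graph $T$ alongside it, cap each strand of a generic diagram of $T\cup L'$ by a disk as in \cite{CostantinoThurston}, and read off the gleams from the framings and crossings. Expanding every Kauffman bracket in the Jones--Wenzl/fusion basis converts the sum over bases into precisely the sum over surface-colorings $\eta$: admissibility at an edge is the nonvanishing condition for the recoupling coefficient, each crossing of the diagram yields one quantum $6j$-symbol $|c_s|^\eta$, each closed region evaluates to a colored framed unknot contributing $v_{\eta(X_t)}^{\chi_t}\exp(2u_{\eta(X_t)}x_t')$, and the Kirby colorings $\omega_r$ on the components of $L'$ collapse the sums over the colors of those regions into the Kirby-colored cabling defining $RT_r$. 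The signature term $\langle\omega_r\rangle_{U_+}^{-\sigma(L')}$ then drops out of a Gauss sum over the linking matrix of $L'$, which on the shadow side is encoded by the total gleam.

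I expect the main obstacle to be exactly this last matching of normalizations: one must confirm that the product of the exponentials $\exp(2u_{\eta(X_t)}x_t')$ over all regions, once summed against the region colors, reproduces the framing/signature factor of $RT_r$ together with the global constant $C_r$, and this requires a careful evaluation of a Gauss sum in terms of the Euler number $\chi(p)=-(\text{total gleam})$. A logically prior obstacle is establishing that shadows of $(N,T)$ are unique up to the shadow moves in the first place, which is the Matveev--Piergallini-type theorem for shadowed polyhedra; I would quote this from \cite{TuraevBook} rather than reprove it. With both in hand, invariance under the moves upgrades the equality verified on the surgery shadow to the equality $RT_r(N,T,\gamma)=C_r\,|(P,gl)|_\gamma$ for every colored shadow of $(N,T,\gamma)$, which in particular shows the right-hand side is a homeomorphism invariant.
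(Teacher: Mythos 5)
The paper does not contain a proof of Theorem~\ref{RelRTshadowstatesum}: it is quoted as a citation of Turaev's result (X.7.1 of \cite{TuraevBook}) in the form stated by Costantino (\cite{costantinoColoredJones}, Theorem~3.3), and the authors supply no argument. There is therefore no ``paper's proof'' to compare against, only the sources the statement is lifted from.

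On the merits, your two-stage plan --- (i) invariance of the normalized state sum under the shadow moves that connect any two shadows of the same $(N,T,\gamma)$, and (ii) identification with the skein-theoretic $RT_r$ by evaluating on the shadow built from a surgery presentation of $N$ --- is a faithful outline of how the theorem is actually established in \cite{TuraevBook}, and the $6j$-identities you name (Biedenharn--Elliott/pentagon, orthogonality, the bubble move) are exactly the tools used. Two points deserve emphasis, both of which you already flag but which are where the real work lives. First, Theorem~\ref{Theorem:StateSum} as stated in this paper only gives regular-isotopy invariance of the state sum for shadows arising from links in $S^1$-bundles; for an arbitrary closed $N$ and a framed trivalent graph $T$ you need the much stronger Matveev--Piergallini-type statement that any two shadowed polyhedra representing $(N,T)$ --- possibly coming from different bounding $4$-manifolds --- are connected by the shadow moves, and this is a substantial theorem of \cite{TuraevBook}, not a consequence of what is stated here. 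Second, the constant $C_r$ is not merely a power of $\langle\omega_r\rangle$ keyed to a vertex count: as the paper itself notes, in Costantino's formulation $C_r$ carries global data of $N$ and of the shadow (and in general cannot be taken independent of the shadow's combinatorics), so matching the $\exp(2u_{\eta(X_t)}x_t')$ factors summed over region colors against the Kirby coloring and the signature correction $\langle\omega_r\rangle_{U_+}^{-\sigma(L')}$ is a nontrivial Gauss-sum computation that your sketch anticipates but does not carry out. With those two ingredients supplied from the references, the argument you describe is sound.
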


\begin{remark}
Theorem \ref{RelRTshadowstatesum} is stated generally in terms of framed trivalent graphs rather than framed links to be consistent with the literature \cite{costantinoColoredJones, TuraevBook}. The full generality of the result is not necessary for the arguments that follow since we only consider manifolds containing framed links.
\end{remark}

\begin{remark}
Here, the factor $C_r$ is considered a ``normalization factor." See \cite{costantinoColoredJones} for a precise formulation. In the case Turaev \cite{TuraevShadow} studies, where $N$ is homeomorphic to an $S^1$-bundle over a closed surface and $T$ is a link, the factor $C_{r}$ does not depend on $T$. It can therefore be ignored for our purposes.
\end{remark}

Finally, we introduce the $r$-th Turaev--Viro invariant and a method for calculating it in terms of the $r$-th relative Reshetikhin--Turaev invariants \cite{growth6j, colJvolDKY}. Turaev and Viro \cite{TuraevViro} defined a real-valued topological invariant on a triangulation of a compact $3$-manifold for fixed $r$ and a root of unity $q$ using quantum $6j$-symbols. We will define the $SU(2)$-version of the invariant for odd $r \geq 3$ and $q = e^{\frac{2\pi \sqrt{-1}}{r}}$ in terms of the quantum $6j$-symbols defined in Subsection \ref{Q6jSubSec}, following the conventions of \cite{growth6j}. We begin by introducing the notion of an admissible coloring of a triangulation.

\begin{definition}
An \textit{$r$-admissible coloring} of a tetrahedron $T$ is a map assigning an $r$-admissible 6-tuple $(a_1,\dots,a_6) \in I_r^6$ to the edges of $T$. In particular, the triples $(a_i,a_j,a_k)$ corresponding to each face of $T$ must be $r$-admissible triples satisfying Definition \ref{rAdmissible}. We say that a coloring of the edges of a triangulation $\tau$ of a $3$-manifold is \textit{$r$-admissible} if each tetrahedron $T \in \tau$ admits an $r$-admissible coloring.
\end{definition}

\begin{definition}
Let $M$ be a compact orientable $3$-manifold with boundary $\partial M$. We say $\tau$ is a \textit{partially ideal triangulation} of $M$ if some vertices of the triangulation are truncated, and the faces of the truncated vertices form a triangulation of $\partial M$.
\end{definition}

Let $Adm(r, \tau)$ denote the set of $r$-admissible colorings of $\tau$. Denote the set of interior vertices of $\tau$ by $V$ and the set of interior edges of $\tau$ by $E$. Given a coloring $\gamma \in Adm(r,\tau)$, and edge $e\in E$, and a tetrahedron $T\in \tau$, we define
\[
|e|_{\gamma} := (-1)^{\gamma(e)}[\gamma(e)+1],
\]
and $|T|_{\gamma}$ to be the quantum $6j$-symbol associated to the 6-tuple assigned to $T$ by $\gamma$.

\begin{definition}\label{TVDefinition}
Fix $r \geq 3$ be odd and $q = e^{\frac{2\pi \sqrt{-1}}{r}}$. Let $M$ be a compact orientable $3$-manifold with boundary $\partial M$, and let $\tau$ be a partially ideal triangulation of $M$. Then the $r$-th Turaev--Viro invariant at the root $q$ is given by
\begin{align}
    TV_r(M,\tau;q) &:= \left(\frac{\sqrt{2}\sin\left(\frac{2\pi}{r}\right)}{\sqrt{r}}\right)^{2|V|} \sum_{\gamma \in Adm(r,\tau)} \left( \prod_{e \in E} |e|_{\gamma} \prod_{T\in \tau} |T|_{\gamma} \right).
\end{align}
\end{definition}

The quantity $TV_r(M,\tau;q)$ is independent of the choice of partially ideal triangulation $\tau$ by \cite{TuraevViro}, so it is a topological invariant of $M$. For a link complement $M\backslash L$, this means $TV_r(M\backslash L;q)$ can be computed by summing over $I_r$-colorings of the link $L \subset M$. In \cite{growth6j} and \cite{colJvolDKY}, the authors prove that the $r$-th Turaev--Viro invariant of a link complement $M\backslash L$ can be computed via the $r$-th relative Reshetikhin--Turaev invariants of $(M,L)$.
\begin{prop}[\cite{growth6j,colJvolDKY}]\label{TVRTProp}
Let $M$ be a $3$-manifold, $L \subset M$ be a link with $k$ components, and  $RT_r(M,L,\gamma)$ be the $r$-th relative Reshetikhin--Turaev invariant of $(M,L)$ with the link $L$ colored by $\gamma \in I_r^k $. Then the Turaev--Viro invariant of the complement $M \backslash L$ at $q = e^{\frac{2\pi \sqrt{-1}}{r}}$ is given by
\begin{align}\label{TVRTSum}
TV_r(M\backslash L;q) = \sum_{\gamma \in I_r^k} \left|RT_r(M,L,\gamma)\right|^2.
\end{align}
\end{prop}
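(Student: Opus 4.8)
The plan is to reduce Proposition~\ref{TVRTProp} to the Turaev--Walker theorem, in the form that relates the Turaev--Viro state sum of a manifold to the squared modulus of the Reshetikhin--Turaev invariant of its boundary, refined so as to keep track of a colored link. Write $N = M\backslash L$. Since $L$ has $k$ components and $M$ is closed, $\partial N$ is a disjoint union of $k$ tori $T_1,\dots,T_k$, the $i$-th being the boundary of a tubular neighborhood of the $i$-th component of $L$. The skein-theoretic Reshetikhin--Turaev construction of \cite{BHMVKauffman} attaches to $\partial N$ a Hermitian vector space $V_r(\partial N)\cong\bigotimes_{i=1}^k V_r(T_i)$ and to $N$ a vector $Z_r(N)\in V_r(\partial N)$, computed from any surgery presentation of $M$ exactly as in the definition of $RT_r(M,L,\gamma)$ but with the solid tori running along $L$ left uncapped; capping the $i$-th boundary torus with the $\gamma_i$-colored core $e_{\gamma_i}$ then recovers $RT_r(M,L,\gamma)$. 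The two facts I would use are: (1) the extension of the Turaev--Walker theorem to manifolds with toroidal boundary, $TV_r(N;q) = \langle Z_r(N), Z_r(N)\rangle$, where $\langle\,\cdot\,,\,\cdot\,\rangle$ is the natural Hermitian pairing on $V_r(\partial N)$; and (2) the classes $e_m$, $m\in I_r$, form an \emph{orthonormal} basis of $V_r(T^2)$ with the conventions of \cite{BHMVKauffman}.

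For (2), orthonormality is the standard computation $\langle e_i, e_j\rangle = RT_r\bigl(S^2\times S^1,\ K_i\sqcup K_j\bigr) = \delta_{ij}$, where $K_i\sqcup K_j$ are two parallel copies of $\{*\}\times S^1$ colored by $i$ and $j$ (gluing two solid tori along $T^2$ by the identity produces $S^2\times S^1$, and the invariant of two parallel colored cores picks out $\delta_{ij}$ because the colors in $I_r$ are self-dual). Granting (1) and (2), I would expand $Z_r(N)$ in the tensor-product basis $\{\bigotimes_{i=1}^k e_{\gamma_i}\}_{\gamma\in I_r^k}$; by the surgery description of $Z_r(N)$ the coefficient of $\bigotimes_i e_{\gamma_i}$ is precisely $RT_r(M,L,\gamma)$, so
\[
TV_r(N;q) \;=\; \langle Z_r(N), Z_r(N)\rangle \;=\; \sum_{\gamma\in I_r^k} \bigl|RT_r(M,L,\gamma)\bigr|^2
\]
by orthonormality of the $\bigotimes_i e_{\gamma_i}$, which is the claim.

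The burden is thus entirely on (1), and this is where I expect the main difficulty to lie: one proves it by adapting Roberts' chain-mail argument (in the skein-theoretic packaging of Benedetti and Petronio), or Turaev's shadow-theoretic argument from \cite{TuraevBook}, to the pair $(N,L)$ with $L$ colored --- equivalently, one notes that for each fixed $\gamma$ the triangulated state sum of $N$ with boundary colors prescribed by $\gamma$ equals $|RT_r(M,L,\gamma)|^2$ by the \emph{closed} Turaev--Walker theorem applied to the decorated pair $(M,L,\gamma)$, and then sums over $\gamma$. Care is needed with normalizations: the identity holds with no overall scalar precisely because we use the full index set $I_r$ (rather than a ``folded'' set of colors as for the colored Jones polynomial, which is what produces the factors of $2^{k-1}$ in the analogous formula for links in $S^3$) together with the normalizations of $e_m$, $\omega_r$ and $\mu_r$ from \cite{BHMVKauffman}. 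Finally, for the manifolds $M_L(k,l)\in\mathcal{M}$ relevant to this paper one could bypass (1) and argue entirely within the shadow formalism, combining Theorem~\ref{RelRTshadowstatesum} with a shadow of a $4$-manifold bounded by $M_L(k,l)$; the argument above is preferred because it applies verbatim to arbitrary $(M,L)$ and matches the references \cite{growth6j, colJvolDKY}.
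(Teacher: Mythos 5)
The paper does not prove Proposition~\ref{TVRTProp}; it cites it as Theorem~3.1 of \cite{growth6j} (and the $SO(3)$ analogue in \cite{colJvolDKY}), and explicitly remarks that the precise definitions in Section~\ref{RelRTsubsec} ``are not essential to understanding the arguments that follow.'' There is therefore no proof in the paper to compare against; what can be assessed is whether your proposed argument matches the proof in the cited references, and it does. Your outline is exactly the TQFT argument used there: realize $TV_r(M\backslash L;q)$ as the squared norm of the Reshetikhin--Turaev vector $Z_r(M\backslash L)$ in the boundary space $V_r(\partial(M\backslash L)) \cong \bigotimes_i V_r(T_i)$ (the Turaev--Walker theorem for manifolds with boundary, established via Turaev's shadow/chain-mail machinery in \cite{TuraevBook} or Roberts--Benedetti--Petronio), expand in the orthonormal basis $\{\bigotimes_i e_{\gamma_i}\}$ of the torus spaces, and identify the coefficients with $RT_r(M,L,\gamma)$. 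You also correctly locate where the real work lives (step~(1), the extended Turaev--Walker theorem) and correctly note that the $SU(2)$ index set $I_r = \{0,\dots,r-2\}$ with the BHMV normalizations is what makes the identity hold with no extra scalar, avoiding the $2^{k-1}$ factors that appear in the $SO(3)$/colored-Jones version. Two small remarks on precision: the orthonormality $\langle e_i, e_j\rangle = \delta_{ij}$ follows from $RT_r(S^2\times S^1, K_i\sqcup K_j) = \dim V_r(S^2; i,j) = \delta_{i,j^*} = \delta_{ij}$, and the last equality uses self-duality of the $SU(2)$ colors, which you should say explicitly; and the pairing must be the Hermitian one (so the second copy of $N$ enters with reversed orientation), which is what makes the coefficients appear as $|RT_r(M,L,\gamma)|^2$ rather than $RT_r(M,L,\gamma)^2$. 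Neither of these is a gap, just wording to tighten.
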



\section{Proof of Theorem \ref{mainthm}}\label{mainThmProofSec}
In this section, we prove Theorem \ref{mainthm} which we restate here.

\begin{thm}\label{mainthmrestate}
Let $M_L(k,l) \in \mathcal{M}$. Then for $r$ running over odd integers and $q = e^{\frac{2\pi \sqrt{-1}}{r}}$,
\begin{align*}
\lim_{r \rightarrow \infty} \frac{2\pi}{r} \log |TV_r(M_L(k,l);q)| = v_3 ||M_L(k,l)|| = 2(k+2l)v_8
\end{align*}
where $v_8 \approx 3.66$ is the volume of the regular ideal hyperbolic octahedron.
\end{thm}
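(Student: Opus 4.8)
The plan is to compute the Turaev--Viro invariant via Proposition \ref{TVRTProp} by summing $|RT_r(M_L(k,l),L,\gamma)|^2$ over all $I_r$-colorings $\gamma$ of the $(2k+2l)$-component link $L$, and then to realize $RT_r$ as a shadow state sum using Theorem \ref{RelRTshadowstatesum}. First I would describe explicitly the shadow $(P(L),gl)$ of $L\subset \Sigma_g\times S^1$ arising from the projection to $\Sigma_g$: each $S$-piece contributes a local shadow coming from two $S$-moves on $\Sigma_{1,1}$ (one crossing point, hence one quantum $6j$-symbol), and each $A$-piece contributes a local shadow coming from two $A$-moves on $\Sigma_{0,4}$ (two crossing points, hence two $6j$-symbols). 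All gleams are computed from the trivial-bundle rule of Figure \ref{fig: GleamTrivial} and, crucially, the total gleam is $-\chi(\rho)=0$, which forces the $\exp(2u_{\eta(X_t)}x_t')$ factors to exhibit cancellation; the shadow complexity is $c=k+2l$ vertices. This gives
\[
TV_r(M_L(k,l);q)=\sum_{\gamma}\Bigl|C_r\sum_{\eta\in\mathrm{adm}(P,gl,\gamma)}|(P,gl)|_\gamma^\eta\Bigr|^2,
\]
and since $C_r$ does not depend on the link for $S^1$-bundles over closed surfaces, it contributes only a controlled subexponential factor.

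Next I would establish the upper bound $\limsup\tfrac{2\pi}{r}\log|TV_r|\le 2(k+2l)v_8$. Each of the $k+2l$ crossing points carries one $6j$-symbol, and Theorem \ref{growthrate6jThm}, Equation (\ref{upperbound6j}), bounds each by $v_8+O(\log r/r)$. The region factors $v_j^{\chi_t}$ and $\exp(2u_{\eta(X_t)}x_t')$ are each of magnitude at most polynomial in $r$ (using $|v_j|\le r$ and the uniform bound on $|u_j|$ together with the vanishing total gleam), the number of regions and crossings is fixed, and the number of admissible surface-colorings $\eta$ and the number of link colorings $\gamma$ are each polynomial in $r$. Taking logarithms, dividing by $r/2\pi$, and letting $r\to\infty$, every polynomial-in-$r$ factor dies, leaving at most $(k+2l)v_8$ per $RT_r$ term and hence $2(k+2l)v_8$ after squaring and summing. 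This matches $v_3\|M_L(k,l)\|=2(k+2l)v_8$ from Lemma \ref{LemAgolVol} and the additivity of simplicial volume under the torus gluings.

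For the lower bound I would exhibit a single well-chosen coloring $\gamma_0$ of $L$ and a single admissible surface-coloring $\eta_0$ whose contribution already grows like $e^{(k+2l)v_8\, r/\pi}$, so that — after checking there is no destructive interference that kills this leading term — the sum is bounded below accordingly. The natural choice is to color everything near the value $\tfrac{r-3}{2}$ or $\tfrac{r\pm1}{2}$ so that every $6j$-symbol appearing is (a symmetry image of) the ``all-$\tfrac{r-3}{2}$'' or ``all-$\tfrac{r\pm1}{2}$'' tetrahedron handled in Theorem \ref{growthrate6jThm}, Equation (\ref{sharp6j}), and Lemma \ref{modifiedq6jbound}, each contributing $v_8+O(\log r/r)$. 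I would use Theorem \ref{growthtrunctetra} part (1) — the sign of the summand $S_k$ is independent of $k$ — to guarantee that no internal cancellation occurs inside each individual $6j$-symbol, and then argue that for this maximizing coloring the admissible surface-colorings $\eta$ either are unique or all contribute with coherent phase, so $|RT_r(M_L(k,l),L,\gamma_0)|\doteq e^{(k+2l)v_8 r/2\pi}$. Squaring gives the matching lower bound, and combined with the upper bound this proves the limit equals $2(k+2l)v_8$.

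I expect the main obstacle to be the lower bound, specifically controlling the sum over admissible surface-colorings $\eta$: unlike the fundamental shadow links, here the shadows have regions with nontrivial Euler characteristic and several corners, the surface-coloring constraints couple the $S$- and $A$-pieces across the gluing tori, and one must rule out the possibility that distinct $\eta$'s (or the outer sum over $\gamma$) interfere destructively and cancel the expected exponential growth. Handling this will require a careful local analysis at each crossing together with a global argument — likely choosing the colorings on the gluing curves consistently and invoking the sign-coherence from Theorem \ref{growthtrunctetra}(1) piece by piece — to certify that a genuinely positive lower bound of the right exponential order survives.
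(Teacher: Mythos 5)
Your overall architecture matches the paper's: compute $TV_r$ as a sum of $|RT_r|^2$ over link colorings via Proposition \ref{TVRTProp}, realize $RT_r$ as a shadow state sum via Theorem \ref{RelRTshadowstatesum}, bound above using Theorem \ref{growthrate6jThm}, and bound below by restricting to the coloring $\gamma$ with all colors near $r/2$. The upper bound is essentially right, although your handling of the region factors is off: you appeal to ``cancellation'' from the total gleam being zero and allow the region factors to be ``at most polynomial in $r$.'' The paper's point is sharper and cleaner --- each individual region of the shadow is an annulus ($\chi_t=0$) with modified gleam $x_t' = x_t - z_t/2 = 0$, so every factor $v_{\eta(X_t)}^{\chi_t}\exp(2u_{\eta(X_t)}x_t')$ is identically $1$ and the state is exactly a product of $k+2l$ quantum $6j$-symbols (Proposition \ref{RelRTFormula}). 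Your parenthetical that the regions here have ``nontrivial Euler characteristic'' is actually false for this family; that misunderstanding is not fatal to the upper bound but should be corrected.

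The genuine gap is in the lower bound, and you correctly identify where it is but do not close it. Bounding $\bigl||(P,gl)|_{(n_r)}\bigr|$ from below by the absolute value of the single state with $\eta = (n_r)$ is only legitimate once you have ruled out destructive interference among the terms of the $\eta$-sum, and ``argue that $\eta$ either is unique or all contribute with coherent phase'' is a placeholder, not an argument --- the admissible surface-colorings are far from unique. Citing Theorem \ref{growthtrunctetra}(1) here is also not enough: that result controls the sign of $S_k$ within a single $6j$-symbol's internal sum, not the sign of the $6j$-symbol as the region colors $m$ vary, and for the $A$-piece symbols $(n_r,m_1,m_2,n_r,m_3,m_4)$ its hypotheses need not hold. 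The paper's actual solution is two lemmas you would need to discover and prove: Lemma \ref{Real6jlem} shows any $6j$-symbol of the form $(n_r,m_1,m_2,n_r,m_3,m_4)$ is real-valued, so each $A$-piece contributes the square of a real number and is automatically non-negative; and Lemma \ref{Allm6jsignlem} shows the sign of the $S$-piece symbol $(n_r,m,m,n_r,m,m)$ is independent of $m$ (requiring a separate check of the boundary cases $m = \tfrac{r-3}{4}$ and $m=\tfrac{3r-5}{4}$ when $r\equiv 3 \bmod 4$, where Theorem \ref{growthtrunctetra} does not apply). Together these give that every state $|(P,gl)|_{(n_r)}^\eta$ has the same sign, which is what justifies pulling out a single term. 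Without this sign analysis, your lower bound is unsupported.
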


To do this, we first write a formula for the shadow state sum invariants $|(P,gl)|_{\gamma}$ for the family $\mathcal{M}$ of links in trivial $S^1$-bundles over surfaces constructed in Subsection \ref{linkfamilySubSec}. We will then state and prove Lemma \ref{techlem} regarding the asymptotics of $|(P,gl)|_{\gamma}$. We complete the proof of Theorem \ref{mainthm} using Lemma \ref{techlem} and the formulation of the Turaev--Viro invariants from Subsection \ref{RelRTsubsec} in terms of the $r$-th relative Reshetikhin--Turaev invariants.

\begin{remark}
Computing the relative Reshetikhin--Turaev invariants is difficult in general, but the family $\mathcal{M}$ was constructed in order to simplify their calculation significantly from the shadow state sum perspective. In particular, shadows of these manifolds have simple gleams and topologically simple regions that allow us to reduce the proof of Theorem \ref{mainthmrestate} to studying properties of quantum $6j$-symbols. These manifolds also have well-understood simplicial volumes determined by $k$ and $l$.
\end{remark}

Let $M_L(k,l) \in \mathcal{M}$ be the complement of a link $L$ in a $3$-manifold $M = \Sigma_{g} \times S^1$ constructed as in Subsection \ref{linkfamilySubSec}. By Theorem \ref{canonShadow}, $M_L(k,l)$ has a shadow $(P,gl)$ associated to it. $M_L(k,l)$ has an elementary decomposition into $k$ $S-$pieces and $l$ $A-$pieces. The shadow $(P,gl)$ has a corresponding decomposition, so we also refer to these shadows on $\Sigma_{1,1}$ and $\Sigma_{0,4}$ as $S-$pieces and $A-$pieces, respectively. An $S-$piece has two loops which intersect at a single vertex. Let $s_1,\dots,s_k$ denote the intersection points on the $k$ $S-$pieces. An $A-$piece has two loops which intersect at two vertices. Let $(a_1^1,a_1^2),\dots, (a_{l}^1,a_{l}^2)$ denote the intersection points on the $l$ $A-$pieces. 

We make the following observations about $M_L(k,l)$:
\begin{itemize}
\item Each $S-$piece of $(P,gl)$ has two curves, one vertex, and one region $X$ with $z = 4$ corners as in Figure \ref{fig:SShadow}. This region has gleam $x=2$ since gleams are assigned to regions for trivial bundles as in Figure \ref{fig: GleamTrivial}. Cutting $\Sigma_{1,1}$ along one of the two curves produces a pair of pants such that the second curve becomes a simple arc connecting the two new boundary components. Cutting along this arc produces an annulus, so $X$ has Euler characteristic $\chi =0$.  The modified gleam of the region of this shadow is $x' = x - z/2 = 0$.

\item Each $A-$piece of $(P,gl)$ has two curves, two vertices, and four regions $X_t$, $t=1,2,3,4$, each with $z_t = 2$ corners, as in Figure \ref{fig:AShadow}. Again, using the gleam assignment from Figure \ref{fig: GleamTrivial}, each of the four regions has gleam $x_t=1$. Cutting $\Sigma_{0,4}$ along one of the two curves separates $\Sigma_{0,4}$ into two pairs of pants such that the second curve is split into a simple arc on each pair of pants with endpoints on a single boundary component. Cutting the two pairs of pants along these arcs produces four annuli, so $X_t$ has Euler characteristic $\chi_t =0$ for $t=1,2,3,4$. The modified gleam of each region of this shadow is $x_t' = x_t - z_t/2 = 0$.
\end{itemize}

\begin{figure}[!htb]
    \centering
    \begin{subfigure}[b]{0.3\textwidth}
        \centering
	    \includegraphics[width=0.6\textwidth]{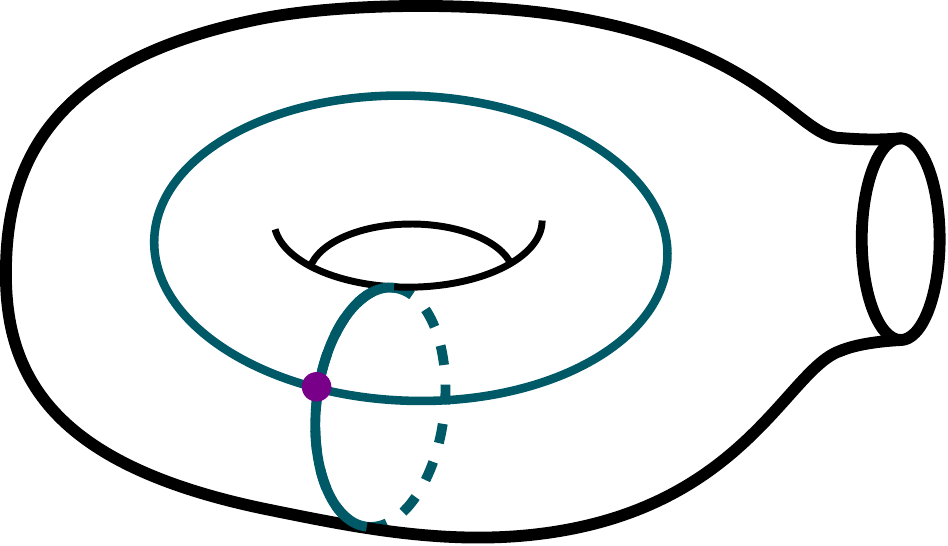}
	    \caption{Shadow corresponding to a pair of $S-$moves. The single region has gleam 2.}
	    \label{fig:SShadow}
	\end{subfigure}
	\hspace{1cm}
	\begin{subfigure}[b]{0.3\textwidth}
	    \centering
	    \includegraphics[width=0.6\textwidth]{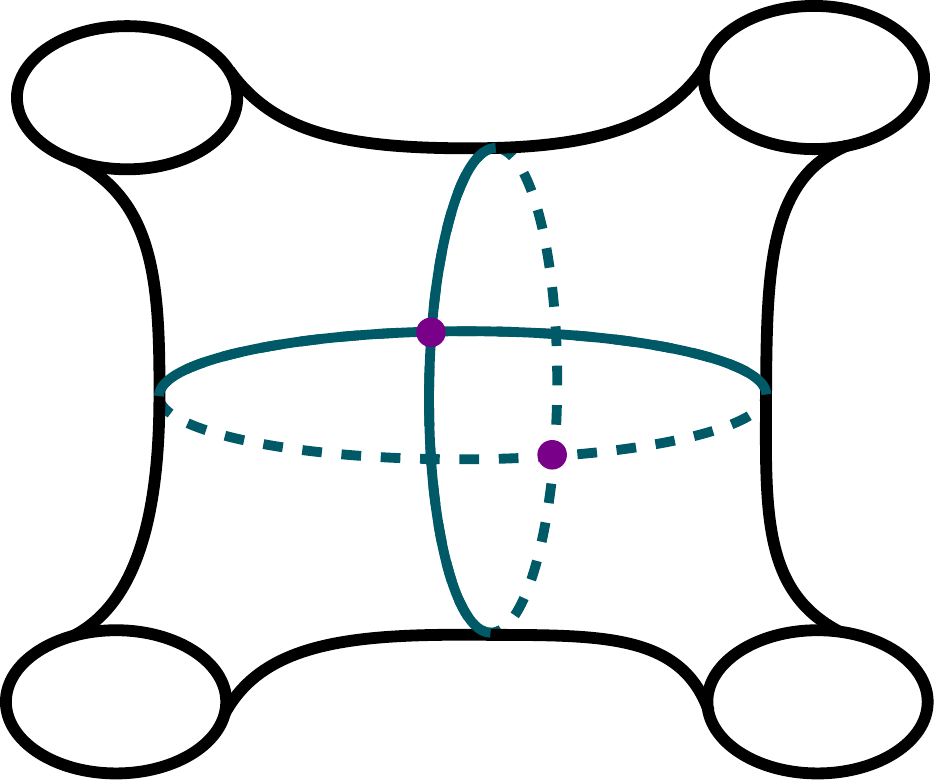}
	    \caption{Shadow corresponding to a pair of $A-$moves. Each region has gleam 1.}
	    \label{fig:AShadow}
	\end{subfigure}
	\caption{$S-$ and $A-$pieces.}
	\label{fig:smallShadows}
\end{figure}

\begin{remark}
While the underlying surface of the shadow $(P,gl)$ associated to $M_L(k,l)$ is closed, the underlying surfaces of the $S-$ and $A-$pieces have boundary. By the construction of $M_L(k,l)$, the annular regions of the $S-$ and $A-$pieces are glued along their boundaries to form the regions of $(P,gl)$. The gleam (resp. Euler characteristic) of a region $X$ of $(P,gl)$ is the sum of the gleams (resp. Euler characteristic) of the regions in the $S-$ and $A-$pieces that are glued to form $X$.
\end{remark}

Let $\gamma \in I_r^{2k+2l}$ be an $I_r$-coloring of $L \subset M$. The loops of the associated shadow $(P,gl)$ inherit this $I_r$-coloring $\gamma$. Let $\eta \in \text{adm}\left(P,gl,\gamma\right)$ be an admissible surface-coloring of $\left(P,gl,\gamma\right)$. The $S-$ and $A-$piece observations imply that for the state $|(P,gl)|_{\gamma}^{\eta}$ defined in Equation (\ref{state}), $\chi_t = x_t' = 0$ for all regions $X_t$. This means
\begin{align*}
\prod_{t=1}^q  \left(\left(v_{\eta(X_t)}\right)^{\chi_t} \text{exp}\left(2 u_{\eta(X_t)} x_t'\right)  \right) &= 1.
\end{align*} 
Using this and Equation (\ref{state}), we reformulate the state sum invariant of the $I_r$-colored shadow associated to $M_L(k,l)$ in the following proposition.

\begin{prop}\label{RelRTFormula}
Let $(P,gl,\gamma)$ be a colored shadow associated to $M_L(k,l)$. Then the shadow state sum invariant is given by 
\begin{align}
|(P,gl)|_{\gamma} &= \sum_{\eta \in \text{adm}(P,gl,\gamma)} |(P,gl)|_{\gamma}^{\eta} \nonumber\\
	&= \sum_{\eta \in \text{adm}(P,gl,\gamma)} \prod_{i = 1}^k |s_i|^{\eta}\prod_{j=1}^l |a_j^1|^{\eta}|a_j^2|^{\eta}. \label{sumof6j}
\end{align}
\end{prop}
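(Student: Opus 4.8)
The plan is to unwind the definition of the shadow state sum from Equation (\ref{statesum}) and simplify using the structural observations already established about the $S-$ and $A-$pieces of the shadow $(P,gl)$ associated to $M_L(k,l)$. Recall that by Definition of the shadow state sum, $|(P,gl)|_{\gamma} = \sum_{\eta \in \text{adm}(P,gl,\gamma)} |(P,gl)|_{\gamma}^{\eta}$, where each summand $|(P,gl)|_{\gamma}^{\eta}$ is the product given in Equation (\ref{state}), namely a product of quantum $6j$-symbols $|c_s|^{\eta}$ over the crossing points $c_s$ of $P$, times a product over all regions $X_t$ of the factors $\left(v_{\eta(X_t)}\right)^{\chi_t}\text{exp}\left(2u_{\eta(X_t)}x_t'\right)$. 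The first equality in the proposition is simply the definition of the state sum, so the content is in the second equality.

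First I would identify the crossing points of $P$. Since $M_L(k,l)$ decomposes into $k$ $S-$pieces and $l$ $A-$pieces, and an $S-$piece contributes exactly one crossing point $s_i$ while an $A-$piece contributes exactly two crossing points $a_j^1$ and $a_j^2$, the full crossing set of $P$ is $\{s_1,\dots,s_k\}\cup\{a_j^1,a_j^2 : j=1,\dots,l\}$. Hence the product $\prod_{s=1}^p |c_s|^{\eta}$ in Equation (\ref{state}) becomes exactly $\prod_{i=1}^k |s_i|^{\eta}\prod_{j=1}^l |a_j^1|^{\eta}|a_j^2|^{\eta}$. Then I would invoke the region computations established just before the proposition: the remark preceding the proposition notes that the gleam and Euler characteristic of each region $X$ of $(P,gl)$ is the sum of the gleams and Euler characteristics of the annular regions of the $S-$ and $A-$pieces glued to form $X$, and each such constituent region is an annulus with modified gleam $0$ and Euler characteristic $0$. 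Therefore every region $X_t$ of $(P,gl)$ has $\chi_t = 0$ and $x_t' = 0$, so every factor $\left(v_{\eta(X_t)}\right)^{\chi_t}\text{exp}\left(2u_{\eta(X_t)}x_t'\right) = v_{\eta(X_t)}^0\cdot \text{exp}(0) = 1$, and the entire region product equals $1$. Substituting both simplifications into Equation (\ref{state}) and summing over $\eta$ gives the claimed formula (\ref{sumof6j}).

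I do not anticipate a serious obstacle here, since the proposition is essentially a bookkeeping consequence of facts already in hand; the only point requiring a little care is making sure that the Euler characteristic and modified gleam of a \emph{glued} region of the closed shadow $(P,gl)$ are genuinely additive over the pieces being glued, and that the gluing of annuli along boundary circles again yields regions with $\chi = 0$ and $x' = 0$. This additivity is exactly the content of the remark immediately preceding the proposition, so I would cite that. The one subtlety worth a sentence is that the number of corners $z_t$ of a region of $(P,gl)$ is also the sum of the corner counts of the constituent pieces, which together with additivity of the raw gleam $x_t$ gives additivity of the modified gleam $x_t' = x_t - z_t/2$; since each constituent has $x' = 0$, so does the glued region. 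With that in place, the proof is a one-paragraph substitution.
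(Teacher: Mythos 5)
Your proposal is correct and follows essentially the same route as the paper: establish that every region of $(P,gl)$ has $\chi_t = 0$ and $x_t' = 0$ (using the piece-by-piece observations plus the additivity remark), so the region product in Equation (\ref{state}) collapses to $1$, and then identify the crossing set of $P$ with $\{s_1,\dots,s_k\}\cup\{a_j^1,a_j^2\}_{j=1}^l$ so the crossing product becomes the desired product of quantum $6j$-symbols. Your explicit note that additivity of the modified gleam requires additivity of both the raw gleam $x_t$ and the corner count $z_t$ is a small point the paper's remark leaves implicit, and it is a worthwhile clarification rather than a deviation.
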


Since Equation (\ref{sumof6j}) is a sum of products of quantum $6j$-symbols, Proposition \ref{RelRTFormula} allows us to use the explicit properties of quantum $6j$-symbols discussed in Subsection \ref{Q6jSubSec}. The following technical lemma will be used to prove Theorem \ref{mainthmrestate}.

\begin{remark}
We will use the abbreviation $(n):= (n,\dots, n)$ for tuples of colors throughout the rest of the paper.
\end{remark}

\begin{lem}\label{techlem}
Let $M_L(k,l) \in \mathcal{M}$ and $\gamma = (n_r) \in I_r^{2k+2l}$, where $n_r:= \frac{r - 1}{2}$ when $r \equiv 1 \mod 4$ and $n_r:= \frac{r -3}{2}$ when $r \equiv 3 \mod 4$. Let $(P,gl,\gamma)$ be the $I_r$-colored shadow  representing $M_L(k,l)$. Then  
\begin{align}\label{lemRTlim}
\lim_{r \rightarrow \infty} \frac{4\pi}{r} \log \left||(P,gl)|_{\left(n_r\right)}\right| = v_3 \|M_L(k,l)\|,
\end{align}
where $v_3\|M_L(k,l)\| = 2(k+2l)v_8$ is the simplicial volume of $M_L(k,l)$.
\end{lem}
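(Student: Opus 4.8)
The plan is to bound $\tfrac{4\pi}{r}\log\bigl||(P,gl)|_{(n_r)}\bigr|$ above and below by quantities tending to $2(k+2l)v_8 = v_3\|M_L(k,l)\|$. By Proposition~\ref{RelRTFormula},
\[
|(P,gl)|_{(n_r)} = \sum_{\eta\in\mathrm{adm}(P,gl,(n_r))} \ \prod_{i=1}^{k}|s_i|^{\eta}\prod_{j=1}^{l}|a_j^1|^{\eta}\,|a_j^2|^{\eta},
\]
a sum whose summands are products of exactly $k+2l$ quantum $6j$-symbols, one per crossing of the shadow (an $S$-piece has one crossing, an $A$-piece two). The shadow $(P,gl)$ has a number of regions $N=N(k,l)$ independent of $r$, so there are at most $(r-1)^{N}$ admissible surface-colorings. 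For the upper bound, apply Theorem~\ref{growthrate6jThm} to each of the $k+2l$ factors, so $\bigl||c_s|^{\eta}\bigr|\le\exp\!\bigl(\tfrac{r}{2\pi}(v_8+O(\tfrac{\log r}{r}))\bigr)$; together with the triangle inequality and the polynomial bound on the number of summands this gives $\bigl||(P,gl)|_{(n_r)}\bigr|\le (r-1)^{N}\exp\!\bigl(\tfrac{r}{2\pi}((k+2l)v_8+O(\tfrac{\log r}{r}))\bigr)$, hence $\tfrac{4\pi}{r}\log\bigl||(P,gl)|_{(n_r)}\bigr|\le 2(k+2l)v_8+O(\tfrac{\log r}{r})$.

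For the lower bound I would isolate the constant surface-coloring $\eta_0\equiv n_r$. Since $r\equiv 1\pmod 4$ makes $n_r=\tfrac{r-1}{2}$ even and $r\equiv 3\pmod 4$ makes $n_r=\tfrac{r-3}{2}$ even, for $r\ge 5$ the triple $(n_r,n_r,n_r)$ at each edge and the $6$-tuple $(n_r,\dots,n_r)$ at each crossing satisfy conditions (i)--(iii) of Definition~\ref{rAdmissible}, so $\eta_0$ is admissible, and its contribution to the state sum is
\[
\left|\left|\begin{array}{ccc} n_r & n_r & n_r \\ n_r & n_r & n_r \end{array}\right|\right|^{\,k+2l}.
\]
By Equation~(\ref{sharp6j}) of Theorem~\ref{growthrate6jThm} when $r\equiv 1\pmod 4$, and by Equation~(\ref{modifiedsharp6j}) of Lemma~\ref{modifiedq6jbound} when $r\equiv 3\pmod 4$, this equals $\exp\!\bigl(\tfrac{r}{2\pi}((k+2l)v_8+O(\tfrac{\log r}{r}))\bigr)$. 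Thus, provided the $\eta_0$-term is not cancelled by the remaining summands, $\tfrac{4\pi}{r}\log\bigl||(P,gl)|_{(n_r)}\bigr|\ge 2(k+2l)v_8 - O(\tfrac{\log r}{r})$, which with the upper bound completes the proof.

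The main obstacle is ruling out destructive interference in the sum over $\eta$. The summands that contribute at the dominant exponential order are exactly those whose region colors all lie within $o(r)$ of $\tfrac r2$, since by Theorem~\ref{growthtrunctetra}(2)--(3) a $6j$-symbol attains growth rate $v_8$ only when all six limiting dihedral angles $|\pi-\theta_i|$ vanish; this is still a polynomially large family, so one must control phases. The key is a parity constraint: because the loop colors $n_r$ are even, $r$-admissibility across every edge of $(P,gl)$ forces the two adjacent region colors to be congruent mod $2$, so on the connected shadow every admissible $\eta$ is either everywhere even or everywhere odd on regions. In either case each $6j$-symbol occurring in the state sum has even index sum $\sum a_i$ and is therefore real, and I would track its sign through the factor $\sqrt{-1}^{-\sum a_i}$, the product of $\Delta$-factors (whose sign is read off from the signs of the relevant quantum factorials), and the internal sum $\sum_k S_k$, which by Theorem~\ref{growthtrunctetra}(1) has constant sign in $k$ for the dominant colorings; reconciling these shows that all dominant summands share one sign, so $|(P,gl)|_{(n_r)}$ cannot fall below the order of its $\eta_0$-term. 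Executing this sign bookkeeping uniformly over the whole shadow is the delicate step; the remaining ingredients --- Proposition~\ref{RelRTFormula}, the polynomial count of colorings, and the $6j$-symbol asymptotics of Theorems~\ref{growthrate6jThm} and~\ref{growthtrunctetra} and Lemma~\ref{modifiedq6jbound} --- are routine.
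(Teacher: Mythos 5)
Your upper bound matches the paper's (triangle inequality, $6j$-symbol bound from Theorem~\ref{growthrate6jThm}, polynomial count of colorings), and your lower-bound outline — isolate the constant surface-coloring $\eta_0\equiv n_r$, invoke Equation~(\ref{sharp6j}) or Lemma~\ref{modifiedq6jbound}, then rule out cancellation — is the right shape. But the cancellation step is not carried out, and the sketch you give for it has a real gap. You propose to show that all \emph{dominant} summands share a sign; that does not control the sum, since the polynomially many subdominant terms could in principle sum coherently and overwhelm the single $\eta_0$ term. One needs sign consistency over \emph{all} admissible $\eta$, not just the dominant ones. Moreover, you restrict the application of Theorem~\ref{growthtrunctetra}(1) to ``dominant colorings,'' but the $6$-tuples arising in the state sum do not all satisfy hypotheses (a)--(b) of that theorem (the paper's Lemma~\ref{Allm6jsignlem} handles the boundary cases $m=\frac{r-3}{4}$ and $m=\frac{3r-5}{4}$ separately for exactly this reason). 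Finally, the remark that even $\sum a_i$ ``therefore'' makes the $6j$-symbol real is not quite right — the $\Delta$-factors still involve square roots that can be imaginary; realness for the relevant $6$-tuples has to be argued from the quantum-factorial signs, as in Lemma~\ref{Real6jlem}.

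What you're missing is the structural observation that lets the paper avoid all this delicate phase bookkeeping: by the $6j$-symmetries (\ref{q6jSymmetries}), the two crossings $a_j^1, a_j^2$ of an $A$-piece carry the \emph{same} $6j$-symbol, so each $A$-piece contributes a factor $\left|\begin{smallmatrix} n_r & m_1 & m_2 \\ n_r & m_3 & m_4 \end{smallmatrix}\right|^2$ — which, being the square of a real number (Lemma~\ref{Real6jlem}), is automatically nonnegative. The only sign that needs tracking is that of the $S$-piece factor $\left|\begin{smallmatrix} n_r & m & m \\ n_r & m & m \end{smallmatrix}\right|$, and Lemma~\ref{Allm6jsignlem} shows this sign is independent of $m$ (even in the two special cases). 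Hence every summand of Equation~(\ref{sumof6jspecific}) has the same sign $(\pm 1)^k$, so $|(P,gl)|_{(n_r)}$ equals the sum of absolute values and is at least the $\eta_0$ term. Without noticing the squaring of the $A$-piece contribution, you are left trying to synchronize the signs of general tuples $(n_r,m_1,m_2,n_r,m_3,m_4)$ across the whole shadow, which is considerably harder and which you do not carry through.
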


The following two lemmas give properties of quantum $6j$-symbols that will be leveraged to establish a lower bound for the limit in Equation (\ref{lemRTlim}) of Lemma \ref{techlem}.

\begin{lem}\label{Real6jlem}
Let $n_r:= \frac{r - 1}{2}$ when $r \equiv 1 \mod 4$ and $n_r:= \frac{r -3}{2}$ when $r \equiv 3 \mod 4$. Suppose $m_1,m_2,m_3,m_4 \in I_r$ and the tuple $(n_r, m_1,m_2,n_r, m_3,m_4)$ is $r$-admissible. Then the quantum $6j$-symbol
\begin{align*}
    \left|\begin{array}{ccc}
    n_r & m_1 & m_2 \\
    n_r & m_3 & m_4
    \end{array} \right|
\end{align*}
is real-valued.
\end{lem}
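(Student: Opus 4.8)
The plan is to trace the sign factor $\sqrt{-1}^{-\sum a_i}$ in Definition~\ref{q6jdef} together with the reality/imaginarity dichotomy noted right after that definition, namely that every quantum $6j$-symbol is either real or purely imaginary. Since $\Delta(a_1,a_2,a_3)$ is a square root of a ratio of quantum factorials and each quantum integer $[k]$ at $q=e^{2\pi\sqrt{-1}/r}$ is real, each $\Delta$ is either real or purely imaginary, and the bracketed summation $\sum_k S_k$ is manifestly real. Hence the only possible source of a nonreal value is the product $\sqrt{-1}^{-\sum_{i}a_i}\cdot\Delta(a_1,a_2,a_3)\Delta(a_1,a_5,a_6)\Delta(a_2,a_4,a_6)\Delta(a_3,a_4,a_5)$, and proving the symbol is real-valued amounts to showing this product is real for our particular $6$-tuple $(n_r,m_1,m_2,n_r,m_3,m_4)$.

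The key computational step is to determine the argument of each $\Delta$. First I would recall (or reprove in a line) the standard fact that for an $r$-admissible triple $(a,b,c)$, the sign/phase of $\Delta(a,b,c)$ is governed by the parity of $\frac{a+b+c}{2}$: writing $T=\frac{a+b+c}{2}$, one checks from the explicit product of quantum factorials, using $[k]=(-1)^{k+1}|[k]|$ for the relevant range of $k$, that $\Delta(a,b,c)^2$ has sign $(-1)^{T+1}$, so $\Delta(a,b,c)$ is real when $T$ is odd and purely imaginary when $T$ is even — more precisely $\Delta(a,b,c)\in \sqrt{-1}^{\,T+1}\mathbb{R}$. Applying this to the four faces of our $6$-tuple, with $T_1=\frac{n_r+m_1+m_2}{2}$, $T_2=\frac{n_r+m_3+m_4}{2}$, $T_3=\frac{m_1+n_r+m_4}{2}$, $T_4=\frac{m_2+n_r+m_3}{2}$ in the notation of Definition~\ref{q6jdef}, the product of the four $\Delta$'s lies in $\sqrt{-1}^{\,T_1+T_2+T_3+T_4}\mathbb{R}$ (the four $+1$'s cancel mod $4$). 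Note $T_1+T_2+T_3+T_4 = n_r + \frac{1}{2}\sum_{i=1}^6 a_i$ where $\sum a_i = 2n_r + m_1+m_2+m_3+m_4$, so $T_1+T_2+T_3+T_4 = 2n_r + m_1+m_2+m_3+m_4$, whence the product of $\Delta$'s lies in $\sqrt{-1}^{\,\sum a_i}\mathbb{R}$. Multiplying by the prefactor $\sqrt{-1}^{-\sum a_i}$ gives a real number, so the symbol is real-valued.

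The main obstacle, and the step I would be most careful with, is pinning down the sign of $[k]$ over the ranges of $k$ appearing both in $\Delta$ and, if needed, in $S_k$; at $q=e^{2\pi\sqrt{-1}/r}$ one has $[k]=\frac{\sin(2\pi k/r)}{\sin(2\pi/r)}$, and this is nonzero precisely because all arguments lie in the $r$-admissible range, but the sign oscillates, so the bookkeeping of $(-1)$'s in $\prod_{k=1}^m[k]$ versus $\left|\prod_{k=1}^m[k]\right|$ must be done honestly. An alternative, cleaner route that avoids most of this is to invoke only the already-recorded fact that each symbol is real or purely imaginary and then compute $\sum_{i=1}^6 a_i \bmod 4$ directly: with $a_1=a_4=n_r$ the sum is $2n_r + (m_1+m_2+m_3+m_4)$, and admissibility of the faces $(n_r,m_1,m_2)$ and $(n_r,m_3,m_4)$ forces $n_r+m_1+m_2$ and $n_r+m_3+m_4$ to be even, so $m_1+m_2 \equiv m_3+m_4 \equiv n_r \bmod 2$, giving $m_1+m_2+m_3+m_4$ even and $\sum a_i$ even; combined with a parity analysis of the four $\Delta$ phases as above, the prefactor exactly cancels the $\Delta$-phase. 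I would present the $\Delta$-phase computation as the heart of the argument and keep the quantum-integer sign lemma as a short preliminary observation, matching the style already used for Theorem~\ref{growthtrunctetra} and Lemma~\ref{modifiedq6jbound}.
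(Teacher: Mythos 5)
Your proposal follows the same structural blueprint as the paper's proof: isolate the phase carried by the prefactor $\sqrt{-1}^{-\sum a_i}$ and the four $\Delta$-factors, use $T_1+T_2+T_3+T_4=\sum a_i$, and show the phases cancel. However, there is a genuine error in the sign lemma you rely on. The claim $[k]=(-1)^{k+1}|[k]|$ is false at $q=e^{2\pi\sqrt{-1}/r}$: here $[k]=\sin(2\pi k/r)/\sin(2\pi/r)$ is positive for all $1\le k\le\frac{r-1}{2}$ and negative for $\frac{r+1}{2}\le k\le r-1$; it does not alternate with $k$. Consequently the derived statement that $\Delta(a,b,c)^2$ has sign $(-1)^{T+1}$ is also false. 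For a concrete counterexample, take $r=7$ and the admissible triple $(2,2,2)$, so $T=3$: your formula predicts $\Delta^2>0$, but $\Delta^2=1/[4]!$ with $[4]<0$, so $\Delta^2<0$ and $\Delta$ is purely imaginary.

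The correct sign for the triples arising here (where one entry is $n_r\approx r/2$) is $(-1)^{T+1-\frac{r-1}{2}}$: the paper establishes this by first checking that the three numerator arguments of $\Delta(n_r,m,m')$ are all $<\frac{r}{2}$ (so the numerator is positive), and then observing that the denominator $\bigl[\frac{n_r+m+m'}{2}+1\bigr]!$ has $T+1-\frac{r-1}{2}$ negative factors in the range $\bigl(\frac{r-1}{2},T+1\bigr]$. Your extra error $-\frac{r-1}{2}$ happens to cancel when you multiply four $\Delta$'s (it contributes $(-1)^{-4\cdot\frac{r-1}{2}}=1$), so your final conclusion lands in the right place for the wrong reason, which is exactly the kind of lucky cancellation one cannot rely on; the same formula applied to a single $\Delta$ gives the wrong answer. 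Your ``cleaner alternative route'' does not escape this either, since it explicitly defers to ``a parity analysis of the four $\Delta$ phases as above.'' To repair the argument you would need to (i) establish, as the paper does, that the numerator factorials of each $\Delta(n_r,m,m')$ are positive using the specific admissibility bounds forced by the entry $n_r$, and (ii) read off the denominator sign from the count of quantum integers past $\frac{r-1}{2}$, rather than asserting a general alternating-sign rule for $[k]$.
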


\begin{proof}
Let $n_r:= \frac{r - 1}{2}$ when $r \equiv 1 \mod 4$ and $n_r:= \frac{r -3}{2}$ when $r \equiv 3 \mod 4$. Note that $n_r$ is always even. Consider the quantum $6j$-symbol associated to the tuple $(n_r, m_1,m_2,n_r, m_3,m_4)$. From Definition \ref{q6jdef}, the quantum $6j$-symbol associated to the 6-tuple $(a_1,\dots, a_6)$ is either real or purely imaginary  based on the value of the coefficient 
\begin{align}\label{complexfactors}
\sqrt{-1}^{\left(-\sum_{i=1}^6 a_i\right)} \Delta(a_1,a_2,a_3)\Delta(a_1,a_5,a_6)
    \Delta(a_2,a_4,a_6)\Delta(a_3,a_4,a_5),
\end{align}
since the sum in Equation (\ref{q6j}) is real-valued.

For $(n_r, m_1,m_2,n_r, m_3,m_4)$, the first factor is given by
\begin{align*}
(\sqrt{-1})^{-\left(n_r + m_1+m_2+n_r+ m_3+m_4\right)} &= (\sqrt{-1})^{m_1+m_2+ m_3+m_4} = \pm 1.
\end{align*}
The first equality  holds because $n_r$ is even, so $2n_r$ has a factor of $4$. The second equality is due to the admissibility conditions which require that each of the sums $m_1+m_2$, $m_3+m_4$, $m_1+m_4$, and $m_2+m_3$ are even. Notice in the case of $(n_r, m,m,n_r, m,m)$, the factor becomes
\begin{align*}
    (\sqrt{-1})^{-\left(2n_r + 4m\right)} &= 1.
\end{align*}

For the other factors of Equation (\ref{complexfactors}), suppose $(n_r,m,m')$ is an $r$-admissible triple and without loss of generality, assume $m\geq m'$. Then 
\[
\Delta(n_r,m,m') = \sqrt{\frac{\left[\frac{n_r+m-m'}{2}\right]! \left[\frac{m'+n_r-m}{2}\right]! \left[\frac{m+m'-n_r}{2}\right]!}{\left[\frac{n_r+m+m'}{2} +1 \right]!}}.
\]
By the admissibility conditions, $\frac{n_r+m-m'}{2} \leq n_r < \frac{r}{2}$, $\frac{n_r+m'-m}{2} \leq \frac{n_r}{2} < \frac{r}{2}$, and $\frac{m+m'-n_r}{2} \leq r-2-n_r < \frac{r}{2}$. Since $[n]>0$ for $0\leq n <\frac{r}{2}$, the numerator of $\Delta(n_r,m,m')$ is real-valued. This implies the numerator of $\displaystyle \Delta(n_r,m_1,m_2)\Delta(n_r,m_3,m_4)\Delta(m_1,n_r,m_4)\Delta(m_2,n_r,m_3)$ is also real-valued.
In addition, the admissibility conditions imply that $\frac{r-1}{2} \leq n_r+1 \leq \frac{n_r+m+m'}{2}+1 \leq r-1$, so the sign of $\left[\frac{n_r+m+m'}{2}+1 \right]!$
is given by 
\[
\left( -1\right)^{\frac{n_r+m+m'}{2}+1-\frac{r-1}{2}}.
\]
This means the denominator of $\Delta(n_r,m_1,m_2)\Delta(n_r,m_3,m_4)\Delta(m_1,n_r,m_4)\Delta(m_2,n_r,m_3)$ is some real-valued multiple of
\begin{align*}
& \sqrt{\left( -1\right)^{2n_r+4-2(r-1)+m_1+m_2+m_3+m_4}} = \pm 1,
\end{align*}
where equality holds because $2n_r+4-2(r-1)$ contains a factor of 4 and $m_1+m_2+m_3+m_4$ is even. Hence, the coefficient given by Equation (\ref{complexfactors}) is real-valued. This implies that the quantum $6j$-symbol associated to the $6$-tuple $(n_r, m_1,m_2,n_r, m_3,m_4)$ is real-valued. Notice in the case of $(n_r, m,m,n_r, m,m)$, the coefficient given by Equation (\ref{complexfactors}) is positive.
\end{proof}

\begin{lem}\label{Allm6jsignlem}
Let $n_r:= \frac{r - 1}{2}$ when $r \equiv 1 \mod 4$ and $n_r:= \frac{r -3}{2}$ when $r \equiv 3 \mod 4$. Let $m \in I_r$ and suppose that the tuple $(n_r, m,m,n_r, m,m)$ is $r$-admissible. Then the sign of
\begin{align*}
    \left|\begin{array}{ccc}
    n_r & m & m \\
    n_r & m & m
    \end{array} \right|
\end{align*}
is independent of $m$. Moreover, it is positive when $r \equiv 3 \mod 4$ and negative when $r \equiv 1 \mod 4$.
\end{lem}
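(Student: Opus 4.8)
The plan is to reduce the question to the sign of the alternating sum that appears in the definition (\ref{q6j}) of the quantum $6j$-symbol, and then to show that every nonzero summand of that sum carries the same sign.

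First I would record that, since $n_r$ is even in both congruence classes, the quantity $T := \tfrac{n_r}{2} + m$ is an integer, and that for the $6$-tuple $(n_r,m,m,n_r,m,m)$ all four of the $T_i$ equal $T$, while $Q_1 = Q_2 = n_r + m$ and $Q_3 = 2m$. Thus the summation index of (\ref{q6j}) runs over $k = T, T+1, \dots, T+N$ with $N := \min\{\tfrac{n_r}{2},\, m - \tfrac{n_r}{2}\} \ge 0$, and on writing $k = T + i$ the summand $S_k$ becomes
\[
S_{T+i} = \frac{(-1)^{T+i}\,[T+i+1]!}{\bigl([i]!\bigr)^{4}\,\bigl([\tfrac{n_r}{2}-i]!\bigr)^{2}\,[m-\tfrac{n_r}{2}-i]!}.
\]
By the computation in the proof of Lemma \ref{Real6jlem}, the prefactor (\ref{complexfactors}) attached to the tuple $(n_r,m,m,n_r,m,m)$ is a positive real number, so it is enough to determine the sign of $\sum_{i=0}^{N} S_{T+i}$.

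Next I would exploit the two relevant $r$-admissibility inequalities, $n_r \le 2m$ and $n_r + 2m \le 2(r-2)$, together with the explicit value $n_r = \tfrac{r-1}{2}$ or $n_r = \tfrac{r-3}{2}$. These give, for every $0 \le i \le N$, that each of $i$, $\tfrac{n_r}{2}-i$, and $m - \tfrac{n_r}{2} - i$ lies in $\{0,1,\dots,\tfrac{r-1}{2}\}$; since $[l] > 0$ for $0 \le l < r/2$, the three quantum factorials in the denominator of $S_{T+i}$ are all strictly positive. The same inequalities yield $\tfrac{r-1}{2} \le n_r + 1 \le T + 1 \le r-1$, so the $i=0$ term is nonzero and $T+i+1 \ge \tfrac{r-1}{2}$ for all $i$. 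Now for those $i$ with $T+i+1 \le r-1$, the factorial $[T+i+1]!$ is a product in which precisely the factors $[l]$ with $\tfrac{r+1}{2} \le l \le T+i+1$ are negative, so $\operatorname{sign}[T+i+1]! = (-1)^{\,T+i+1-\frac{r-1}{2}}$ and hence
\[
\operatorname{sign} S_{T+i} \;=\; (-1)^{T+i}\,(-1)^{\,T+i+1-\frac{r-1}{2}} \;=\; (-1)^{\,1+\frac{r-1}{2}},
\]
which is independent of both $i$ and $m$; for those $i$ with $T+i+1 \ge r$ one has $[T+i+1]! = 0$, so $S_{T+i} = 0$. Therefore $\sum_{i=0}^{N} S_{T+i}$ is a sum of terms all of the common sign $(-1)^{1+\frac{r-1}{2}}$, of which at least one (the $i=0$ term) is nonzero, and so its sign is $(-1)^{1+\frac{r-1}{2}}$.

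Finally I would note that $\tfrac{r-1}{2}$ is even when $r \equiv 1 \bmod 4$ and odd when $r \equiv 3 \bmod 4$; combined with the positivity of the prefactor this shows that the $6j$-symbol is negative for $r \equiv 1 \bmod 4$ and positive for $r \equiv 3 \bmod 4$, in particular independent of $m$. I expect the main obstacle to be the bookkeeping of the middle paragraph: one has to verify, purely from the admissibility inequalities and the two values of $n_r$, that all the factorial arguments in the denominator remain in the positive range $\{0,\dots,\tfrac{r-1}{2}\}$, that $T+1 \le r-1$ so the leading summand survives, and that $T+i+1 \ge \tfrac{r-1}{2}$ so the sign formula for $[T+i+1]!$ applies, while remaining untroubled by the fact that some trailing summands may vanish.
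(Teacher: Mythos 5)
Your proof is correct, and it takes a genuinely different and in some ways cleaner route than the paper's. The paper relies on part $(1)$ of Theorem \ref{growthtrunctetra} (Theorem A.1 of \cite{growth6j}) to assert that the sign of $S_k$ is independent of $k$, and then evaluates that common sign at the smallest index $k = m + \tfrac{n_r}{2}$; but because the hypotheses (a), (b) of Theorem \ref{growthtrunctetra} fail for the tuple $(n_r,m,m,n_r,m,m)$ when $r\equiv 3\bmod 4$ at the two extreme admissible colors $m = \tfrac{r-3}{4}$ and $m = \tfrac{3r-5}{4}$, the paper must treat those as separate special cases and verify the sign by direct computation. You instead bypass Theorem \ref{growthtrunctetra} entirely: from the admissibility inequalities $n_r \le 2m$ and $n_r + 2m \le 2(r-2)$ you show that each denominator argument $i$, $\tfrac{n_r}{2}-i$, $m-\tfrac{n_r}{2}-i$ lies in $\{0,\dots,\tfrac{r-1}{2}\}$ so the denominator quantum factorials are strictly positive, that $\tfrac{r-1}{2}\le T+1\le r-1$ so the $i=0$ term is nonvanishing and the sign formula $\operatorname{sign}[T+i+1]! = (-1)^{T+i+1-\frac{r-1}{2}}$ is valid for every surviving $i$, and that larger $i$ give $[T+i+1]!=0$. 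The net sign of every nonzero summand is then $(-1)^{1+\frac{r-1}{2}}$, independent of both $i$ and $m$, matching the paper's sign $(-1)^{-\frac{r-3}{2}}$ (these agree since the exponents differ by $2$). The payoff of your approach is that it is self-contained and uniform across all admissible $m$, eliminating the case analysis; the payoff of the paper's approach is that it makes the connection to the geometric content of Theorem \ref{growthtrunctetra} explicit. Both are valid, and both hinge on the same elementary ingredient --- the sign of $[k+1]!$ as a function of $k$ --- together with the positivity of the prefactor established in Lemma \ref{Real6jlem}.
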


\begin{proof}
Consider the quantum $6j$-symbol associated to the 6-tuple $(n_r, m,m,n_r, m,m)$. By Lemma \ref{Real6jlem}, this quantum $6j$-symbol is real-valued. The admissibility conditions imply that $\frac{n_r}{2} \leq m \leq r-2- \frac{n_r}{2}$. By Definition \ref{q6jdef},
\begin{align}\label{Allm6j}
	\left| \begin{array}{ccc}	
	n_r & m & m \\
    n_r & m & m
    \end{array} \right| &=
    \Delta\left(n_r,m,m\right)^4 
    \left(\sum_{k = m+\frac{n_r}{2}}^{\text{min}\{m+n_r,2m\}} S_{m,k}\right)
\end{align}
where 
\begin{align*}
S_{m,k} = \frac{(-1)^k [k+1]!}
{\left[k-\left(m+ \frac{n_r}{2}\right)\right]!^4 \left[m+n_r-k\right]!^2\left[2m-k\right]!}.
\end{align*}
Suppose that $r \equiv 1 \mod 4$. By the admissibility conditions, the 6-tuple $(\frac{r-1}{2}, m,m,\frac{r-1}{2}, m,m)$ satisfies assumptions $(a)$ and $(b)$ of Theorem \ref{growthtrunctetra}. In the case that $r \equiv 3 \mod 4$, the admissibility conditions of the 6-tuple $(\frac{r-3}{2}, m,m,\frac{r-3}{2}, m,m)$ imply it satisfies  assumptions $(a)$ and $(b)$ of Theorem \ref{growthtrunctetra} for all admissible region colors except $m = \frac{n_r}{2} = \frac{r-3}{4}$ and $m = r-2-\frac{n_r}{2} =\frac{3r-5}{4}$. We will consider these cases separately.

\vspace{0.1cm}
\textbf{General Case:}

Suppose either $r \equiv 1 \mod 4$ or $r\equiv 3 \mod 4$ with $\frac{r-3}{4} <m< \frac{3r-5}{4}$. Then by part $(1)$ of Theorem \ref{growthtrunctetra}, the sign of $S_{m,k}$ is independent of $k$. We now show that the sign of $S_{m,k}$ is independent of the region color $m$. Without loss of generality, consider the case $k= m+\frac{n_r}{2}$:
\[
S_{m, m+\frac{n_r}{2}} = \frac{(-1)^{m+\frac{n_r}{2}} [m+\frac{n_r}{2}+1]!}
{\left[\frac{n_r}{2}\right]!^2\left[m-\frac{n_r}{2}\right]!}.
\]
Since the quantum integer $[n]$ is real-valued, we only need to consider the signs of $[m+\frac{n_r}{2}+1]!$ and $\left[m-\frac{n_r}{2}\right]!$. By assumption $(a)$ of Theorem \ref{growthtrunctetra} and the assumption that $m<r-2-\frac{n_r}{2}$, we know $0\leq m-\frac{n_r}{2}\leq \frac{r-2}{2}$, so $\left[m-\frac{n_r}{2}\right]! >0$ for all region colors $m$. By assumption $(b)$ of Theorem \ref{growthtrunctetra} and the assumption that $m>\frac{n_r}{2}$, we know $\frac{r-2}{2} \leq m+\frac{n_r}{2} \leq
r-2$, so $[m+\frac{n_r}{2}+1] <0$ for all region colors $m$. Note that $\left[\frac{r - 1}{2}\right]! > 0$, so $[m+\frac{n_r}{2}+1]! = [m+\frac{n_r}{2}+1] \cdots [\frac{r + 1}{2}][\frac{r - 1}{2}]!$ has sign
\[
(-1)^{m+\frac{n_r}{2}+1-\frac{r-1}{2}}.
\]
Then the sign of $S_{m, m+\frac{n_r}{2}}$ is 
\begin{align}\label{signofSk}
(-1)^{m+\frac{n_r}{2}+m+\frac{n_r}{2}+1-\frac{r-1}{2}} = (-1)^{-\frac{r-3}{2}},
\end{align}
which is independent of the region color $m$. By part $(1)$ of Theorem \ref{growthtrunctetra} and Equation (\ref{Allm6j}), the sign of the quantum $6j$-symbol associated to $(n_r, m,m,n_r, m,m)$ is independent of the region color $m$, provided $m \neq \frac{r-3}{4}, \frac{3r-5}{4}$ in the case $r\equiv 3 \mod 4$. 

\vspace{0.1cm}
\textbf{Special Cases:}

If $m = \frac{r-3}{4}$, we have $\max{T_i} = \frac{r-3}{2} = \min{Q_j}$, so 
\begin{align*}
	\left| \begin{array}{ccc}	
	\frac{r-3}{2} & \frac{r-3}{4} & \frac{r-3}{4} \\
    \frac{r-3}{2} & \frac{r-3}{4} & \frac{r-3}{4}
    \end{array} \right| &=
    \Delta\left(\frac{r-3}{2},\frac{r-3}{4},\frac{r-3}{4}\right)^4 
   \left(\frac{(-1)^{\frac{r-3}{2}}\left[\frac{r-3}{2}+1\right]!}{\left[ \frac{r-3}{4} \right]!^2}\right) >0.
\end{align*}
Positivity follows because $n_r = \frac{r-3}{2}$ is even and $0<\frac{r-3}{2}+1<\frac{r}{2}$.

\vspace{0.1cm}
If $m = \frac{3r-5}{4}$, we have $\max{T_i} = r-2$ and $\min{Q_j} = r-2 + \frac{r-3}{4}$. However, since $[k+1]! = 0$ for $k>r-2$,
\begin{align*}
	\left| \begin{array}{ccc}	
	\frac{r-3}{2} & \frac{3r-5}{4} & \frac{3r-5}{4} \\
    \frac{r-3}{2} & \frac{3r-5}{4} & \frac{3r-5}{4}
    \end{array} \right| &=
    \Delta\left(\frac{r-3}{2},\frac{3r-5}{4},\frac{3r-5}{4}\right)^4 
   \left(\frac{(-1)^{r-2}\left[r-1\right]!}{\left[ \frac{r-3}{4} \right]!^2 \left[\frac{r-1}{2} \right]!}\right) >0.
\end{align*}
Positivity follows because the denominator is positive and the numerator is given by $(-1)^{r-2} [r-1][r]\cdots \left[\frac{r+1}{2}\right]\left[\frac{r-1}{2}\right]!$, which has sign 
\[
(-1)^{r-2+\left(r-1-\frac{r-1}{2}\right)} = (-1)^{\frac{3r-5}{2}} = 1.
\]
Thus the sign of the quantum $6j$-symbol associated to $(n_r, m,m,n_r, m,m)$ is independent of $m$. Moreover, by Equation (\ref{signofSk}), this $6j$-symbol is negative when $r \equiv 1 \mod 4$ and positive when $r\equiv 3 \mod 4$.
\end{proof}

We now prove Lemma \ref{techlem}. 
\begin{proof}[Proof of Lemma \ref{techlem}]
From Subsection \ref{linkfamilySubSec}, the link complement $M_L(k,l)$ is a compact orientable $3$-manifold with simplicial volume $v_3\|M_L(k,l)\| = 2(k+2l)v_8$. We proceed by bounding the limit in Equation (\ref{lemRTlim}) above and below by $v_3\|M_L(k,l)\|$. 

\vspace{0.2cm}
\textbf{\underline{Step 1: The upper bound}}

For the upper bound, note that each summand in Equation (\ref{sumof6j}) is a product of $k+2l$ quantum $6j$-symbols. By Theorem \ref{growthrate6jThm}, the growth rate of a single summand of Equation (\ref{sumof6j}) is bounded above sharply by $(k+2l)v_8$. Let $B_r = \#adm(P,gl,\gamma)$ be the number of $r$-admissible surface-colorings of $(P,gl,\gamma)$. The term $B_r$ grows at most polynomially with $r$ since $B_r$ is bounded above by the total number of $r$-admissible 6-tuples corresponding to well-defined quantum $6j$-symbols. Thus, we obtain the following upper bound:
\begin{align*}
\limsup_{r \rightarrow \infty} \frac{4\pi}{r} \log \left||(P,gl)|_{\gamma}\right| & \leq
    \limsup_{r \rightarrow \infty} \frac{4\pi}{r} \log \left|B_r \max_{\eta \in adm(P,gl,\gamma)}|(P,gl)|_{\gamma}^{\eta}\right| \\
    & = 2 \limsup_{r \rightarrow \infty} \frac{2\pi}{r} \log \left| \max_{\eta \in adm(P,gl,\gamma)}|(P,gl)|_{\gamma}^{\eta}\right| \\
    &\leq 2(k+2l)v_8,
\end{align*}
where the last inequality is due to Theorem \ref{growthrate6jThm}.

We remark that this upper bound holds for any $I_r$-coloring $\gamma$, which implies the required upper bound for the limit in Equation (\ref{lemRTlim}). 

\vspace{0.2cm}
\textbf{\underline{Step 2: The lower bound}}

Let $n_r:= \frac{r - 1}{2}$ when $r \equiv 1 \mod 4$ and $n_r:= \frac{r -3}{2}$ when $r \equiv 3 \mod 4$. We will prove the lower bound for the $\left(n_r\right)$-colored link. 

For the lower bound, it suffices to show that summands of Equation (\ref{sumof6j}) do not cancel with each other when $\gamma= \left(n_r \right)$. In particular, we will show that, for fixed $r$, the sign of every summand of Equation (\ref{sumof6j}) is independent of the surface-coloring $\eta \in \text{adm}\left(P,gl,\left(n_r \right)\right)$. This means that the absolute value of any individual summand is a lower bound for $\left||(P,gl)|_{\left(n_r\right)}\right|$. We now make some observations about $ \prod_{i = 1}^k |s_i|^{\eta}\prod_{j=1}^l |a_j^1|^{\eta}|a_j^2|^{\eta}$.
\begin{itemize}
\item The surface-coloring of each $S-$piece is given by Figure \ref{fig: SShadowAreaColored}. This means each factor $|s_i|^{\eta}$ is the quantum $6j$-symbol associated to the 6-tuple $(n_r, m,m,n_r, m,m)$.

\item The surface-coloring of each $A-$piece is given by Figure \ref{fig: AShadowAreaColored}. Using the symmetries of the quantum $6j$-symbol in Equation (\ref{q6jSymmetries}), each factor $|a_j^i|^{\eta}$, for $i=1,2$, is the quantum $6j$-symbol associated to the 6-tuple $(n_r, m_1,m_2,n_r, m_3,m_4)$.
\end{itemize}

\begin{figure}[!htb]
    \centering
    \begin{subfigure}[b]{0.3\textwidth}
        \centering
	    \includegraphics[width=0.6\textwidth]{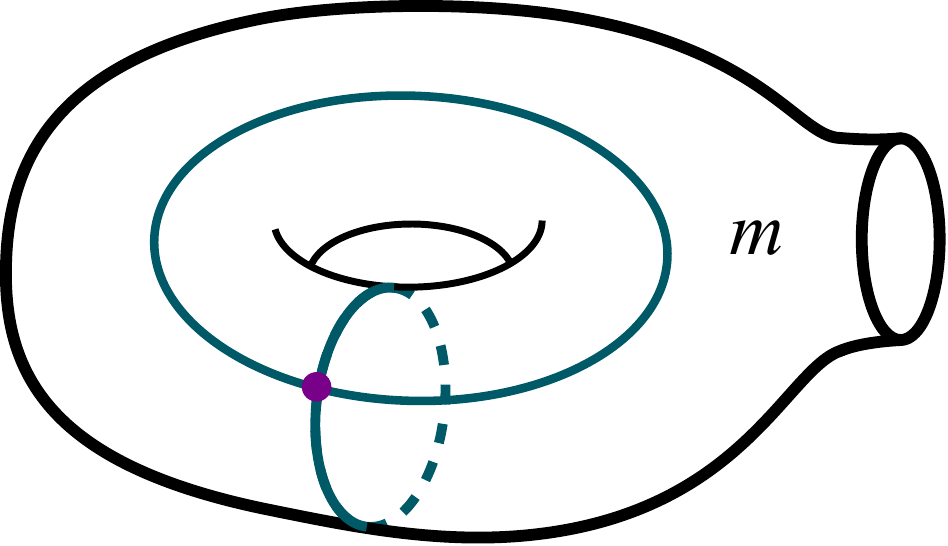}
	    \caption{Surface-coloring of a shadow corresponding to a pair of $S-$moves.}
	    \label{fig: SShadowAreaColored}
	\end{subfigure}
	\hspace{1cm}
	\begin{subfigure}[b]{0.3\textwidth}
	    \centering
	    \includegraphics[width=0.6\textwidth]{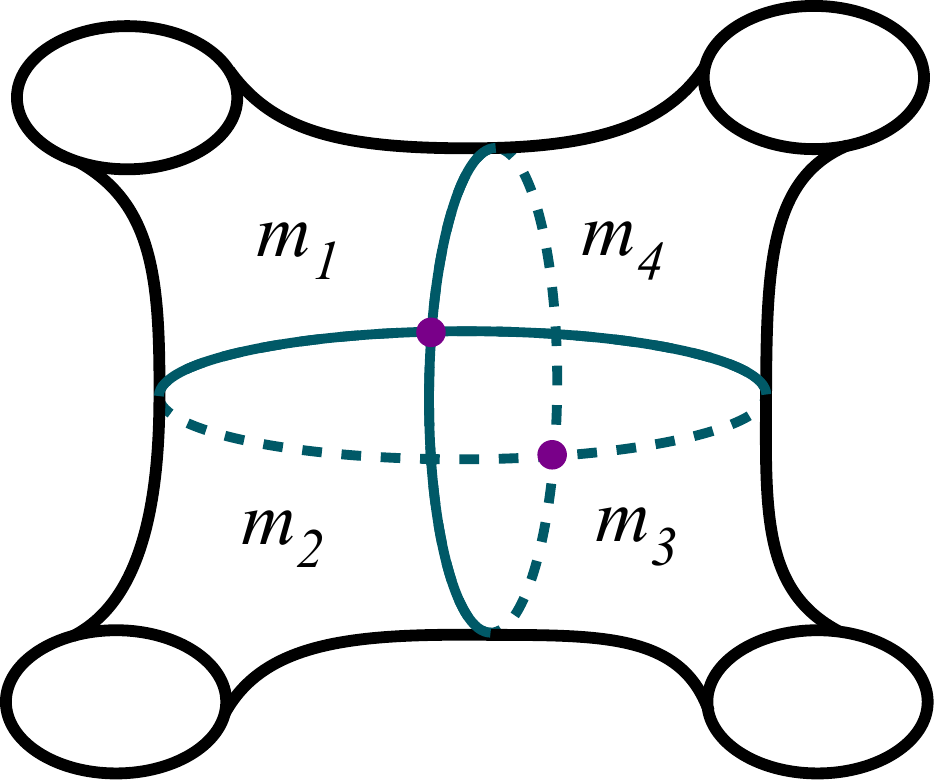}
	    \caption{Surface-coloring of a shadow corresponding to a pair of $A-$moves.}
	    \label{fig: AShadowAreaColored}
	\end{subfigure}
	\caption{Surface-colored shadows.}
	\label{fig:smallShadowsAreaColored}
\end{figure}

Using these observations, we can re-formulate Equation (\ref{sumof6j}) for the $\left(n_r\right)$-colored shadow:
\begin{align}\label{sumof6jspecific}
|(P,gl)|_{\left(n_r\right)} &= \sum_{\eta \in \text{adm}\left(P,gl,\left(n_r \right)\right)} 
	\prod_{i = 1}^k \left| \begin{array}{ccc}
    n_r & m^i & m^i \\
    n_r & m^i & m^i
    \end{array} \right| 
    \prod_{j=1}^l \left| \begin{array}{ccc}
    n_r & m_1^j & m_2^j \\
    n_r & m_3^j & m_4^j
    \end{array} \right|^2
\end{align}

We remark that the notation of Equation (\ref{sumof6jspecific}) is chosen out of convenience. The construction of the invariant may introduce dependencies between surface-colors and hence between entries of the quantum $6j$-symbols. For example, if an $S-$piece with region colored by $m^i$ is glued to an $A-$piece along the boundary circle adjacent to a region colored by $m^j$, the regions combine to form a single region with color $m^i = m^j$.
We choose to omit these additional details since they do not change the overall result of Lemma \ref{techlem}.

By Lemma \ref{Real6jlem}, the quantum $6j$-symbols of the form $(n_r, m_1,m_2,n_r, m_3,m_4)$ are real-valued. This means
\begin{align*}
     \prod_{j=1}^l \left| \begin{array}{ccc}
    n_r & m_1^j & m_2^j \\
    n_r & m_3^j & m_4^j
    \end{array} \right|^2
\end{align*}
is non-negative and implies that the sign of the summand of Equation (\ref{sumof6jspecific}) is determined by the quantum $6j$-symbols associated to the $S-$pieces
\begin{align}\label{monly6j}
\prod_{i = 1}^k \left| \begin{array}{ccc}
    n_r & m^i & m^i \\
    n_r & m^i & m^i
    \end{array} \right|.
\end{align}
By Lemma \ref{Allm6jsignlem}, the sign of each factor in Expression (\ref{monly6j}) is independent of the surface-colorings $m^i$, for $i\in \{1,\dots, k\}$, of the $S-$pieces. 

Therefore, the sign of $|(P,gl)|_{\left(n_r\right)}^{\eta}$ is independent of the surface-coloring $\eta$. From this, we can conclude
\begin{align}\label{newPglformula}
   \left||(P,gl)|_{\left(n_r\right)}\right| &= \left|\sum_{\eta \in adm\left(P,gl,n_r\right)}
   |(P,gl)|_{\left(n_r\right)}^{\eta}\right| \nonumber\\
   &= \sum_{\eta \in adm\left(P,gl,n_r\right)}
   \left||(P,gl)|_{\left(n_r\right)}^{\eta}\right|. 
\end{align}
In fact, since the number of $S-$pieces $k$ is even, every summand of Equation (\ref{sumof6jspecific}) is non-negative, though Equation (\ref{newPglformula}) is sufficient for our purposes.

We can bound the sum in Equation (\ref{newPglformula}) below by the absolute value of a single state $|(P,gl)|_{\left(n_r\right)}^{\eta}$. In particular, we consider the bound obtained from the surface-coloring $\eta = \left(n_r\right)$. This gives us the inequality
\begin{align*}
\liminf_{r \rightarrow \infty} \frac{4\pi}{r} \log \left||(P,gl)|_{\left(n_r\right)}\right| & \geq 
\lim_{r \rightarrow \infty} \frac{4\pi}{r} \log 
\left|\left| \begin{array}{ccc}
    n_r & n_r & n_r \\
    n_r & n_r & n_r
    \end{array} \right|^{k+2l}\right| \\
    &= 2(k+2l)v_8,
\end{align*}
where the equality is due to Equation (\ref{modifiedsharp6j}) in Lemma \ref{modifiedq6jbound}. This means the growth rate of the shadow state sum invariant is bounded below by the simplicial volume $v_3\|M_L(k,l)\| = 2(k+2l)v_8$, establishing the lemma.
\end{proof}

We can now prove Theorem \ref{mainthmrestate}.
\begin{proof}[Proof of Theorem \ref{mainthmrestate}]
Fix a root of unity $q = e^{\frac{2\pi \sqrt{-1}}{r}}$. Let $M = \Sigma_{g}\times S^1$ be a trivial $S^1$-bundle over an orientable closed surface $\Sigma_g$, and let $L\subset M$ be a $2k+2l$ component link such that $M_L(k,l) \in \mathcal{M}$. Suppose $L$ is colored by $\gamma \in I_r^{2k+2l}$, and consider the $I_r$-colored shadow $(P,gl,\gamma)$ associated to $M_L(k,l)$. 

We begin by formulating $TV_r(M_L(k,l);q)$ in terms of the shadow state sum invariant.
By Proposition \ref{TVRTProp} and Theorem \ref{RelRTshadowstatesum}, the Turaev--Viro invariant of $M_L(k,l)$ is given by
\begin{align*}
TV_r(M_L(k,l
);q) &= \sum_{\gamma \in I_r^{2k+2l}} \left|RT_r(M,L,\gamma)\right|^2 \\
	&= \sum_{\gamma \in I_r^{2k+2l}} \left|C_r |(P,gl)|_{\gamma}\right|^2.
\end{align*}

We start with the upper bound, which is proven analogously to the upper bound in Lemma \ref{techlem}. Let $B_r' := (\# I_r)^{2k+2l}$ be the number of link colorings. Both $|C_r|$ and $B_r'$ grow at most polynomially with $r$, so we obtain the following bound. See Theorem 3.3 of \cite{costantinoColoredJones} where $C_r$ is written as a product of terms which grow at most polynomially. 
\begin{align*}
\limsup_{r \rightarrow \infty} \frac{2\pi}{r} \log \left|TV_r(M_L(k,l);q)\right| 
 &= \limsup_{r \rightarrow \infty} \frac{2\pi}{r} \log \left|\sum_{\gamma \in I_r^{2k+2l}}
 \left| |(P,gl)|_{\gamma}\right|^2\right| \\
 &\leq 2 \limsup_{r \rightarrow \infty} \max_{\gamma \in I_r^{2k+2l}} \frac{2\pi}{r} \log \left||(P,gl)|_{\gamma}\right|.
\end{align*}
Since $|(P,gl)|_{\gamma}$ is a product of $k+2l$ quantum $6j$-symbols, we obtain the following bound using Theorem \ref{growthrate6jThm}. 
\begin{align*}
\limsup_{r \rightarrow \infty} \frac{2\pi}{r} \log \left|TV_r(M_L(k,l);q)\right| 
 &\leq 2(k+2l)v_8.
\end{align*}

We now focus on the lower bound. Since all summands are positive, we can bound the absolute value of the sum below by the absolute value of an individual summand. In particular, we consider the bound obtained from the summand corresponding to the $I_r$-coloring $\gamma = \left(n_r \right)$. This gives us the following inequality:
\begin{align*}
\liminf_{r \rightarrow \infty} \frac{2\pi}{r} \log \left|TV_r(M_L(k,l);q)\right| & \geq
\liminf_{r \rightarrow \infty} \frac{2\pi}{r} \log \left|\left|C_r|(P,gl)|_{\left(n_r\right)}\right|^2\right| \\
&= \lim_{r \rightarrow \infty} \frac{4\pi}{r} \log \left|\left|(P,gl)|_{\left(n_r\right)}\right|\right|,
\end{align*}
where equality holds since the limit exists, by Lemma \ref{techlem}, which is equal to the limit inferior and because $|C_r|$ grows at most polynomially. Applying Lemma \ref{techlem}, we obtain the lower bound
\begin{align*}
\liminf_{r \rightarrow \infty} \frac{2\pi}{r} \log \left|TV_r(M_L(k,l);q)\right| & \geq 2(k+2l)v_8.
\end{align*}
Therefore, we can conclude that 
\begin{align*}
\lim_{r \rightarrow \infty} \frac{2\pi}{r} \log \left|TV_r(M_L(k,l);q)\right| &= 2(k+2l)v_8 = v_3 ||M_L(k,l)||.
\end{align*}
\end{proof}


\section{Future Directions}\label{FurtherDirections}
By Theorem \ref{growthtrunctetra}, we remark that the quantum $6j$-symbols have a connection with the hyperbolic volumes of truncated tetrahedra \cite{growth6j, Costantino6j2007}. In particular, we use that when all of the sequences of colors grow as $\frac{r}{2}$, the quantum $6j$-symbols have asymptotics corresponding to the volume of an ideal hyperbolic octahedron. One can similarly consider different sequences of colors for the invariants of this family of manifolds. In these cases, the manifolds will not have a complete hyperbolic metric; however, by a conjecture of Wong and Yang \cite{WongYang}, it is expected that the asymptotics of the relative Reshetikhin--Turaev invariants for these sequences of colors still recover geometric properties of the truncated tetrahedra used in the construction. This conjecture has been studied by Wong and Yang for manifolds with complements homeomorphic to either the fundamental shadow links \cite{WongYang} or the figure-eight knot \cite{WongYang2}.   
In our future work, we will investigate this conjecture for the family of manifolds $\mathcal{M}$ in further depth.

\bibliographystyle{abbrv}

\bibliography{biblio}

\Addresses

\end{document}